\newcommand{\Ad}{\mathrm{Ad}}
\newcommand{\C}{\mathrm{C}}
\newcommand{\dab}{d(A,B)}
\newcommand{\w}{\mathrm{w}}
\newcommand{\id}{\mathrm{id}}
\newtheorem{theorem}{Theorem}
\newtheorem{thm}{Theorem}[section]
\newtheorem{prop}[thm]{Proposition}
\newtheorem{lemma}[thm]{Lemma}
\newtheorem{cor}[thm]{Corollary}
\newtheorem{defn}[thm]{Definition}
\newtheorem{exam}[thm]{Example}
\newtheorem{rem}[thm]{Remark}
\begin{document}
\allowdisplaybreaks[3] 

\title{Perturbations of crossed product C$^*$-algebras by amenable groups}

\author[S. Ino]{SHOJI INO}

\address{Department of Mathematical Sciences, Kyushu University, Motooka, Fukuoka, 819-0395, Japan.}

\email{s-ino@math.kyushu-u.ac.jp}

\keywords{C$^*$-algebras; Perturbations.} 
\subjclass[2010]{Primary~46L05,  Secondary~46L10}

\maketitle

\begin{abstract}
We study uniform perturbations of crossed product C$^*$-algebras by amenable groups. 
Given a unital  inclusion of C$^*$-algebras $C\subseteq D$
and sufficiently close separable intermediate C$^*$-subalgebras $A$, $B$ for this inclusion with a conditional expectation from $D$ onto $B$,
if $A=C\rtimes G$ with $G$ discrete amenable, then $A$ and $B$ are isomorphic.
Furthermore, if $C\subseteq D$ is irreducible, then $A=B$.
\end{abstract}

\section{introduction}
Kadison and Kastler started the study of perturbation theory of operator algebras with \cite{KK} in 1972.
They equipped the set of operator algebras on a fixed Hilbert space 
with a metric induced by Hausdorff distance between the unit balls.
Examples of close operator algebras are obtained by conjugating by a unitary near to the identity.
They conjectured sufficiently close operator algebras must be unitarily equivalent.
For injective von Neumann algebras, this conjecture was settled in \cite{Chris2, RT, Johnson1, Chris5}
with earlier special cases \cite{Chris1, P1}.
Cameron et al. \cite{CCSSWW} and Chan \cite{Chan} gave examples of non-injective von Neumann algebras satisfying the Kadison-Kastler conjecture.
In Christensen \cite{Chris3}, this conjecture was solved positively for von Neumann subalgebras of a common finite von Neumann algebra.

For C$^*$-algebras, the separable nuclear case was solved positively in Christensen et al. \cite{CSSWW}, 
building on the earlier special cases in \cite{Chris5, PR1, PR2, Khoshkam1}.
In full generality, there are examples of arbitrarily close non-separable nuclear C$^*$-algebras which are not $*$-isomorphic in Choi and Christensen \cite{CC}.
Johnson gave examples of arbitrarily close pairs of separable nuclear C$^*$-algebras which conjugate by unitaries where the implementing unitaries could not be chosen to be the identity in \cite{Johnson2}.

The author and Watatani \cite{IW} showed that for an inclusion of simple C$^*$-algebras $C\subseteq D$ with finite index in the sense of Watatani \cite{Watatani},
sufficiently close intermediate C$^*$-subalgebras are unitarily equivalent.
The implementing unitary can be chosen close to the identity and in the relative commutant algebra $C'\cap D$.
Our estimates depend on the inclusion $C\subseteq D$, since we use the finite basis for $C\subseteq D$.
Dickson  obtained uniform estimates independent of all inclusions in \cite{Dickson}.
To get this, Dickson showed that row metric is equivalent to the Kadison-Kastler metric.

The author \cite{I} showed that von Neumann subalgebras of a common von Neumann algebra with finite probabilistic index in the sense of Pimsner-Popa \cite{PP}
satisfy the Kadison-Kastler conjecture.
The implementing unitary can be chosen as being close to the identity.
Compared with the author and Watatani case \cite{IW},
we do not assume that von Neumann subalgebras have a common subalgebra with finite index.

In this paper, we study perturbations of crossed product C$^*$-algebras by discrete amenable groups.
We introduce crossed product-like inclusions of C$^*$-algebras in Definition \ref{amenable}.
For a unital inclusion of $\C^*$-algebras $A\subseteq B$,
we call $A\subseteq B$ is crossed product-like if there exists a discrete group $U$ in the normalizer $\mathcal{N}_B(A)$ of $A$ in $B$ such that
$A$ and $U$ generate $B$.
An example of crossed product-like inclusions is $A\subseteq A\rtimes G$, where $G$ is a discrete group.

Now suppose that we have a unital inclusion $C\subseteq D$ of $\C^*$-algebras and 
two close separable intermediate $\C^*$-subalgebras $A,B$ for  this inclusion.
If there is a conditional expectation $E\colon D\to B$,
then we get a map from $A$ into $B$ which is uniformly close to the identity map of $A$ by restricting $E$ to $A$.
Since $C$ is a subalgebra of $A\cap B$, $E|_A\colon A\to B$ is a $C$-fixed map, that is, $E|_A(c)=c$ for any $c\in C$.
Furthermore, if $C\subseteq A$ is crossed product-like by a discrete amenable group $U$ in $\mathcal{N}_A(C)$, then we can consider the point-norm averaging technique form \cite{CSSWW} by using the amenability of $U$.
To apply this technique to $E|_A$ we need that $E|_A$ is a $C$-fixed map.
Then in Lemma \ref{1.4}, we can obtain a $C$-fixed $(X,\varepsilon)$-approximate $*$-homomorphism from $A$ into $B$ for a finite subset $X$ in $A_1$ and $\varepsilon>0$.
To show this, we modify \cite[Lemma 3.2]{CSSWW} to $C$-fixed versions.
In Lemma \ref{1.5}, we obtain unitaries which conjugate these maps by modifying \cite[Lemma 3.4]{CSSWW} to $C$-fixed version.
The unitaries can be chosen in the relative commutant $C'\cap D$ of $C$ in $D$.
Therefore, if $C\subseteq D$ is irreducible, then the untiaries are scalars.
Then 
by these lemmas, we show our first main result: Theorem A, which is appeared in Theorem \ref{irreducible}.

\begin{theorem}
Let $C\subseteq D$ be a unital irreducible inclusion of $\C^*$-algebras 
acting on a separable Hilbert space $H$.
Let $A$ and $B$ be separable intermediate $\C^*$-subalgebras for $C\subseteq D$ with 
a conditional expectation from $D$ onto $B$.
Suppose that 
$C\subseteq A$ is crossed product-like by a discrete amenable group
and $\dab<140^{-1}$.
Then $A  = B$.
\end{theorem}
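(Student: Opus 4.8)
The plan is to run the point-norm perturbation scheme of \cite{CSSWW} in the $C$-fixed setting furnished by \lemref{1.4} and \lemref{1.5}, and then to collapse the construction using irreducibility. Set $\gamma:=\dab<140^{-1}$. The starting datum is the restriction $E|_A\colon A\to B$: since $C\subseteq A\cap B$, this is a unital completely positive $C$-fixed map, and for $a\in A_1$, choosing $b\in B_1$ with $\|a-b\|$ near $\gamma$ and writing $E|_A(a)-a=E(a-b)+(b-a)$ gives $\|E|_A(a)-a\|\le 2\gamma$. Thus $E|_A$ is a $C$-fixed map of $A$ into $B$ that is uniformly within $2\gamma$ of the inclusion, and the task is to straighten it to an honest $*$-homomorphism without moving $A$.

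Since $A$ is separable, fix an increasing sequence of finite sets $X_n\subseteq A_1$ with dense union and a rapidly decreasing sequence $\varepsilon_n\downarrow 0$. Feeding $E|_A$ into \lemref{1.4} --- applicable because $C\subseteq A$ is crossed product-like by a discrete amenable group, so the point-norm averaging over that group converges and preserves the $C$-fixing --- gives, for each $n$, a $C$-fixed $(X_n,\varepsilon_n)$-approximate $*$-homomorphism $\phi_n\colon A\to B$ with $\|\phi_n(a)-a\|$ bounded by a fixed multiple of $\gamma$ on $X_n$. Since $\gamma<140^{-1}$ supplies the smallness hypothesis of \lemref{1.5}, that lemma then produces unitaries $u_n\in C'\cap D$ with $\|u_n-1\|$ controllably small (in terms of $\gamma$ and $\varepsilon_{n-1}$) such that the conjugates $\Ad(u_n)\circ\phi_n$ fit together, via an intertwining argument as in \cite{CSSWW}, into a genuine $C$-fixed $*$-homomorphism $\psi\colon A\to B$ which is a $*$-isomorphism onto $B$ with $\|\psi(a)-a\|\le K\gamma<1$ for all $a\in A_1$; here $140^{-1}$ is exactly the threshold keeping the estimates from \lemref{1.4}, \lemref{1.5}, and the summability of $\|u_n-1\|$ mutually compatible.

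Now invoke irreducibility: $C'\cap D=\mathbb{C}1$, so each $u_n$ is a scalar and $\Ad(u_n)=\id_D$; hence no conjugation ever occurs and the intertwining delivers $\psi$ directly as the point-norm limit of the $\phi_n$, a $C$-fixed $*$-homomorphism of $A$ onto $B$ with $\|\psi(a)-a\|\le K\gamma$. Comparing $\psi$ with the inclusion $\id_A$ --- two $K\gamma$-close $C$-fixed $*$-homomorphisms of $A$ into $D$ --- the $C$-fixed implementing-unitary technology behind \lemref{1.5} provides a unitary $w\in C'\cap D$ with $w\psi(a)w^*=a$ for every $a$; by irreducibility $w$ is again a scalar, so $\psi=\id_A$, and therefore $B=\psi(A)=A$.

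The real difficulty is not this final collapse but the machinery supporting \lemref{1.4} and \lemref{1.5}: one must carry out the averaging and straightening arguments of \cite{CSSWW} while ensuring that (i) the approximate $*$-homomorphisms and all straightening unitaries fix $C$ pointwise; (ii) those unitaries can be chosen inside the relative commutant $C'\cap D$, not merely inside $D''$ --- which is exactly what legitimizes the irreducibility reduction, and which requires checking that each unitary produced is the polar part of an operator assembled from $C$-fixed data, hence commutes with $C$ and lies in $D$; and (iii) the numerical constants along the whole chain combine to the explicit bound $\dab<140^{-1}$. Step (ii) is the delicate point, step (iii) is careful bookkeeping, and once \lemref{1.4} and \lemref{1.5} are in hand the argument above is essentially a transcription of the \cite{CSSWW} scheme, with irreducibility supplying the extra rigidity that upgrades ``isomorphic'' to ``equal.''
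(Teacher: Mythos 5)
Your key insight --- that irreducibility forces the unitaries of Lemma~\ref{1.5} to be scalars, turning approximate conjugacy into approximate equality --- is exactly the mechanism the paper uses. But the route you take to deploy it has a genuine numerical gap. You first run the full intertwining scheme to build an honest $*$-isomorphism $\psi\colon A\to B$ before collapsing anything. That intertwining must compare \emph{consecutive} approximate morphisms $\phi_n,\phi_{n+1}$, each within $\eta=(8\sqrt2+2)\gamma$ of $\id_A$ and hence only within $2\eta$ of each other; Lemma~\ref{1.5} requires this distance to be below $1/10$, and $2(8\sqrt2+2)\gamma<1/10$ forces roughly $\gamma<1/267$ --- which is precisely why the paper's isomorphism theorem (Theorem~\ref{3.3}) assumes $d(A,B)<10^{-3}$, whereas the present statement only assumes $d(A,B)<140^{-1}$. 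So under the stated hypothesis your intertwining step does not go through, and your remark that $140^{-1}$ is "exactly the threshold" keeping the summability of $\|u_n-1\|$ compatible is not right: no summability of unitaries enters at all. Your closing step also overstates Lemma~\ref{1.5}: it produces, for each finite set and tolerance, a unitary achieving approximate conjugacy on that set, not a single unitary $w$ with $w\psi(a)w^*=a$ for \emph{every} $a$. (In the irreducible case this is harmless, since the unitary is scalar and one simply gets $\|\psi(a)-a\|\le\varepsilon$ on each finite set, but as written the claim is not what the lemma gives.)

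The paper's proof shows the detour is unnecessary. Fix $a\in A_1$ and $\varepsilon>0$. Lemma~\ref{1.5} (applied with target algebra $D$ and $X=\{a\}$) supplies a pair $(Y,\delta)$; Lemma~\ref{1.4} supplies one unital $C$-fixed $(Y,\delta)$-approximate $*$-homomorphism $\phi\colon A\to B$ with $\|\phi-\id_A\|\le(8\sqrt2+2)\gamma<1/10$ --- this single comparison is what the bound $140^{-1}$ is calibrated for. Comparing $\phi$ with $\id_A$ yields a unitary $u\in C'\cap D=\mathbb{C}I$, hence $\|\phi(a)-a\|\le\varepsilon$ with $\phi(a)\in B$. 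Letting $\varepsilon\to0$ gives $a\in B$, so $A\subseteq B$, and then $B\subset_1 A$ together with Proposition~\ref{surjective} yields $A=B$. No limit homomorphism, no surjectivity argument, and no convergence of unitaries is needed.
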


In Theorem B,
we show our second main result.
By an intertwining argument which is modified \cite[Lemma 4.1]{CSSWW} to $C$-fixed version, we show that $A$ is $*$-isomorphic to $B$.
The implementing surjective $*$-isomorphism can be chosen as $C$-fixed.
Theorem B is provided in Section \ref{isomorphism} as Theorem \ref{3.3}.

\begin{theorem}
Let $C\subseteq D$ be a unital inclusion of $\C^*$-algebras  and 
let $A$ and $B$ be separable intermediate $\mathrm{C}^*$-subalgebras for $C\subseteq D$ with a conditional expectation from $D$ onto $B$.
Suppose that $C\subseteq A$ is crossed product-like by a discrete amenable group
and $d(A,B)<10^{-3}$.
Then
there exists a $C$-fixed surjective $*$-isomorphism $\alpha \colon A\to B$.
\end{theorem}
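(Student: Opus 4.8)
The plan is to run the Elliott-type intertwining of \cite{CSSWW} in a form that keeps the subalgebra $C$ pointwise fixed throughout, with \lemref{1.4} and \lemref{1.5} as the two engines. First, restricting $E$ to $A$ gives a unital completely positive map $E|_A\colon A\to B$; it is $C$-fixed because $C\subseteq A\cap B$ and $E$ is idempotent onto $B$, and picking for $a\in A_1$ an element $b\in B_1$ with $\|a-b\|$ near $\dab$ and writing $E|_A(a)-a=E(a-b)-(a-b)$ shows $\|E|_A(a)-a\|\le 2\dab\,\|a\|$. Fix an increasing sequence $X_1\subseteq X_2\subseteq\cdots$ of finite subsets of $A_1$ with dense union (using separability of $A$) and a fast-decreasing sequence $\varepsilon_n>0$.

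The heart of the argument is the iteration, a $C$-fixed modification of \cite[Lemma~4.1]{CSSWW}. Inductively I would produce $C$-fixed approximate $*$-homomorphisms $\phi_n\colon A\to B$ and unitaries $w_n\in C'\cap D$. Let $\phi_1$ be the output of \lemref{1.4} on input $E|_A$: obtained from $E|_A$ by the F{\o}lner averaging over the amenable group witnessing that $C\subseteq A$ is crossed product-like, it is a $C$-fixed $(X_1,\varepsilon_1)$-approximate $*$-homomorphism staying within $\approx 2\dab$ of the inclusion $A\hookrightarrow B(H)$; set $w_1=1$. Given $\phi_n$, re-run \lemref{1.4} with $\phi_n$ in place of $E|_A$: because $\phi_n$ is already nearly multiplicative on $X_n$ the averaging moves it by only $O(\varepsilon_n)$ there, so it returns a $C$-fixed $(X_{n+1},\varepsilon_{n+1})$-approximate $*$-homomorphism $\phi_{n+1}\colon A\to B$ with $\|\phi_{n+1}(a)-\phi_n(a)\|\le O(\varepsilon_n)$ for $a\in X_n$ and $\|\phi_{n+1}(a)-a\|\le\|\phi_n(a)-a\|+O(\varepsilon_n)\|a\|$ for all $a$. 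Now \lemref{1.5} — the $C$-fixed form of \cite[Lemma~3.4]{CSSWW} — applied to the close pair $\phi_n,\phi_{n+1}$ gives a unitary $v_{n+1}\in C'\cap D$ with $\|v_{n+1}-1\|=O(\varepsilon_n)$ and $\|\Ad(v_{n+1})\phi_{n+1}(a)-\phi_n(a)\|\le\varepsilon_n$ for $a\in X_n$; put $w_{n+1}=w_nv_{n+1}$ and $\alpha_n=\Ad(w_n)\circ\phi_n$, so that $\alpha_{n+1}$ and $\alpha_n$ agree on $X_n$ to within $\varepsilon_n$.

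Since $\sum_n\|v_n-1\|<\infty$, the $w_n$ converge in norm to a unitary $w\in C'\cap D$; since $\sum_n\varepsilon_n<\infty$ and $\bigcup_nX_n$ is dense, $(\alpha_n)$ is point-norm Cauchy and converges to a $*$-homomorphism, whence $\phi_n=\Ad(w_n^*)\circ\alpha_n$ converges to a $*$-homomorphism $\phi\colon A\to B$ (each $\phi_n$ maps into the closed set $B$). As every $w_n$ lies in $C'\cap D$, each $\alpha_n$, and therefore $\phi$, is $C$-fixed. Telescoping the above bounds gives $\|\phi(a)-a\|\le\bigl(2\dab+O(\textstyle\sum_n\varepsilon_n)\bigr)\|a\|$. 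The step I expect to be the main obstacle is the quantitative bookkeeping here: one must control all the implied constants — in the re-based \lemref{1.4}, in \lemref{1.5}, and through the passage to the limit — tightly enough that, for $\dab<10^{-3}$, the $\varepsilon_n$ can be chosen so that $\|\phi(a)-a\|<\tfrac12\|a\|$ while every invocation of the two lemmas remains within its range of validity; this is exactly where the numerical hypothesis is spent.

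Granting this, bijectivity is soft. From $\|\phi(a)-a\|\le c\|a\|$ with $c<1$ we get $\|\phi(a)\|\ge(1-c)\|a\|$, so $\phi$ is injective and $\phi(A)$ is a C$^*$-subalgebra of $B$ (the image of a $*$-homomorphism of C$^*$-algebras is closed). For surjectivity, $\mathrm{dist}(a,\phi(A))\le c\|a\|$ for $a\in A$, and every $b\in B$ lies within $\dab\|b\|$ of $A$, so $\mathrm{dist}(b,\phi(A))\le c'\|b\|$ for every $b\in B$ with $c':=\dab+c(1+\dab)<1$; then, given $b\in B$, choose $b_1\in\phi(A)$ with $\|b-b_1\|\le c'\|b\|$, then $b_2\in\phi(A)$ with $\|(b-b_1)-b_2\|\le c'\|b-b_1\|$, and so on, so that $\sum_k b_k$ converges in $\phi(A)$ to $b$. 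Hence $\phi(A)=B$, and $\phi\colon A\to B$ is the desired $C$-fixed surjective $*$-isomorphism.
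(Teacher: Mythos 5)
Your overall architecture (a $C$-fixed Elliott-type intertwining driven by Lemmas \ref{1.4} and \ref{1.5}, followed by the soft injectivity/surjectivity argument) is the same as the paper's, but the inductive step contains a genuine gap. You produce $\phi_{n+1}$ by ``re-running Lemma \ref{1.4} with $\phi_n$ in place of $E|_A$'' and claim the averaging moves $\phi_n$ by only $O(\varepsilon_n)$ on $X_n$ because $\phi_n$ is already nearly multiplicative there. Lemma \ref{1.4} does not accept an arbitrary cpc map as input --- its proof Stinespring-dilates a genuine conditional expectation so that $P_H\in\pi(B)'$ --- and, more importantly, the amount by which its construction perturbs the input is governed by $d(A,B)$, not by the multiplicativity defect of the input on a finite set: one has $\|t-P_H\|\le 2\gamma$ because each F\o lner element $v$ is within $\gamma$ of $B_1$, hence $\|w-I\|\le 4\sqrt{2}\gamma$ and the output differs from the input by up to $2\|w-I\|=O(\gamma)$ uniformly. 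Nothing in the construction makes this perturbation shrink with $\varepsilon_n$. Consequently the honest bound for consecutive maps is $\|\phi_{n+1}-\phi_n\|\le 2\eta$ with $\eta=(8\sqrt{2}+2)\gamma$ a fixed constant; Lemma \ref{1.5} then only yields $\|v_{n+1}-I\|\le\sqrt{2}(2\eta+\delta_n)=O(\gamma)$, the series $\sum_n\|v_n-I\|$ diverges, and the norm convergence of $w_n$ --- which you rely on to transport the point-norm limit of $\alpha_n$ back into $B$ --- fails.

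The paper's proof (Lemma \ref{3.1} and Proposition \ref{3.2}) repairs exactly this point: each $\theta_n$ is generated afresh from $E$ by Lemma \ref{1.4}, so consecutive maps are merely $2\eta$-close in norm; Lemma \ref{1.5} is invoked with this fixed gap $2\eta$ (which is where $\gamma<10^{-3}$ is spent, to keep $2\eta<1/10$); and the unitaries $u_n$ it produces are not summably close to $I$ --- but they lie in $C'\cap B$, not merely $C'\cap D$. Hence $\alpha_n=\Ad(u_1\cdots u_n)\circ\theta_n$ already maps into $B$, condition (e) gives $\|\alpha_{n+1}(x)-\alpha_n(x)\|\le\gamma/2^n$ for $x\in X_n$, and the point-norm limit lands in $B$ with no need for the unitary product to converge. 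If you keep your $v_{n+1}$ in $C'\cap B$ (which Lemma \ref{1.5} provides when the codomain is $B$) and drop the unsupported requirement that consecutive $\phi_n$ be $O(\varepsilon_n)$-close before conjugation, your argument becomes the paper's.
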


In section \ref{von},
we consider crossed product-like inclusions of von Neumann algebras.
Given an inclusion $N\subseteq M$ of von Neumann algebras, we call $N\subseteq M$ is crossed product-like if there is a discrete group $U$ in $\mathcal{N}_M(N)$ such that $M$ is generated by $N$ and $U$.
For a  crossed product-like inclusion $A\subseteq B$ of $\C^*$-algebras
acting non-degenerately on $H$,
 an  inclusion $\overline{A}^{\w}\subseteq \overline{B}^{\w}$ of von Neumann algebras is crossed product-like.
In Theorem C,
we consider the perturbations of crossed product von Neumann algebras 
by discrete amenable groups.
This result is based on Christensen's work \cite{Chris2} and is appeared in Theorem \ref{2.3.5}.

\begin{theorem}
Let $N\subseteq M$ be an inclusion of von Neumann algebras in $\mathbb{B}(H)$ and 
let $A,B$ be intermediate von Neumann subalgebras for $N \subseteq M$
with a normal conditional expectation from $M$ onto $B$. 
Suppose that $N\subseteq A$ is crossed product-like by a discrete amenable group
and $d(A,B)<\gamma<10^{-2}$.
Then there exists a unitary $u\in N ' \cap (A\cup B)''$ 
such that $u A u^*= B$ 
and $\| u - I \| \le 2(8+\sqrt{2})\gamma$.
\end{theorem}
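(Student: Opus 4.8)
The plan is to restrict the given expectation to $A$, to read off from it a family of unitaries in $B$ which normalise $N$ and implement the same automorphisms of $N$ as the generating group, and then to use the amenability of that group to average these unitaries into an honest covariant representation of $A$ inside $B$; the unitary $u$ will then come from spatially implementing that representation. Concretely, let $E\colon M\to B$ be the normal conditional expectation and put $\phi:=E|_A\colon A\to B$. Since $N\subseteq B$ and $E$ is a $B$-bimodule projection, $\phi$ is a normal unital completely positive $N$-fixed map, $N$ lies in its multiplicative domain, and $\|\phi(a)-a\|\le 2d(A,B)\|a\|<2\gamma\|a\|$ for all $a\in A$ (approximate $a\in A_1$ by an element of $B_1$ and apply $E$). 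Write $U\subseteq\mathcal U(A)\cap\mathcal N_A(N)$ for the discrete amenable group with $(N\cup U)''=A$ and put $\theta_u:=\Ad(u)|_N\in\mathrm{Aut}(N)$.

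Next I would extract normalising unitaries in $B$. For $u\in U$ the element $\phi(u)=E(u)$ is within $2\gamma$ of the unitary $u$, hence invertible, so I let $\tilde E(u)\in\mathcal U(B)$ be the unitary part of its polar decomposition. Using that $E$ is $B$-bimodular and $N$-fixed together with $unu^*\in N$ one gets $E(u)\,\theta_{u^{-1}}(n)=n\,E(u)$ for all $n\in N$; taking adjoints and combining the two identities shows $E(u)^*E(u)\in N'$, hence $|E(u)|\in N'$, and therefore $\tilde E(u)\,n\,\tilde E(u)^*=\theta_u(n)$ for every $n\in N$. Thus each $\tilde E(u)$ normalises $N$ in $M$ and implements $\theta_u$; moreover $\|\tilde E(u)-u\|\le c_1\gamma$ and a short estimate gives $\|\tilde E(uw)-\tilde E(u)\tilde E(w)\|\le c_2\gamma$, the defect unitary lying in $N'$ because both sides implement $\theta_{uw}\theta_w^{-1}\theta_u^{-1}=\id_N$. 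Equivalently, $u\mapsto \tilde E(u)u^*$ is an almost $1$-cocycle, with values within $c_1\gamma$ of $I$, for the action $\Ad$ of $U$ on the von Neumann algebra $N'\cap(A\cup B)''$.

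The heart of the argument, following Christensen \cite{Chris2}, is the amenable averaging. Working inside $N'\cap(A\cup B)''$ and using an invariant mean $\mu$ on $U$ together with weak-$*$ compactness of its unit ball, form $a:=\mu_u\big(\tilde E(u)u^*\big)$. Invariance of $\mu$ and the approximate cocycle identity give $a=\tilde E(v)v^*\,\Ad(v)(a)$ up to an error of norm $\le c_2\gamma$; since $\|a-I\|\le c_1\gamma<1$, $a$ is invertible, and a polar-decomposition argument produces a unitary $w\in N'\cap(A\cup B)''$ with $\|w-I\|\le c_3\gamma$ that untwists the cocycle, so that $\Ad(w)$ carries $A=(N\cup U)''$ onto $wAw^*=(N\cup wUw^*)''$, a von Neumann subalgebra at distance $O(\gamma)$ from $(N\cup\tilde E(U))''\subseteq B$. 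A successive-approximation loop in the spirit of \cite{Chris2} — replace $A$ by $wAw^*$, restrict $E$ again, and re-average, each stage only correcting the (genuinely smaller) residual cocycle — drives this residual distance to zero; the unitaries produced at the successive stages are close to $I$ with summable defects, so their product converges to a unitary $u\in N'\cap(A\cup B)''$ with $uAu^*\subseteq B$ and, after the estimates of the previous step are tracked and the resulting geometric series is summed, $\|u-I\|\le 2(8+\sqrt 2)\gamma$.

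Finally, $uAu^*$ and $B$ are von Neumann subalgebras of $M$ with $uAu^*\subseteq B$ and $d(uAu^*,B)\le 2\|u-I\|+d(A,B)<1$ (using $\gamma<10^{-2}$), so the standard fact that a containment of sufficiently close von Neumann algebras is an equality gives $uAu^*=B$. The one real obstacle is the averaging step of the third paragraph: one must transport Christensen's argument from his ``injective subalgebra'' hypothesis to the present ``relative, amenable-group'' hypothesis while never leaving $N'\cap(A\cup B)''$, so that $N$-fixedness is preserved throughout and the implementing unitary lands in the relative commutant, and one must control the successive-approximation loop carefully enough to end up inside $B$ rather than merely near it; pushing the constant down to $2(8+\sqrt 2)$ is then a matter of bookkeeping in the estimates of the second and third steps.
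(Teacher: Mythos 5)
Your opening reductions (restricting $E$ to $A$, the $N$-bimodularity of $E|_A$, the fact that $|E(u)|\in N'$ so that the polar part $\tilde E(u)\in B$ normalises $N$ and implements $\theta_u$, and the closing step $uAu^*\subseteq B$, $d(uAu^*,B)<1\Rightarrow uAu^*=B$) are all sound and the last of these is exactly how the paper finishes. But the heart of your argument has a genuine gap, in two places. First, averaging the approximate cocycle $c(u)=\tilde E(u)u^*$ against an invariant mean only yields the \emph{approximate} intertwining relation $av\approx\tilde E(v)a$ with an error of order $\gamma$: the defect $\|\tilde E(uv)-\tilde E(u)\tilde E(v)\|=O(\gamma)$ is a fixed quantity that the mean does not shrink, so after untwisting you only know $\|wvw^*-\tilde E(v)\|=O(\gamma)$ --- which is no better than the relation $\|v-E(v)\|<2\gamma$ you started from. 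Consequently the ``successive-approximation loop'' has no contraction mechanism: nothing in your construction makes the residual cocycle ``genuinely smaller'' at the next stage, so the infinite product of correcting unitaries has no reason to converge, let alone with summable defects. Second, even granting control on the generators, the claim that $wAw^*=(N\cup wUw^*)''$ lies at distance $O(\gamma)$ from $(N\cup\tilde E(U))''$ does not follow: generator-wise closeness does not bound the Kadison--Kastler distance between the generated von Neumann algebras, since sums of products of generators amplify the error without bound. Relatedly, you never actually produce a homomorphism defined on all of $A$; $n u\mapsto n\tilde E(u)$ is not even well defined on $(N\cup U)''$ when $\tilde E$ is only approximately multiplicative.

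The paper's proof avoids both problems by exploiting the \emph{exact} invariance of the mean at two points where you only get approximate statements. In Lemma~\ref{2.2} it dilates $E$ via Stinespring to $\pi$ with $E(\cdot)=P_H\pi(\cdot)|_H$ and averages $\pi(u)P_H\pi(u^*)$ over $U$; invariance of the mean gives $t\in\pi(A)'$ \emph{exactly}, hence the spectral projection $q$ of $t$ commutes exactly with $\pi(A)$, and $\Phi(x)=P_Hw^*\pi(x)w|_H$ is an honest unital $N$-fixed normal $*$-homomorphism $A\to B$ with $\|\Phi-\id_A\|\le(8\sqrt2+2)\gamma$ --- all in one step, no iteration. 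Then Lemma~\ref{2.3} averages $\Phi_1(u)\Phi_2(u^*)$ for two exact homomorphisms $\Phi_1=\Phi$, $\Phi_2=\id_A$, and exact invariance again gives the \emph{exact} relation $\Phi_1(x)s=s\Phi_2(x)$, whence the polar unitary $u$ of $s$ satisfies $\Phi=\Ad(u)$ with $\|u-I\|\le\sqrt2\,\|\Phi-\id_A\|\le2(8+\sqrt2)\gamma$. If you want to salvage your route, the fix is to average objects for which the relevant identity is exact (the projections $\pi(u)P_H\pi(u^*)$, or products of two genuine homomorphisms evaluated on group elements) rather than the approximately multiplicative family $\tilde E(u)$.
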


By the theorem above,
we can consider the perturbations of the second dual C$^*$-algebras of crossed product algebras by amenable groups in Corollary \ref{2.4}.
Given a unital inclusion $C\subseteq D$ of $\C^*$-algebras 
and sufficiently close intermediate $\C^*$-subalgebras $A,B$ for this inclusion,
if $C\subseteq A$ is a crossed product-like inclusion by a discrete amenable group and there is a conditional expectation $E\colon D\to B$,
then $A^{**}$ and $B^{**}$ are unitarily equivalent.
To show this, we use a normal conditional expectation $E^{**}\colon D^{**}\to B^{**}$ and 
identify $A^{**},B^{**},C^{**}$ and $D^{**}$ with $\pi(A)'',\pi(B)'',\pi(C)''$ and $\pi(D)''$,
respectively, where $\pi$ is the universal representation of $D$.

In Proposition \ref{4.3},
we obtain a unitary such that the unitary implement a $*$-isomorphism under the assumption 
$C'\cap C^*(A,B)\subseteq \overline{C'\cap A}^{\w}$.
To show Proposition \ref{4.3}
we prepare Lemma \ref{4.1} and \ref{4.2} by using Lemma \ref{1.5} and \ref{1.8} and Theorem \ref{3.3}.
Combining Proposition \ref{4.3} with Corollary \ref{2.4} gives Theorem D, which is appeared in Theorem \ref{main}.
To show this, we modify the arguments of Section 5 in Christensen et al. \cite{CSSWW}.

\begin{theorem}
Let $C\subseteq D$ be a unital inclusion of $\mathrm{C}^*$-algebras 
acting on a separable Hilbert space $H$.
Let $A$ and $B$ be separable intermediate $\C^*$-subalgebras for $C\subseteq D$ with 
a conditional expectation $E\colon D\to B$.
Suppose that $C\subseteq A$ is crossed product-like by a discrete amenable group
and $C'\cap A$ is weakly dense in $C'\cap \overline{A}^{\w}$.
If $\dab<10^{-7}$,
then there exists a unitary $u\in C'\cap (A\cup B)''$ such that $u A u^* = B$.
\end{theorem}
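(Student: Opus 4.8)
The plan is to combine the $C$-fixed isomorphism from Theorem~\ref{3.3} (Theorem B) with the second-dual unitary from Corollary~\ref{2.4} and then descend to the C$^*$-level using the density hypothesis $C'\cap A$ weakly dense in $C'\cap\overline{A}^{\w}$, exactly following the pattern of Section~5 of \cite{CSSWW}. First I would apply Theorem~\ref{3.3}: since $\dab<10^{-7}<10^{-3}$, there is a $C$-fixed surjective $*$-isomorphism $\alpha\colon A\to B$. Separately, applying Corollary~\ref{2.4} (which in turn feeds $E^{**}\colon D^{**}\to B^{**}$ into Theorem~\ref{2.3.5}, noting $\dab<10^{-7}<10^{-2}$ is more than enough) produces a unitary $w\in C^{**\prime}\cap(A^{**}\cup B^{**})''$ with $wA^{**}w^*=B^{**}$. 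Identifying everything inside $\pi(D)''$ for $\pi$ the universal representation of $D$, we may regard $w$ as a unitary in $\pi(C)'\cap(\pi(A)\cup\pi(B))''$.

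The next step is to manufacture, from $\alpha$ and $w$, a single unitary $u$ implementing $\alpha$. Consider the $*$-isomorphism $\mathrm{Ad}(w)|_{A^{**}}$ and the normal extension $\alpha^{**}\colon A^{**}\to B^{**}$ of $\alpha$; both carry $A^{**}$ onto $B^{**}$ and both fix $C^{**}$ pointwise, so $\theta:=(\alpha^{**})^{-1}\circ\mathrm{Ad}(w)$ is a $C^{**}$-fixed automorphism of $A^{**}$ that is close to the identity (here I would invoke Lemma~\ref{1.8} and the $C$-fixed version of the Christensen intertwining, Lemma~\ref{1.5}, to get the quantitative closeness, since $\alpha$ was built to be uniformly close to $E|_A$ and $\|w-I\|$ is controlled by $2(8+\sqrt2)\gamma$). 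A sufficiently small such automorphism of a von Neumann algebra is inner via a unitary in $C^{**\prime}\cap A^{**}$; composing, we get a unitary $v\in\pi(C)'\cap(\pi(A)\cup\pi(B))''$ with $\mathrm{Ad}(v)|_{A^{**}}=\alpha^{**}$. In particular $vAv^*=\alpha(A)=B$ as C$^*$-algebras sitting inside $\pi(D)''$.

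Now comes the descent, which is where the density hypothesis enters and which I expect to be the main obstacle. The unitary $v$ a priori lives only in the bidual $\pi(C)'\cap(\pi(A)\cup\pi(B))''=C'\cap C^*(A,B)^{**}$, not in $C'\cap C^*(A,B)$ itself, so $vAv^*=B$ is not yet a statement inside $\mathbb B(H)$. The mechanism of Proposition~\ref{4.3} is precisely designed for this: under the hypothesis $C'\cap C^*(A,B)\subseteq\overline{C'\cap A}^{\w}$ one can replace the bidual unitary by one in the ambient algebra. The point is that $v$ normalizes $A$ inside its weak closure while fixing $C$, and $C'\cap A$ being weakly dense in $C'\cap\overline A^{\w}$ lets one approximate $v$ in the strong-$*$ topology by elements of $C'\cap A$ and then, by a standard perturbation-of-unitaries argument (polar decomposition plus the fact that $A$ is a C$^*$-algebra), produce an \emph{honest} unitary $u\in C'\cap(A\cup B)''$ with $uAu^*=B$. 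I would verify that the hypothesis of Proposition~\ref{4.3} is met: we are told $C'\cap A$ is weakly dense in $C'\cap\overline{A}^{\w}$, and for $\dab$ small enough one shows $\overline{C^*(A,B)}^{\w}=\overline A^{\w}$ (close C$^*$-algebras have equal weak closures when one is large enough, cf. the Kadison--Kastler machinery), so $C'\cap C^*(A,B)\subseteq C'\cap\overline A^{\w}=\overline{C'\cap A}^{\w}$.

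The bookkeeping of constants is the only genuinely delicate routine part: the $10^{-7}$ threshold must simultaneously clear the $10^{-3}$ of Theorem~\ref{3.3}, the $10^{-2}$ of Theorem~\ref{2.3.5}/Corollary~\ref{2.4}, the smallness needed for the automorphism $\theta$ to be inner, and the perturbation estimate in Proposition~\ref{4.3}; I would assemble these in the order (B)$\to$(Cor.~\ref{2.4})$\to$(Lem.~\ref{4.1}, \ref{4.2})$\to$(Prop.~\ref{4.3}) and take the worst constant. The conceptual content is entirely in Theorem B, Corollary~\ref{2.4}, and Proposition~\ref{4.3}; the present theorem is their conjunction once one checks the density hypothesis propagates and the numerics line up.
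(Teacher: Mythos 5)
Your overall architecture (Theorem~\ref{3.3} $+$ Corollary~\ref{2.4} $+$ Proposition~\ref{4.3}) points at the right ingredients, but the step on which everything hinges --- verifying the hypothesis of Proposition~\ref{4.3} --- is wrong as you state it. You claim that for $\dab$ small one has $\overline{C^*(A,B)}^{\w}=\overline{A}^{\w}$ because ``close C$^*$-algebras have equal weak closures.'' That is false: Lemma~\ref{weak-closure} only gives $d(\overline{A}^{\w},\overline{B}^{\w})\le \dab$, and close von Neumann algebras need not coincide (take $B=vAv^*$ for a unitary $v$ near $I$ with $v\notin\overline{A}^{\w}$; then $\overline{B}^{\w}=v\overline{A}^{\w}v^*\ne\overline{A}^{\w}$ in general --- indeed if closeness forced equality of weak closures, most of this perturbation theory would be vacuous). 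Without $B\subseteq\overline{A}^{\w}$ you cannot get $C'\cap C^*(A,B)\subseteq\overline{C'\cap A}^{\w}$, so Proposition~\ref{4.3} does not apply to the pair $(A,B)$. This is exactly what the bidual detour in the paper is for: Corollary~\ref{2.4} yields $u_0\in (C^{**})'\cap W^*(A^{**},B^{**})$ with $u_0A^{**}u_0^*=B^{**}$ and $\|u_0-I\|\le 19\gamma$; pushing down through the normal representation $\pi\colon D^{**}\to D''$ on $K=e_DH$ and setting $\tilde{A}:=\pi(u_0)A\pi(u_0)^*$, one gets $\tilde{A}''=B''$ and $d(\tilde{A},B)<39\gamma<10^{-5}$, and \emph{only for this conjugated pair} does the density hypothesis give $C'\cap C^*(\tilde{A},B)\subseteq C'\cap\tilde{A}''=\overline{C'\cap\tilde{A}}^{\w}$. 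Proposition~\ref{4.3} is then applied to $(\tilde{A},B)$, and the final unitary is assembled as $u_1\pi(u_0)+(I_K-e_D)$. So the role of Corollary~\ref{2.4} is not to hand you a unitary to ``descend''; it is to replace $A$ by a conjugate whose weak closure equals $B''$ so that the hypothesis of Proposition~\ref{4.3} becomes checkable.

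A secondary problem: the middle portion of your argument --- forming $\theta=(\alpha^{**})^{-1}\circ\Ad(w)$, asserting it is close to the identity, and deducing innerness --- is both unnecessary (Proposition~\ref{4.3} already produces a unitary $u\in C'\cap(A\cup B)''$ with $uAu^*=B$; you do not need a separate bidual unitary implementing $\alpha$) and unjustified. Theorem~\ref{3.3} controls $\alpha$ only in the point--norm sense $\alpha\approx_{X,15\gamma}\id_A$ on a prescribed finite set; it does \emph{not} give $\|\alpha-\id_A\|\le 15\gamma$ in operator norm, because the intertwining unitaries $u_1\cdots u_n$ in Proposition~\ref{3.2} accumulate and the product drifts away from $I$. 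Hence $\|\alpha^{**}-\Ad(w)\|<1$ is not available and Lemma~\ref{2.3} cannot be invoked to make your $\theta$ inner. If you drop that detour and instead insert the conjugation-by-$u_0$ step before invoking Proposition~\ref{4.3}, your outline becomes the paper's proof.
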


%%%%%%%%%%%%%%%%%%%%%%%%%%%%%%%%%%%%%%%%%

\section{Preliminaries}

%%%%%%%%%%%%%%%%%%%%%%%%%%%%%%%%%%%%%%%%%

Given a C$^*$-algebra $A$, we denote by $A_1$ and $A^u$ the unit ball of $A$ and the unitaries in $A$, respectively.
We recall Kadison and Kastler's metric on the set of all C$^*$-subalgebras of a C$^*$-algebra from \cite{KK}.

\begin{defn}\upshape
Let $A$ and $B$ be C$^*$-subalgebras of a C$^*$-algebra $C$.
Then we define a metric between $A$ and $B$ by
\[
d(A,B):= \max \left\{ \sup_{a\in A_1} \inf_{b\in B_1} \| a - b\| , \ \sup_{b\in B_1}\inf_{a\in A_1} \| a-b\|     \right\}.
\]
\end{defn}

In the  definition above,
$d(A,B)<\gamma$ if and only if for any $x$ in either $A_1$ or $B_1$,
there exists $y$ in other one such that $\| x - y\| <\gamma$.

\begin{exam}\upshape
Let $A$ be a C$^*$-algebra in $\mathbb{B}(H)$ and $u$ be a unitary in $\mathbb{B}(H)$.
Then $d(A, u A u^*)\le 2\| u-I_H\|$.
\end{exam}

Near inclusions of $\C^*$-algebras are defined by Christensen in \cite{Chris5}.

\begin{defn}\upshape
Let $A$ and $B$ be C$^*$-subalgebras of a C$^*$-algebra $C$ and let $\gamma>0$.
We write $A\subseteq_{\gamma}B$ if for any $x\in A_1$ there exists $y\in B$ such that $\| x-y\| \le \gamma$.
If there is $\gamma'<\gamma$ with $A\subseteq_{\gamma'}B$, then we write $A\subset_{\gamma}B$.
\end{defn}

The next two proposition is folklore.
The second can be found as \cite[Proposition 2.10]{CSSWW}.

\begin{prop}\label{surjective}
Let $A$ and $B$ be $\C ^*$-algebras with $A\subseteq B$.
If $B\subset_1 A$, then $A=B$.
\end{prop}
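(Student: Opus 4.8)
The plan is to deduce $A=B$ from the single inclusion $B\subseteq A$, for which, since $A$ is a C$^*$-algebra and hence norm-closed in $B$, it suffices to show that every $x$ in the unit ball $B_1$ lies in $A$. By the definition of $B\subset_1 A$ we may fix $\gamma'<1$ with $B\subseteq_{\gamma'}A$; the case $\gamma'=0$ gives $B_1\subseteq A$ at once, so assume $0<\gamma'<1$. The heart of the matter is then an iterated approximation that manufactures a rapidly converging ``geometric'' series inside $A$ converging to $x$.

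Concretely, given $x\in B_1$, I would first choose $a_1\in A$ with $\|x-a_1\|\le\gamma'$. Then $(x-a_1)/\gamma'$ again belongs to $B_1$, so the near-inclusion supplies $b\in A$ with $\|(x-a_1)/\gamma'-b\|\le\gamma'$; putting $a_2=\gamma' b\in A$ yields $\|x-(a_1+a_2)\|\le\gamma'^2$. Iterating this rescaling argument, one constructs inductively elements $a_n\in A$ with $\bigl\|x-\sum_{k=1}^{n}a_k\bigr\|\le\gamma'^{\,n}$ for every $n$. Writing $s_n=\sum_{k=1}^{n}a_k\in A$, one has $\|s_{n+1}-s_n\|=\|a_{n+1}\|\le\|x-s_{n+1}\|+\|x-s_n\|\le\gamma'^{\,n+1}+\gamma'^{\,n}$, so $(s_n)$ is a norm-Cauchy sequence in $A$; since $\gamma'<1$ it converges, necessarily to $x$, and closedness of $A$ forces $x\in A$. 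Thus $B_1\subseteq A$, hence $B\subseteq A$, and combined with $A\subseteq B$ we get $A=B$.

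I do not anticipate a genuine obstacle here, as the statement is folklore; the only points demanding a little care are checking that each rescaled remainder $(x-s_n)/\gamma'^{\,n}$ indeed lies in $B_1$ (using that $x-s_n\in B$ with norm at most $\gamma'^{\,n}$) so that $B\subseteq_{\gamma'}A$ can be invoked at every stage, together with the separate, trivial treatment of the degenerate case $\gamma'=0$.
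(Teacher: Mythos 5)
Your proof is correct, and it is the standard geometric-series argument that this folklore statement rests on; the paper itself omits the proof entirely (it only remarks that the proposition is folklore), so there is nothing to diverge from. The two points you flag — that $x-s_n$ lies in $B$ with norm at most $\gamma'^{\,n}$ so the near-inclusion can be reapplied after rescaling, and the trivial case $\gamma'=0$ — are indeed the only places requiring care, and you handle both.
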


\begin{prop}
Let $A$ and $B$ be $\C^*$-subalgebras of a $\C^*$-algebra $C$.
If $B\subset_{1/2} A$ and $A$ is separable,
then $B$ is separable.
\end{prop}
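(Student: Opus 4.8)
The plan is to produce a separable C$^*$-subalgebra $B_0\subseteq B$ for which $B\subset_1 B_0$, and then conclude $B=B_0$ from \propref{surjective}. Since $B\subset_{1/2}A$, fix $\gamma'<1/2$ with $B\subseteq_{\gamma'}A$, and, using that $A$ is separable, fix a countable dense subset $\{a_n:n\in\mathbb{N}\}$ of $A$. Because $\gamma'<1/2$ we may choose $\rho>0$ with $2\gamma'+5\rho<1$. For each $n$ with $\mathrm{dist}(a_n,B_1)<1/2$, pick one $c_n\in B_1$ with $\|a_n-c_n\|<\mathrm{dist}(a_n,B_1)+\rho$ (possible by definition of the infimum), and let $B_0$ be the C$^*$-subalgebra of $B$ generated by this countable family of $c_n$'s; then $B_0\subseteq B$ and $B_0$ is separable, being generated by a countable set.

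Next I would verify the near inclusion $B\subseteq_{\gamma'''}B_0$ with $\gamma''':=2\gamma'+5\rho<1$. Given $x\in B_1$, we have $\mathrm{dist}(x,A)\le\gamma'$, so there is $a\in A$ with $\|x-a\|<\gamma'+\rho$, and then an $n$ with $\|a-a_n\|<\rho$, whence $\|x-a_n\|<\gamma'+2\rho<1/2$. In particular $\mathrm{dist}(a_n,B_1)\le\|a_n-x\|<1/2$, so $c_n$ is defined and $\|a_n-c_n\|<\mathrm{dist}(a_n,B_1)+\rho\le\|a_n-x\|+\rho<\gamma'+3\rho$; therefore
\[
\|x-c_n\|\le\|x-a_n\|+\|a_n-c_n\|<(\gamma'+2\rho)+(\gamma'+3\rho)=2\gamma'+5\rho=\gamma'''<1.
\]
Since $c_n\in B_0$, this shows $B\subseteq_{\gamma'''}B_0\subseteq B$, hence $B\subset_1 B_0$, and \propref{surjective} gives $B=B_0$; in particular $B$ is separable.

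The only delicate point, and the one place where the hypothesis $\gamma'<1/2$ is used, is obtaining a \emph{uniform} constant $\gamma'''<1$ for $B\subseteq_{\gamma'''}B_0$ rather than merely the pointwise estimate $\|x-c_n\|<1$ (whose supremum over $x$ could be $1$); this forces one to keep each of the two triangle-inequality errors $\|x-a_n\|$ and $\|a_n-c_n\|$ strictly below $1/2$, which the choice of $\rho$ together with $\gamma'<1/2$ exactly secures. One could instead argue by contradiction from an uncountable $1$-separated subset of $B_1$, mapping it into $A$ by a near-inclusion selection to contradict separability of $A$; but that would additionally need the C$^*$-fact that every non-separable C$^*$-algebra contains such a family in its unit ball, and routing through \propref{surjective} sidesteps this.
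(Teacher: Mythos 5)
Your argument is correct: the selection of the countable family $\{c_n\}\subseteq B_1$, the uniform bound $2\gamma'+5\rho<1$ for $B\subseteq_{2\gamma'+5\rho}B_0$, and the appeal to \propref{surjective} all check out, and the case split on whether $\mathrm{dist}(a_n,B_1)<1/2$ is handled properly. The paper itself offers no proof — it labels the statement folklore and cites \cite[Proposition 2.10]{CSSWW} — and your argument is precisely the standard one behind that reference, so there is nothing to compare beyond noting agreement.
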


The following lemma appears in \cite[Lemma 5]{KK}.

\begin{lemma}\label{weak-closure}
Let $A$ and $B$ be $\C^*$-subalgebras acting on a Hilbert space $H$.
Then $d(\overline{A}^{\w},\overline{B}^{\w})\le \dab$.
\end{lemma}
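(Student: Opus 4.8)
The plan is to pass from the norm-distance estimate on the unit balls $A_1$, $B_1$ to the unit balls of the weak closures by combining the Kaplansky density theorem with the weak compactness of operator unit balls. Write $\gamma := \dab$ and set $M := \overline{A}^{\w}$, $N := \overline{B}^{\w}$. Since the metric $d$ is symmetric in its two arguments, it suffices to show that every $x \in M_1$ satisfies $\inf_{y \in N_1}\| x - y\| \le \gamma$; the reverse estimate, with the roles of $A$ and $B$ interchanged, is proved identically.

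First I would fix $x \in M_1$ and $\varepsilon > 0$. By the Kaplansky density theorem, $A_1$ is dense in $M_1$ in the strong operator topology, hence also in the weak operator topology, so there is a net $(a_i)$ in $A_1$ converging weakly to $x$. Because $\dab = \gamma$, for each index $i$ I may choose $b_i \in B_1$ with $\| a_i - b_i\| \le \gamma + \varepsilon$, since $\inf_{b\in B_1}\| a_i - b\| \le \gamma$.

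Now I would invoke weak compactness. The unit ball of $\mathbb{B}(H)$ is weakly compact by the Banach--Alaoglu theorem, and $N_1$ is a weakly closed subset of it, hence weakly compact. Thus $(b_i)$ admits a subnet converging weakly to some $y \in N_1$; passing to this subnet, $a_i \to x$ and $b_i \to y$ weakly, so $a_i - b_i \to x - y$ weakly. The operator norm is the supremum of the weakly continuous functions $T \mapsto |\langle T\xi,\eta\rangle|$ over $\xi,\eta$ in the unit ball of $H$, hence is weakly lower semicontinuous; therefore $\| x - y\| \le \liminf_i \| a_i - b_i\| \le \gamma + \varepsilon$. This gives $\inf_{y\in N_1}\| x - y\| \le \gamma + \varepsilon$, and letting $\varepsilon \to 0$ yields $\inf_{y\in N_1}\| x - y\| \le \gamma$, as required.

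The main obstacle is conceptual rather than computational: the Kadison--Kastler metric measures \emph{norm} distance, whereas the only approximation of elements of $M$ by elements of $A$ available from Kaplansky's theorem is in the weak topology. The device that bridges this gap is the weak lower semicontinuity of the operator norm, used together with the weak compactness of $N_1$ to extract an honest element $y \in N_1$ as a weak limit of the norm-close approximants $b_i$, and then to transfer the uniform bound $\|a_i-b_i\|\le\gamma+\varepsilon$ to the limit. Once these two standard facts are in place, the estimate follows with no further work.
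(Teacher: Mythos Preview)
Your argument is correct. The paper does not supply its own proof of this lemma but simply cites \cite[Lemma~5]{KK}; the argument you give---Kaplansky density, weak-operator compactness of the unit ball, and lower semicontinuity of the operator norm---is the standard one.
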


The lemma below shows some standard estimates.

\begin{lemma}\label{polar}
Let $A$ be a unital $\C^*$-algebra.
\begin{enumerate}
\item
Given $x\in A$ with $\| I-x \|<1$, let $u\in A$ be the unitary in the polar decomposition $x=u|x|$.
Then,
\[
\| I-u\| \le \sqrt{2}\| I-x \|.
\]

\item
Let $p\in A$  be a projection and $a\in A$ a self-adjoint operator.
Suppose that $\delta:=\| a-p\| <1/2$.
Then $q:=\chi_{[1-\delta,1+\delta]}(a)$ is a projection in $C^*(a,I)$ satisfying 
\[
\| q-p\|\le 2\| a-p\| <1.
\]

\item
Let $p,q\in A$ be projections with $\| p-q\| <1$.
Then there exists a unitary $w\in A$ such that
\[
w p w^* =q \ \ \text{and} \ \ \| I-w \| \le \sqrt{2}\| p-q\|.
\]
\end{enumerate}
\end{lemma}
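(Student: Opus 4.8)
The plan is to treat part (1) as the crux, to deduce part (3) from it by a standard projection--interpolation trick, and to obtain part (2) by a direct computation with the continuous functional calculus.

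For (1), set $\delta:=\|I-x\|$. Since $\delta<1$, both $x$ and $x^{*}$ are invertible, so the partial isometry in the polar decomposition is a genuine unitary $u$; moreover one has the two factorisations $x=u|x|$ and $x=|x^{*}|\,u$ with the \emph{same} unitary $u$ (because $|x^{*}|=u|x|u^{*}$), and $|x^{*}|=(xx^{*})^{1/2}$ is invertible. From $(I-x)(I-x)^{*}\le\|I-x\|^{2}I$ we get $x+x^{*}\ge xx^{*}+(1-\delta^{2})I=|x^{*}|^{2}+(1-\delta^{2})I$; substituting $x=|x^{*}|u$ this becomes
\[
|x^{*}|\,u+u^{*}|x^{*}| \;\ge\; |x^{*}|^{2}+(1-\delta^{2})\,I \;\ge\; 2\sqrt{1-\delta^{2}}\;|x^{*}|,
\]
the last step being $(|x^{*}|-\sqrt{1-\delta^{2}}\,I)^{2}\ge 0$. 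Conjugating by $|x^{*}|^{-1/2}$ shows that $w:=|x^{*}|^{1/2}u\,|x^{*}|^{-1/2}$ satisfies $w+w^{*}\ge 2\sqrt{1-\delta^{2}}\,I$. Now $w$ is similar to $u$, so $\sigma(w)=\sigma(u)$, while $w+w^{*}\ge 2\sqrt{1-\delta^{2}}\,I$ forces the numerical range of $w$, hence $\sigma(w)$, into $\{\lambda:\operatorname{Re}\lambda\ge\sqrt{1-\delta^{2}}\}$. Since $u$ is unitary, hence normal, $\operatorname{Re}(u)=\frac12(u+u^{*})$ has spectrum $\{\operatorname{Re}\lambda:\lambda\in\sigma(u)\}\subseteq[\sqrt{1-\delta^{2}},1]$, so $u+u^{*}\ge 2\sqrt{1-\delta^{2}}\,I\ge 2(1-\delta^{2})I$ (using $\sqrt t\ge t$ for $t\in[0,1]$). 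Therefore $\|I-u\|^{2}=\|(I-u)^{*}(I-u)\|=\|2I-u-u^{*}\|\le 2\delta^{2}$, which is (1).

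For (2): as $a$ and $p$ are self-adjoint, $\sigma(a)\subseteq\{t:\operatorname{dist}(t,\sigma(p))\le\delta\}\subseteq[-\delta,\delta]\cup[1-\delta,1+\delta]$, and these two intervals are disjoint exactly because $\delta<1/2$; hence $\chi_{[1-\delta,1+\delta]}$ agrees on $\sigma(a)$ with a continuous $\{0,1\}$-valued function, so $q=\chi_{[1-\delta,1+\delta]}(a)$ is a projection in $C^{*}(a,I)$, and on $\sigma(a)$ one has $|\chi_{[1-\delta,1+\delta]}(t)-t|\le\delta$, giving $\|q-a\|\le\delta$ and $\|q-p\|\le\|q-a\|+\|a-p\|\le 2\delta<1$. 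For (3): put $z:=qp+(I-q)(I-p)$. A short computation gives $zp=qp=qz$, whence $pz^{*}=z^{*}q$ and $z^{*}zp=pz^{*}z$; also $I-z=p+q-2qp=(I-2q)(p-q)$ with $I-2q$ a self-adjoint unitary, so $\|I-z\|=\|p-q\|<1$ and $z$ is invertible. By (1), the unitary $w$ in the polar decomposition $z=w|z|$ satisfies $\|I-w\|\le\sqrt2\,\|I-z\|=\sqrt2\,\|p-q\|$; and since $z^{*}z$ commutes with $p$, so do $|z|$ and $|z|^{-1}$, so $wp=zp|z|^{-1}=qz|z|^{-1}=qw$, i.e.\ $wpw^{*}=q$.

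The step I expect to be the real obstacle is the sharp constant $\sqrt2$ in (1): the lazy bound $\|I-u\|\le\|I-x\|+\|x-u\|=\|I-x\|+\|\,|x|-I\,\|\le 2\|I-x\|$ only gives the constant $2$, and improving it to $\sqrt2$ (which is optimal, as one already sees in $\mathbb{C}$) seems to genuinely require routing the positivity estimate through the non-normal operator $w$ and then transferring it back to $u$ via the fact that $u$ alone is normal --- this is exactly what the spectrum-versus-numerical-range comparison accomplishes.
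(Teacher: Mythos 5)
Your argument is correct in all three parts. Note, however, that the paper offers no proof of this lemma at all --- it is stated as a collection of ``standard estimates'' and used as a black box --- so there is nothing internal to compare your route against. Your derivation of the sharp constant $\sqrt{2}$ in (1), by pushing the positivity estimate $x+x^{*}\ge xx^{*}+(1-\delta^{2})I$ through the similarity $w=|x^{*}|^{1/2}u|x^{*}|^{-1/2}$, using the numerical-range containment of $\sigma(w)=\sigma(u)$, and then exploiting normality of $u$ to convert the spectral bound back into the operator inequality $u+u^{*}\ge 2\sqrt{1-\delta^{2}}\,I$, is a legitimate (and standard) way to get the optimal bound; the deductions of (2) via spectral perturbation of self-adjoint elements and of (3) via the interpolating element $z=qp+(I-q)(I-p)$ together with part (1) are likewise complete, including the needed commutation of $|z|^{-1}$ with $p$.
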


In the paper, we consider metric between maps restricted to finite sets.
The following are introduced by \cite{CSSWW}.

\begin{defn}\upshape
Let $A$ and $B$ be C$^*$-algebras and let $\phi_1,\phi_2\colon A\to B$ be maps.
Given a subset $X\subseteq A$ and $\varepsilon>0$,
write $\phi_1\approx_{X,\varepsilon}\phi_2$ if
\[
\|\phi_1(x)-\phi_2(x)\|\le \varepsilon, \ \ x\in X.
\]
\end{defn}

\begin{defn}\upshape
Let $A$ and $B$ be C$^*$-algebras, $X$ a subset of $A$ and $\varepsilon>0$.
Given a completely positive contractive map (cpc map) $\phi\colon A\to B$,
we call $\phi$ is an ($X,\varepsilon$)-{\it approximate} $*$-{\it homomorphism}
if it satisfies
\[
\| \phi (x) \phi(x^*)-\phi(xx^*)\| \le \varepsilon, \ \ x\in X\cup X^*.
\]
\end{defn}

We only consider pairs of the form $(x,x^*)$ in the previous definition
by the following proposition, which can be found as \cite[Lemma 7.11]{Paulsen}.

\begin{prop}\label{1/2}
Let $A$ and $B$ be $\mathrm{C}^*$-algebras and $\phi\colon A\to B$ a cpc map.
Then for $x,y\in A$,
\[
\| \phi(x y) -\phi(x) \phi(y) \| \le \| \phi(x x^*) -\phi(x) \phi(x^*) \|^{1/2} \|y \|.
\]
\end{prop}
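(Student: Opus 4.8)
The plan is to reduce the estimate to the Stinespring dilation theorem together with two elementary facts about operator norms, so that the inequality becomes essentially a one-line computation. First, I would replace $B$ by its image under a faithful representation, so that we may assume $B\subseteq\mathbb{B}(H)$ and all the norms appearing in the statement are computed on a common Hilbert space. Then Stinespring's theorem applied to the cpc map $\phi\colon A\to\mathbb{B}(H)$ produces a Hilbert space $K$, a $*$-representation $\pi\colon A\to\mathbb{B}(K)$, and an operator $V\colon H\to K$ with $\phi(a)=V^*\pi(a)V$ for all $a\in A$; contractivity of $\phi$ forces $V$ to be a contraction, and this is the only place the hypothesis $\|\phi\|\le 1$ is used.

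Next I would introduce the projection $P:=I_K-VV^*$ and compute directly, using multiplicativity of $\pi$,
\[
\phi(xy)-\phi(x)\phi(y)=V^*\pi(xy)V-V^*\pi(x)\,VV^*\,\pi(y)V=V^*\pi(x)\bigl(I_K-VV^*\bigr)\pi(y)V=\bigl(V^*\pi(x)P\bigr)\bigl(P\pi(y)V\bigr),
\]
where in the last step I have used $P=P^*=P^2$ to split the factor. Submultiplicativity of the operator norm then gives
\[
\|\phi(xy)-\phi(x)\phi(y)\|\le\|V^*\pi(x)P\|\;\|P\pi(y)V\|\le\|V^*\pi(x)P\|\;\|y\|,
\]
since $P$ and $V$ are contractions and $\pi$ is norm-decreasing. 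It remains to identify the first factor: applying $\|T\|^2=\|TT^*\|$ with $T=V^*\pi(x)P$, and then running the computation of the first display with $y$ replaced by $x^*$, I get
\[
\|V^*\pi(x)P\|^2=\|V^*\pi(x)P\,\pi(x^*)V\|=\|V^*\pi(x)\bigl(I_K-VV^*\bigr)\pi(x^*)V\|=\|\phi(xx^*)-\phi(x)\phi(x^*)\|,
\]
and combining the last two displays yields the claimed bound.

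There is no genuine obstacle here: the argument is the standard Stinespring-plus-projection trick, and the content of Proposition~\ref{1/2} is exactly that one can localise the defect of multiplicativity of a cpc map to the pairs $(x,x^*)$. The only two points that deserve a word of care are ensuring that the dilating operator $V$ is a contraction (which is where contractivity of $\phi$ enters) and arranging at the outset that the norms on both sides of the inequality are taken in the same $\mathbb{B}(H)$, which is handled by composing with a faithful representation of $B$. Everything else reduces to the identities $\phi(\cdot)=V^*\pi(\cdot)V$, $\|T\|^2=\|TT^*\|$, and $\|ST\|\le\|S\|\,\|T\|$.
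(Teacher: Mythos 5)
The paper does not actually prove this proposition; it simply cites \cite[Lemma 7.11]{Paulsen}, and your Stinespring argument is the standard proof of that cited result. The structure is right, but one step as written is false: for a cpc map that is not unital, the Stinespring operator $V$ is only a contraction, so $VV^*$ is merely a positive contraction and $P:=I_K-VV^*$ is in general \emph{not} a projection. The identity $P=P^2$ that you invoke to split $V^*\pi(x)(I_K-VV^*)\pi(y)V$ into the product $\bigl(V^*\pi(x)P\bigr)\bigl(P\pi(y)V\bigr)$ therefore does not hold; $VV^*$ is a projection only when $V$ is a partial isometry, e.g.\ when $\phi$ is unital and $V$ is an isometry.

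The repair is immediate and preserves every subsequent line: since $0\le I_K-VV^*\le I_K$, set $Q:=(I_K-VV^*)^{1/2}$, a positive contraction, and write $I_K-VV^*=Q\cdot Q$, so that $\phi(xy)-\phi(x)\phi(y)=\bigl(V^*\pi(x)Q\bigr)\bigl(Q\pi(y)V\bigr)$. Then $\|Q\pi(y)V\|\le\|y\|$ because $Q$, $V$ are contractions and $\pi$ is norm-decreasing, and $\|V^*\pi(x)Q\|^2=\|V^*\pi(x)Q^2\pi(x^*)V\|=\|\phi(xx^*)-\phi(x)\phi(x^*)\|$ exactly as in your final display. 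With that substitution your proof is complete and coincides with the standard argument behind the cited lemma.
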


\begin{defn}\upshape
Let $A$ and $B$ be C$^*$-algebras and let $C$ be a C$^*$-subalgebras of $A\cap B$.
A map $\phi:A\to B$ is  {\it $C$-fixed} if $\phi|_C=\id_C$.
\end{defn}

\begin{rem}\upshape
Given a map $\phi\colon A\to B$ between $\C^*$-algebras and a $\C^*$-subalgebra $C$  of $A\cap B$,
if $\phi$ is $C$-fixed, then $\phi$ is a $C$-bimodule map, that is,
for $x,z\in C$ and $y\in A$,
\[
\phi(x y z)=x \phi(y) z.
\]
\end{rem}

The following lemma appears in \cite[p.332]{Arveson}.
We need the lemma in Lemma \ref{1.4}, \ref{1.5} and \ref{1.8}.

\begin{lemma}\label{Arveson}
Let $X\subseteq \mathbb{C}$ be a compact set and $\varepsilon,M>0$.
Then given a continuous function $f\in C(X)$, 
there exists $\eta>0$
such that
for any Hilbert space $H$, normal operator $s\in\mathbb{B}(H)$ with $\mathrm{s p}(s)\subseteq X$ and $a\in \mathbb{B}(H)$ with $\| a \| \le M$,
the inequality $\| s a -  a s \| <\eta$ implies that $\|   f(s)a  - a f(s) \| <\varepsilon$.
\end{lemma}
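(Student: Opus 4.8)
The plan is to reduce the statement for an arbitrary continuous $f$ to the case of $*$-polynomials in $s$ and $s^{*}$, and then to isolate the one genuinely nontrivial ingredient: that almost-commutation with the normal operator $s$ forces almost-commutation with $s^{*}$.

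First I would apply Stone--Weierstrass. Since $X\subseteq\mathbb{C}$ is compact, the $*$-polynomials $p(z,\bar z)=\sum_{j,k}c_{jk}z^{j}\bar z^{k}$ are dense in $C(X)$, so given $\varepsilon>0$ I can choose such a $p$ with $\sup_{z\in X}|f(z)-p(z,\bar z)|<\varepsilon/(3M)$. Because $s$ is normal with $\mathrm{sp}(s)\subseteq X$, continuous functional calculus gives $\|f(s)-p(s,s^{*})\|=\sup_{z\in\mathrm{sp}(s)}|f-p|\le\varepsilon/(3M)$, whence
\[
\|f(s)a-af(s)\|\le 2M\,\|f(s)-p(s,s^{*})\|+\|p(s,s^{*})a-ap(s,s^{*})\|<\tfrac{2\varepsilon}{3}+\|p(s,s^{*})a-ap(s,s^{*})\|.
\]
It therefore suffices to bound the commutator of a fixed $*$-polynomial. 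Writing $R:=\sup_{z\in X}|z|\ (\ge\|s\|)$ and expanding each monomial by the Leibniz identity $[AB,a]=A[B,a]+[A,a]B$, the commutator $[s^{j}(s^{*})^{k},a]$ is a sum of $j+k$ words, each of operator norm at most $R^{\,j+k-1}\max(\|[s,a]\|,\|[s^{*},a]\|)$. Hence $\|p(s,s^{*})a-ap(s,s^{*})\|\le C_{p}\max(\|[s,a]\|,\|[s^{*},a]\|)$ with $C_{p}:=\sum_{j,k}|c_{jk}|(j+k)R^{\,j+k-1}$ depending only on $p$ and $X$.

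This reduces the whole lemma to a single uniform estimate: that $\|[s,a]\|$ small forces $\|[s^{*},a]\|$ small, uniformly over all Hilbert spaces $H$, all normal $s$ with $\mathrm{sp}(s)\subseteq X$, and all $a$ with $\|a\|\le M$. This is a quantitative Fuglede--Putnam phenomenon, and it is exactly the place where normality of $s$ is indispensable (for non-normal $s$ the conclusion fails). The cleanest way to extract a \emph{uniform} $\eta$ is a compactness argument by contradiction. If no $\eta$ worked for some $f$ and some $\varepsilon_{0}>0$, I would choose $H_{n}$, normal $s_{n}$ with $\mathrm{sp}(s_{n})\subseteq X$, and $a_{n}$ with $\|a_{n}\|\le M$ satisfying $\|[s_{n},a_{n}]\|<1/n$ but $\|[f(s_{n}),a_{n}]\|\ge\varepsilon_{0}$, and pass to the C$^{*}$-algebra ultraproduct $\mathcal{A}=\prod_{\mathcal{U}}\mathbb{B}(H_{n})$ along a free ultrafilter $\mathcal{U}$. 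The classes $s:=[(s_{n})]$ and $a:=[(a_{n})]$ satisfy $\|a\|\le M$, and $s$ is normal; moreover the functional-calculus bound $\|g(s_{n})\|=\sup_{\mathrm{sp}(s_{n})}|g|\le\sup_{X}|g|$ shows that $g\mapsto[(g(s_{n}))]$ is a unital $*$-homomorphism $C(X)\to\mathcal{A}$ sending $z\mapsto s$, so $\mathrm{sp}(s)\subseteq X$, $f(s)=[(f(s_{n}))]$, and $\|[f(s),a]\|=\lim_{\mathcal{U}}\|[f(s_{n}),a_{n}]\|\ge\varepsilon_{0}$. On the other hand $[s,a]=0$ since $\|[s_{n},a_{n}]\|\to0$, so Fuglede's theorem in the C$^{*}$-algebra $\mathcal{A}$ gives $[s^{*},a]=0$, hence $[p(s,s^{*}),a]=0$ for every $*$-polynomial and, taking uniform limits, $[f(s),a]=0$, a contradiction.

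The main obstacle is precisely this uniform Fuglede step: the passage from $[s,a]\approx0$ to $[s^{*},a]\approx0$ carries all the content, and the difficulty is to make it independent of the underlying Hilbert space. The ultraproduct argument dispatches it by converting approximate relations into exact ones inside a single C$^{*}$-algebra, where classical Fuglede--Putnam applies verbatim; the remaining points—that normality, the containment $\mathrm{sp}(s)\subseteq X$, and the identity $f(s)=[(f(s_{n}))]$ all survive the limit—are routine and follow from the norm bound $\|g(s_{n})\|\le\sup_{X}|g|$. If an explicit $\eta$ is desired instead, one replaces this step by a quantitative Fuglede--Putnam inequality controlling $\|[s^{*},a]\|$ by $\|[s,a]\|$ and then chooses $\eta$ so small that $C_{p}\eta<\varepsilon/3$.
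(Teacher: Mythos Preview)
Your proof is correct but follows a genuinely different route from the paper's. The paper approximates $f$ by a polynomial $p(t)=\sum_{k}c_k t^{k}$ in the \emph{single} variable $z$ (not in $z$ and $\bar z$), sets $\eta=\tfrac{\varepsilon}{2}\bigl(\sum_{k} k|c_k|\bigr)^{-1}$, and bounds $\|[p(s),a]\|\le \sum_{k} k|c_k|\,\|[s,a]\|$ via the Leibniz rule; since $s^{*}$ never appears, no Fuglede step is needed and the constant $\eta$ is explicit. Strictly speaking this one-variable approximation is only available when analytic polynomials are dense in $C(X)$ (e.g.\ $X\subseteq\mathbb{R}$, or $X$ a compact set with empty interior and connected complement such as a proper arc of the circle), which happens to cover every application of the lemma in the paper. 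Your Stone--Weierstrass reduction, by contrast, introduces $s^{*}$ and thus forces the quantitative Fuglede--Putnam step, which you dispatch cleanly by an ultraproduct/compactness argument. Your approach is more general (it handles arbitrary compact $X\subseteq\mathbb{C}$) and conceptually transparent, at the cost of being non-constructive and invoking heavier machinery; the paper's approach is elementary and gives an explicit $\eta$, but as written has narrower scope. One minor remark: your ultraproduct paragraph already proves the full statement directly, so the preliminary reduction to $*$-polynomials is in fact redundant.
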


\begin{proof}
Let $p$ be a polynomial such that $\| f- p\|<\varepsilon/(4M)$, where this norm is the supremum norm of $C(X)$.
Let $p$ have the form  $p(t)= c_o+c_1 t +\cdots + c_n t^n$.
Define
\[
\eta:= \frac{\varepsilon}{2}  \left( \sum_{k=1}^n k |c_k|  \right)^{-1}.
\]
Let a Hilbert space $H$ be given and 
let a normal operator $s\in \mathbb{B}(H)$ and $a\in \mathbb{B}(H)_1$ satisfy $\| s a - a s \|<\eta$.
Let $D$ be the derivation $D(x)=x a - a x$.
Since $D(s^{n+1})=s D(s^n) - D(s) s^n$, $\| D(s^{n+1})\| \le \| D(s^n)\| + \| D(s)\|$.
Hence, $\| D(s^n) \| \le n \| D(s) \|$.
Therefore,
\begin{align*}
\| f(s) a - a f(s)\| 
&\le \| f(s) a - p(s) a\| +\| D(p(s))\| + \| a p(s) - a f(s)\| \\
&\le 2 \| f-p\| \| a\| + \sum_{k=1}^n k |c_k| \| D(s) \| <\varepsilon,
\end{align*}
and the lemma follows.
\end{proof}

The next lemma appears in the proof of \cite[Lemma 3.7]{CSSWW}.

\begin{lemma}\label{1.7}
Let $H$ be a Hilbert space.
Then for any $\mu_0>0$, there exists $\mu>0$ with the following property$\colon$
given a finite set $S\subseteq H_1$ and a self-adjoint operator $h\in \mathbb{B}(H)_1$,
there exists a finite set $S'\subseteq H_1$ such that for any self-adjoint operator $k\in \mathbb{B}(H)_1$, if 
\[
\| (h-k) \xi' \| < \mu , \ \ \xi' \in S',
\]
then
\[
\| ( e^{i\pi h} - e^{i\pi k}) \xi \| <\mu_0 \ \ and \ \ 
\| ( e^{i\pi h} - e^{i\pi k})^* \xi \| <\mu_0, \ \ \xi\in S.
\]
\end{lemma}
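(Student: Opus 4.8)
The plan is to combine a polynomial approximation of the unitary function $t\mapsto e^{i\pi t}$ with a telescoping identity arranged so that the unknown operator $k$ never gets applied to a vector one has to foresee when choosing $S'$. First I would fix $\mu_0>0$. Since $t\mapsto e^{i\pi t}$ is continuous on $[-1,1]$, by the Weierstrass theorem there is a polynomial $p(t)=\sum_{m=0}^{N}c_m t^m$ with $\sup_{t\in[-1,1]}|e^{i\pi t}-p(t)|<\mu_0/5$. Every self-adjoint $s\in\mathbb{B}(H)_1$ has $\mathrm{sp}(s)\subseteq[-1,1]$, so by continuous functional calculus $\|e^{i\pi s}-p(s)\|<\mu_0/5$; applying this with $s=h$ and $s=k$ handles two of the three terms in the triangle inequality
\[
\|(e^{i\pi h}-e^{i\pi k})\xi\|\le\|(e^{i\pi h}-p(h))\xi\|+\|(p(h)-p(k))\xi\|+\|(p(k)-e^{i\pi k})\xi\|.
\]
I would then declare $\mu:=\mu_0\bigl(3(1+\sum_{m=1}^{N}m|c_m|)\bigr)^{-1}$, which depends only on $\mu_0$ through the fixed choice of $p$ (in particular not on $H$).

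Next, given a finite $S\subseteq H_1$ and a self-adjoint $h\in\mathbb{B}(H)_1$, I would put $S':=\{\,h^{l}\xi:\xi\in S,\ 0\le l\le N-1\,\}$. Since $\|h\|\le1$, each $h^{l}\xi$ lies in $H_1$, so $S'$ is a finite subset of $H_1$ depending on $h$ and $S$ only. The crux is the identity
\[
h^{m}-k^{m}=\sum_{j=0}^{m-1}k^{j}(h-k)h^{m-1-j},
\]
which is verified by telescoping. Hence if $k\in\mathbb{B}(H)_1$ is self-adjoint with $\|(h-k)\xi'\|<\mu$ for all $\xi'\in S'$, then for $\xi\in S$ every vector $h^{m-1-j}\xi$ occurring above belongs to $S'$ and $\|k^{j}\|\le1$, so $\|(h^m-k^m)\xi\|<m\mu$ and therefore $\|(p(h)-p(k))\xi\|\le\sum_{m=1}^{N}|c_m|\,m\mu<\mu_0/3$. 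Feeding the three bounds into the displayed inequality gives $\|(e^{i\pi h}-e^{i\pi k})\xi\|<\mu_0$. For the second estimate note $(e^{i\pi h})^{*}=e^{-i\pi h}$ and $(e^{i\pi k})^{*}=e^{-i\pi k}$, so it suffices to rerun the argument with $e^{-i\pi t}$ in place of $e^{i\pi t}$, approximated by $\bar p(t)=\sum_{m}\overline{c_m}t^{m}$; this polynomial has the same sup-norm error and $|\overline{c_m}|=|c_m|$, so the same $S'$ and the same $\mu$ work.

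The computation is routine; the only point that needs care is that $S'$ must be manufactured from $h$ and $S$ before $k$ is revealed, which is precisely why the telescoping is set up so that $h-k$ hits the vectors $h^{m-1-j}\xi$ (depending on $h$, $S$ and $N$ only) rather than $k$-power vectors. One could instead start from the Duhamel identity $e^{i\pi h}-e^{i\pi k}=i\pi\int_{0}^{1}e^{si\pi k}(h-k)e^{(1-s)i\pi h}\,ds$, but then $S'$ would have to contain a finite net of the compact set $\{e^{(1-s)i\pi h}\xi:s\in[0,1],\ \xi\in S\}$, which is messier than the finite polynomial bookkeeping above.
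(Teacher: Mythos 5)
Your proposal is correct and follows essentially the same route as the paper: approximate $t\mapsto e^{i\pi t}$ uniformly on $[-1,1]$ by a polynomial, take $S'$ to be the orbit of $S$ under powers of $h$ up to degree minus one, and use the telescoping identity for $h^m-k^m$ so that $h-k$ only ever acts on vectors known in advance. The only differences are cosmetic (the error split $\mu_0/5$ versus $\mu_0/3$ and the explicit remark about the adjoint via $\bar p$, which the paper dispatches with ``similarly'').
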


\begin{proof}
There exists a polynomial $p(t)=\sum_{j=0}^r \lambda_j t^j$ such that
\begin{align}\label{1.7.1}
| p(t) - e^{i\pi t} | <\frac{\mu_0}{3}, \ \ -1\le t \le 1.
\end{align}
Let
\begin{align}\label{1.7.2}
\mu:= \frac{\mu_0}{3 r \sum_{j=0}^r | \lambda_j | }.
\end{align}
Given a finite set $S\subseteq H_1$ and a self-adjoint operator $h \in \mathbb{B}(H)_1$,
define
\begin{align*}
S':= \{ h^m \xi : \xi\in S, \, m \le r-1 \}.
\end{align*}
Let $k \in\mathbb{B}(H)_1$ be a self-adjoint operator with 
\begin{align}\label{1.7.3}
\| (h-k) \xi'\| <\mu, \ \ \xi'\in S'.
\end{align}
For any $\xi \in S$ and $0\le j\le r$, by (\ref{1.7.3}),
\begin{align*}
\| (h^j - k^j) \xi \| 
&\le \| (h^j - k h^{j-1}) \xi \| + \| ( k h^{j-1} - k^2 h^{j-2}) \xi \| + \dots + \| (k^{j-1} h - k^j) \xi \| \\
&\le \| (h - k) h^{j-1} \xi \| + \| k ( h-k) h^{j-2} \xi \| + \dots + \| k^{j-1} (h - k ) \xi\| \\
&\le \sum_{m=0}^{j-1} \| (h - k ) h^m \xi \|  <  r \mu.
\end{align*}
Thus, for $\xi\in S$,
\begin{align*}
\| (p(h) - p(k)) \xi \|
\le \sum_{j=0}^r |\lambda_j| \| (h^j- k^j) \xi \| 
\le \sum_{j=0}^r |\lambda_j| r\mu = \frac{\mu_0}{3},
\end{align*}
by (\ref{1.7.2}).
\begin{align*}
\| ( e^{i\pi h} - e^{i\pi k})\xi \| 
&\le \| (e^{i\pi h} -p(h)) \xi\| + \| (p(h) -p(k) )\xi\| + \| (p(k) - e^{i\pi k}) \xi \| \\
&\le \frac{\mu_0}{3} + \frac{\mu_0}{3} + \frac{\mu_0}{3} = \mu_0,
\end{align*}
by (\ref{1.7.1}).
Similarly, we have $\|(e^{i\pi h}-e^{i\pi k})^* \xi\| <\mu_0$.
\end{proof}

%%%%%%%%%%%%%%%%%%%%%%%%%%%%%%%%%%%%%%%

\section{Crossed product-like inclusions and approximate averaging}

%%%%%%%%%%%%%%%%%%%%%%%%%%%%%%%%%%%%%%%

In this section,
we introduce the crossed product-like inclusions of C$^*$-algebras.
Moreover,
we use the F\o lner condition of discrete amenable groups to modify the averaging results in \cite[Section 3]{CSSWW}.
In Theorem \ref{irreducible}, we show our first main result: Theorem A.

Given an inclusion $A\subseteq B$ of C$^*$-algebras, we denote by $\mathcal{N}_B(A)$ the normalizer  of $A$ in $B$,
that is,
$\mathcal{N}_B(A)= \{ u \in B^u : u A u^*=A\}$.

\begin{defn}\label{amenable}\upshape
Let $A\subseteq B$ be a unital  inclusion of C$^*$-algebras.
Then we call the inclusion $A\subseteq B$ is {\it crossed product-like}
if 
there exists a discrete group $U$ in $\mathcal{N}_B(A)$ such that
$B= C^*(A,U)$.
\end{defn}

Since $U$ is in $\mathcal{N}_B(A)$,
$B=C^*(A,U)$ is the norm closure of $\mathrm{span}\{ a u : a\in A, u\in U\}$.
Throughout this paper,
we only consider crossed product-like inclusions are by discrete {\it amenable} groups.

\begin{rem}\upshape
For any $x\in B$ and $\varepsilon>0$,
there exist $\{ a_1,\dots,a_N \}\subseteq A_1$ and $\{u_1,\dots,u_N\}\subseteq U$ such that $\| x- \sum_{i=1}^N a_i u_i \| <\varepsilon$.
In fact, let $K$ be a positive integer with $K \ge \max\{ \| a_1\|,\dots,\|a_N\| \}$.
Define
\[
a_{(i-1)K+j}':= \frac{1}{K} a_i, \ \ i=1,2,\dots,N, j=1,2,\dots,K.
\]
Then $a_k'\in A_1$ and
\[
\sum_{i=1}^N a_i u_i = \sum_{j=1}^K \sum_{i=1}^N a_{(i-1)K+j}' u_i.
\]
\end{rem}

\begin{exam}\upshape
Let $G$ be a discrete amenable group acting on a $\C^*$-algebra $A$.
Then an  inclusion $A\subseteq A\rtimes G$ is crossed product-like by $\{\lambda_g\}_{g\in G}$.
\end{exam}

\begin{exam}\upshape
Let $(A,G,\alpha,\sigma)$ be a twisted $\C^*$-dynamical system and let $A\rtimes_{\alpha,r}^{\sigma} G$ be the reduced twisted crossed product.
Then an inclusion $A\subseteq A\rtimes_{\alpha,r}^{\sigma} G$ is crossed product-like by $\{\lambda_{\sigma}(g)\}_{g\in G}$.
\end{exam}

\begin{exam}\upshape
Let $A\subseteq B$ be a crossed product-like inclusion of $\C^*$-algebras by $U$.
Then for a unital $\C^*$-algebra $C$, $A\otimes C\subseteq B\otimes C$ is a crossed product inclusion by $U\otimes I$.
\end{exam}

\begin{rem}\upshape
If $\mathbb{C}I \subseteq A$ is a crossed product-like inclusion of $\C^*$-algebras by a discrete amenable group,
then $A$ is strongly amenable.
Hence, the Cuntz algebras $\mathcal{O}_n$ are nuclear 
but $\mathbb{C}I\subseteq \mathcal{O}_n$ is not crossed product-like by discrete amenable groups.
\end{rem}

In the next lemma,  
to get a point-norm version of \cite[Lemma 3.3]{Chris2} 
we modify the argument of \cite[Lemma 3.2]{CSSWW} for crossed product-like inclusions by amenable groups.

\begin{lemma}\label{1.4}
Let $C\subseteq D$ be a unital inclusion of $\mathrm{C}^*$-algebras and 
let $A,B$ be intermediate $\mathrm{C}^*$-subalgebras for $C\subseteq D$ with a conditional expectation $E\colon D\to B$.
Suppose that $C\subseteq A$ is crossed product-like by a discrete amenable group $U$ and $d(A,B)<\gamma<1/4$.
Then
for any finite subset $X\subseteq A_1$ and $\varepsilon>0$,
there exists a unital $C$-fixed $(X,\varepsilon)$-approximate $*$-homomorphism $\phi\colon A\to B$ such that
\[
\| \phi -\id_A \| \le \left(8 \sqrt{2} + 2 \right)\gamma .
\]
\end{lemma}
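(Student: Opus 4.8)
The plan is to adapt the point-norm averaging argument of \cite[Lemma 3.2]{CSSWW} to the $C$-fixed setting, replacing the F\o lner-type averaging over the relevant amenable structure with an average over a finite F\o lner set of the group $U$ in $\mathcal N_A(C)$. First I would fix a finite $X\subseteq A_1$ and $\varepsilon>0$, and observe that the restriction $\psi:=E|_A\colon A\to B$ is a unital completely positive contractive map which is $C$-fixed (since $C\subseteq B$ and $E$ is a conditional expectation onto $B$) and which satisfies $\|\psi-\id_A\|\le 2\gamma$: indeed for $x\in A_1$ there is $b\in B_1$ with $\|x-b\|<\gamma$, and then $\|E(x)-x\|\le\|E(x-b)\|+\|b-x\|<2\gamma$. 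This $\psi$ is my starting point; it is already $C$-fixed, but it need not be approximately multiplicative on $X$, so I must improve it by averaging.

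Next, since $C\subseteq A$ is crossed product-like by the discrete amenable group $U\subseteq\mathcal N_A(C)$, every element of $A$ is a norm-limit of sums $\sum a_i u_i$ with $a_i\in A$, $u_i\in U$ (using the Remark following Definition \ref{amenable} to keep the $a_i$ in $A_1$). So it suffices to control the behaviour of a candidate map on products $xu$ with $x\in X$ and $u$ ranging over a suitable finite subset of $U$ determined by $X$ and $\varepsilon$. For a finite subset $F\subseteq U$ I would form the averaged map
\[
\phi_F(a):=\frac{1}{|F|}\sum_{u\in F} u^*\,\psi(u a u^*)\,u ,
\]
which is again a unital cpc map from $A$ into $B$ (note $u\in\mathcal N_A(C)\subseteq A$, so $uau^*\in A$, and $\psi$ lands in $B$, and conjugating by $u\in A$ keeps us in $B$ only if $u\in\mathcal N_B(C)$ — here one uses that $U$ normalizes $A$ and that conjugation by $u$ preserves $B$ up to the estimate, or more simply one checks $uBu^*$ is close to $B$ and absorbs this into the constant; this is the point where I expect to have to be careful). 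Crucially $\phi_F$ is still $C$-fixed: for $c\in C$, $ucu^*\in C$, so $\psi(ucu^*)=ucu^*$ and $u^*(ucu^*)u=c$. Also $\|\phi_F-\id_A\|\le 2\gamma$ is preserved under the averaging since each summand is within $2\gamma$ of the identity in norm on $A_1$ (using that $u$ is unitary).

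The heart of the matter is to choose $F$ to be a good F\o lner set so that $\phi_F$ becomes approximately multiplicative on $X$. For $v\in U$ one computes, for $x\in X$,
\[
\phi_F(vxv^*)=\frac{1}{|F|}\sum_{u\in F} u^*\psi(uvx v^* u^*)u
=v\Big(\frac{1}{|F|}\sum_{w\in Fv} w^*\psi(wxw^*)w\Big)v^* ,
\]
and the F\o lner condition $|F v\triangle F|<\delta|F|$ makes this within $4\gamma\delta$ (say) of $v\phi_F(x)v^*$; combined with the approximate identity $\|\phi_F(x)-x\|\le 2\gamma$ this forces $\|\phi_F(vx v^*)-v\phi_F(x)v^*\|$ small, and a now-standard manipulation (as in the proof of \cite[Lemma 3.2]{CSSWW}, using Proposition \ref{1/2} to pass from the $(x,x^*)$-defect to general products) upgrades approximate $U$-equivariance plus closeness to the identity into the $(X,\varepsilon)$-approximate $*$-homomorphism property. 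Concretely: for $x\in X\cup X^*$, write $\phi_F(x)\phi_F(x^*)-\phi_F(xx^*)$ and estimate each term by inserting $\psi$ and exploiting that $\psi(uxu^*)$ is $2\gamma$-close to $uxu^*$ while $\psi$ is genuinely a cpc map so the Stinespring/Schwarz inequality controls its own multiplicative defect; the F\o lner averaging kills the cross terms coming from non-equivariance. Choosing $\delta$ small enough (depending on $X,\varepsilon$) and then $F$ a corresponding F\o lner set of $U$ (which exists by amenability), one gets $\phi:=\phi_F$ with $\|\phi(x)\phi(x^*)-\phi(xx^*)\|\le\varepsilon$ for $x\in X\cup X^*$.

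Finally I would assemble the norm bound: tracking the constants through the argument — $\|\psi-\id\|\le 2\gamma$ from the conditional expectation step, an extra $8\sqrt2\,\gamma$ absorbed from the conjugations $u(\cdot)u^*$ and the polar/projection estimates of Lemma \ref{polar} needed to keep everything inside $B$ rather than merely close to $B$ — yields $\|\phi-\id_A\|\le(8\sqrt2+2)\gamma$, as claimed. The main obstacle, as flagged above, is that conjugating $\psi$ by $u\in U\subseteq\mathcal N_A(C)$ does not automatically land in $B$: one only knows $uAu^*=A$, not $uBu^*=B$. Resolving this — either by showing $uBu^*$ is $d(A,B)$-close to $B$ and correcting via Lemma \ref{polar}(3) to stay inside $B$, or by choosing the implementing unitaries more carefully — is where the bulk of the technical work and the source of the constant $8\sqrt2$ will lie; this is precisely the $C$-fixed refinement that distinguishes the present lemma from \cite[Lemma 3.2]{CSSWW}.
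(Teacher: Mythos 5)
Your starting point ($E|_A$ is unital, cpc, $C$-fixed and $2\gamma$-close to $\id_A$) and your intention to exploit a F\o lner set of $U$ match the paper, but the averaging you perform is at the wrong level, and the argument breaks in two places. The fatal one is the multiplicativity step. Each summand $a\mapsto u^*\,\psi(uau^*)\,u$ is a unitary conjugate of $E|_A$, whose multiplicative defect $\|E(x)E(x^*)-E(xx^*)\|$ is in general of order $\gamma$ and no better; an average of maps each carrying an order-$\gamma$ defect again carries an order-$\gamma$ defect. F\o lner averaging of the \emph{map} buys you approximate $U$-equivariance of $\phi_F$, but approximate equivariance plus closeness to the identity does not upgrade to an $(X,\varepsilon)$-approximate $*$-homomorphism for $\varepsilon$ much smaller than $\gamma$ --- and the entire point of the lemma is that $\varepsilon$ is arbitrary while the distance to $\id_A$ stays $(8\sqrt2+2)\gamma$. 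The paper instead averages the Stinespring projection: writing $E=P_H\pi(\cdot)|_H$, it forms $t=\frac1{|F|}\sum_{v\in F}\pi(v)P_H\pi(v^*)$, which satisfies $\|t-P_H\|\le2\gamma$ and commutes with $\pi(\tilde x)$ up to any prescribed $\eta$ by the F\o lner condition; the spectral projection $q=\chi_{[1-2\gamma,1]}(t)$ then commutes with $\pi(\tilde x)$ up to $\varepsilon^2/18$ by Lemma \ref{Arveson}, and the compression $a\mapsto P_Hw^*\pi(a)w|_H$ with $wP_Hw^*=q$ has multiplicative defect controlled by $\|[q,\pi(\tilde x)]\|$. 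That commutator mechanism is what produces an arbitrarily small defect; nothing in your construction plays its role.

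The second gap is the one you flagged yourself: $u^*\psi(uau^*)u$ lies in $u^*Bu$, not in $B$, and neither of your proposed repairs works. Lemma \ref{polar}(3) moves projections, not completely positive maps, and a ucp map whose values are merely pointwise $2\gamma$-close to $B$ cannot in general be corrected to a ucp map into $B$ --- that difficulty is exactly what the dilation is introduced to avoid. In the paper the unitary $w$ lies in $C^*(t,P_H,I_K)\subseteq C^*(\pi(A),P_H)\cap\pi(C)'$, so $P_Hw^*\pi(a)w|_H$ lands in $B$ automatically (because $P_H\pi(A)|_H=E(A)\subseteq B$) and is $C$-fixed automatically. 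The proof should therefore be restructured: dilate first, average the projection over the F\o lner set, pass to the spectral projection, and only then compress.
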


\begin{proof}
Let a finite set $X\subseteq A_1$ and $0<\varepsilon<1$ be given.
Let $D$ act on a Hilbert space $H$.
By Stinespring's theorem, 
we can find a Hilbert space $K \supseteq H$ and a unital $*$-homomorphism $\pi\colon D\to \mathbb{B}(K)$ such that
\[
E(d)= P_H \pi(d) |_H, \ \ d\in D,
\]
because $E\colon D\to B$ is a unital cpc map.
Furthermore, $P_H\in \pi(B)'$, since $E$ is a $B$-fixed map.
By Lemma \ref{Arveson},
there exists $\eta>0$ such that
for any self-adjoint operator $t\in \mathbb{B}(K)$ with 
$\mathrm{s p}(t)\subseteq [0,2\gamma]\cup[1-2\gamma,1]$
and $x\in\mathbb{B}(K)$ with $\|x\|\le 2$,
the inequality $\| x t-t x\| <\eta$ implies $\| x p - p x\|  < \varepsilon^2/18$,
where $p$ is the spectral projection of $t$ for $[1-2\gamma,1]$.
There exist $\{ u_1,\dots,u_N\}\subseteq U$ and $\{c_i^{(x)} : 1\le i \le N, x\in X\}\subseteq C_1$ such that
\[
\left\| x - \sum_{i=1}^{N} c_i^{(x)} u_i \right\| <\frac{\varepsilon}{3}, \ \ x\in X.
\]
Let $\tilde{x}:=\sum_{i=1}^N c_i^{(x)} u_i$ for $x\in X$.
Then $\|\tilde{x} \| \le \|x\|+\varepsilon<2$.
Since $U$ is amenable, we may choose a finite subset $F\subseteq U$ satisfying
\[
\frac{| u_i F\bigtriangleup F  |}{|F|} <\frac{\eta}{N}, \ \ 1\le i \le N.
\]
Define
\[
t := \frac{1}{|F|}\sum_{v\in F} \pi(v) P_H \pi(v^*) \in \mathbb{B}(K).
\]
Since $U\subseteq \mathcal{N}_A(C)$ and $P_H\in \pi(C)'$, we have $t \in \pi(C)'$.
For any $x\in X$,
\begin{align*}
\pi(\tilde{x}) t
&=\sum_{i=1}^{N} \pi( c_i^{(x)} u_i ) \frac{1}{|F|} \sum_{v\in F} \pi(v) P_H \pi(v^*) \\
&=\frac{1}{|F|} \sum_{i=1}^{N} \sum_{v\in F}   \pi( c_i^{(x)}u_i v) P_H \pi (v^* ) \\
&=\frac{1}{|F|} \sum_{i=1}^{N} \sum_{\tilde{v}\in u_i F}   \pi( c_i^{(x)}\tilde{v}) P_H \pi (\tilde{v}^*u_i )
\end{align*}
and
\begin{align*}
t \pi(\tilde{x})
&= \frac{1}{|F|} \sum_{v\in F} \pi(v) P_H \pi(v^*) \sum_{i=1}^{N} \pi( c_i^{(x)} u_i ) \\
&= \frac{1}{|F|} \sum_{i=1}^{N} \sum_{v\in F} \pi( c_i^{(x)} v) P_H \pi ( v^* u_i ) .
\end{align*}
Therefore, 
\begin{equation}\label{eta}
\left\|  \pi(\tilde{x}) t - t  \pi(\tilde{x}) \right\| 
\le \sum_{i=1}^{N} \frac{| u_i F\bigtriangleup F|}{|F|} <\eta, \ \ x \in X.
\end{equation}
For $v\in F$, there exists $v'\in B_1$ such that 
$\| v - v' \| <\gamma$.
Since $P_H\in \pi(B)'$, we have 
\[
\| \pi(v) P_H - P_H \pi(v) \| \le 
\| \pi(v) P_H - \pi(v') P_H\| + \| P_H \pi(v') - P_H \pi (v) \| \le 2 \gamma.
\]
Thus, $\mathrm{s p}(t) \subseteq [0,2\gamma]\cup [1-2\gamma,1]$, since
\begin{align*}
\| t -P_H\| 
&=\left\| \frac{1}{|F|}\sum_{v\in F} \pi(v) P_H \pi(v^*) -  \frac{1}{|F|}\sum_{v\in F} P_H \pi(v) \pi(v^*) \right\| \\
&\le \frac{1}{|F|} \sum_{v\in F} \| \pi(v) P_H - P_H \pi(v) \| \| \pi(v^*)\| 
\le 2 \gamma.
\end{align*}
Let $q=\chi_{[1-2\gamma,1]}(t) \in C^* ( t, I_K)$.
By (\ref{eta}), 
\begin{equation}\label{ab}
\| \pi(\tilde{x}) q- q\pi(\tilde{x})\| < \frac{\varepsilon^2}{18}, \ \ x\in X.
\end{equation}
Since $\| q- P_H\| \le 2\| t- P_H\| <1$, 
there exists a unitary $w\in C^*( t, P_H , I_K)$ such that 
$w P_H w^* =q$ and $\| w-I_K\| \le  \sqrt{2} \| q- P_H\|$.
Define $\phi\colon A\to \mathbb{B}(K)$ by
\[
\phi(a)= P_H w^* \pi(a) w |_H, \ \ a\in A.
\]
Since $w\in C^*(t, P_H, I_K) \subseteq C^*( \pi(A) ,P_H)$ and 
$P_H \pi(A)|_H = \mathrm{ran}(E) \subset B$,
the range of $\phi$ is contained in $B$.
Furthermore, $\phi|_C=\id_C$ because $w\in C^*(t,P_H,I_K) \subseteq \pi(C)'$.

For $x \in X\cup X^*$,
by using $P_H w^* =P_H w^* q$ and (\ref{ab}),
\begin{align}
\begin{split}
\label{cc}
\| \phi(\tilde{x} \tilde{x}^*) -\phi(\tilde{x})\phi(\tilde{x}^*)\| 
&=\| P_H w^* \pi(\tilde{x}\tilde{x}^*) w P_H - P_H w^* \pi(\tilde{x}) w P_H w^* \pi(\tilde{x}^*)w P_H \| \\
&=\| P_H w^* q \pi(\tilde{x} \tilde{x}^*) w P_H - P_H w^* \pi(\tilde{x}) q \pi(\tilde{x}^*) w P_H \| \\
&\le \| q \pi(\tilde{x}) -\pi (\tilde{x}) q \| \| \pi (\tilde{x}^*) \|   < \frac{\varepsilon^2}{9} .
\end{split}
\end{align}
Therefore,
by (\ref{cc}) and Proposition \ref{1/2},
\begin{align*} 
&\| \phi(x x^*) -\phi(x)\phi(x^*)\|  \\
&\le \| \phi(x x^*) - \phi(x \tilde{x}^*) \| + 
\| \phi(x \tilde{x}^*) -\phi(x) \phi(\tilde{x}^*)\| +
\| \phi(x)\phi(\tilde{x})-\phi(x)\phi(x^*)\| \\
&\le \| x x^* - x \tilde{x}^*\| +
\| \phi( \tilde{x}\tilde{x}^*)-\phi(\tilde{x})\phi(\tilde{x}^*)\|^{1/2}\| x\| + 
\|\phi(x)\| \| \phi(\tilde{x}^*)-\phi(x^*)\| \\
&\le \frac{\varepsilon}{3} + \frac{\varepsilon}{3} + \frac{\varepsilon}{3} = \varepsilon.
\end{align*}

For $a\in A_1$, we have
\begin{align*}
\| \phi(a) - a \| 
&\le \| \phi(a) - E(a) \| + \| E(a) -a \| \\
&\le \| P_H w^* \pi(a) w P_H - P_H \pi(a) P_H\| + 2 d(A,B) \\
&\le 2 \|w-I_K\| +2d(A,B) \le (8\sqrt{2}+2)\gamma,
\end{align*}
and the lemma follows.
\end{proof}

The next lemma is a version of \cite[Lemma 3.4]{CSSWW} for crossed product-like inclusions by amenable groups.

\begin{lemma}\label{1.5}
Let $A,B$ and $C$  be $\mathrm{C}^*$-algebras with a common unit.
Suppose that $C\subseteq A\cap B$ and $C\subseteq A$ is crossed product-like by a discrete amenable group $U$.
Then for any finite set $X\subseteq A_1$ and $\varepsilon>0$, 
there exist a finite set $Y\subseteq A_1$ and $\delta>0$ with the following property\,$:$
Given $\gamma<1/10$ and 
two unital $C$-fixed $(Y,\delta)$-approximation $*$-homomorphisms 
$\phi_1,\phi_2\colon A\to B$ with $\phi_1\approx_{Y,\gamma} \phi_2$,
there exists a unitary $u\in C'\cap B$ 
such that 
\[
\phi_1 \approx_{X,\varepsilon} \mathrm{Ad}(u) \circ \phi_2  \ \ \text{and} \ \  
\| u-I\| \le \sqrt{2}(\gamma +\delta).
\]
\end{lemma}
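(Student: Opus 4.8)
The plan is to mimic the strategy of \cite[Lemma 3.4]{CSSWW}, but carry the $C$-fixed property and the crossed product-like structure through the construction so that the implementing unitary lands in $C'\cap B$. Recall the standard mechanism: from an approximate $*$-homomorphism one builds, via a partition-of-unity / Stinespring-type dilation argument, a projection that is close to the identity representation's support, and a unitary conjugating one to the other; here the extra content is that all the operators produced must commute with $C$. First I would fix the target data $X\subseteq A_1$ and $\varepsilon>0$. Using that $C\subseteq A$ is crossed product-like by the discrete amenable group $U$, I replace each $x\in X$ by a finite sum $\tilde x=\sum_i c_i^{(x)}u_i$ with $c_i^{(x)}\in C_1$, $u_i\in U$, so that $\|x-\tilde x\|$ is as small as needed (say $<\varepsilon/6$); this reduces everything to controlling $\phi_j$ on the elements $c_i^{(x)}$ (which are fixed, since $\phi_j$ is $C$-fixed) and on the unitaries $u_i$. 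So the relevant test set $Y$ will be taken to contain $X$ together with the finite collection $\{u_i\}$ of normalizing unitaries appearing in all the $\tilde x$, $x\in X\cup X^*$, and possibly products $u_iu_j^*$ and $c$-coefficients; $\delta$ will be chosen small at the end.

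Next I would run the dilation/comparison-of-projections argument. Dilate $\phi_1$ (a unital cpc map, hence an $(Y,\delta)$-approximate $*$-homomorphism) by Stinespring to a unital $*$-homomorphism $\pi\colon A\to\mathbb B(K)$ with $K\supseteq B$'s Hilbert space and $P\in\pi(B)'$, $\phi_1(a)=P\pi(a)P$. Because $\phi_1$ is $C$-fixed, $P$ commutes with $\pi(C)$. Then $\phi_2$, being close to $\phi_1$ on $Y$, is approximately implemented by $\pi$ compressed to a nearby projection; precisely, $a\mapsto P\pi(a)P$ and $a\mapsto$ (the GNS-type model of $\phi_2$) give two maps, and the operator $\sum$-type average or the isometry intertwining them produces a projection $q$ with $\|q-P\|$ controlled by $\gamma+\delta$ (up to constants absorbed into $\sqrt2$), and with $q$ commuting with $\pi(C)$ because on $C$ both maps equal $\id_C$. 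Here Lemma \ref{Arveson} is invoked exactly as in Lemma \ref{1.4}: small commutators $\|\pi(c)h-h\pi(c)\|$ for the relevant self-adjoint $h$ built out of the approximation data pass to small commutators with the spectral projection $q=\chi_{[1-2\gamma,1]}(h)$, so that $q\in\pi(C)'$, and the $C$-fixedness forces the "error" to vanish on $C$ rather than just be small. Lemma \ref{polar}(3) then yields a unitary $w\in C^*(P,q)\subseteq\pi(C)'$ with $wPw^*=q$ and $\|w-I\|\le\sqrt2\|P-q\|\le\sqrt2(\gamma+\delta)$.

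Finally I set $u:=PwP|_H$, or rather the unitary in $B$ obtained by compressing $w$ to $H$ (one checks $PwP$ is unitary on $\mathrm{ran}P$ because $w$ conjugates $P$ to $q$ and $P q$-type estimates are available, exactly as in the cited lemma), note $u\in C^*(\pi(A),P)$ compressed, which lies in $B$, and $u\in C'\cap B$ since $w\in\pi(C)'$ and $P\in\pi(C)'$. The estimate $\|u-I\|\le\sqrt2(\gamma+\delta)$ is immediate, and $\mathrm{Ad}(u)\circ\phi_2\approx_{X,\varepsilon}\phi_1$ follows by expanding $\phi_1(x)-u\phi_2(x)u^*$ through the chain $x\rightsquigarrow\tilde x\rightsquigarrow\sum c_i^{(x)}u_i$, using $C$-fixedness on the $c_i^{(x)}$, the approximate multiplicativity on the $u_i$ (controlled by $\delta$ on $Y$), and Proposition \ref{1/2}; choosing $\delta$ small enough at the outset makes the accumulated error at most $\varepsilon$. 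The main obstacle I anticipate is purely bookkeeping: arranging the test set $Y$ and the threshold $\delta$ so that \emph{all} the commutator estimates feeding Lemma \ref{Arveson}, the projection comparison, and the final telescoping simultaneously close up, while keeping the unitary genuinely inside $C'\cap B$ (not merely approximately commuting with $C$) — this is exactly where the $C$-fixed hypothesis, as opposed to "$C$-almost-fixed", is essential, and it must be tracked carefully through the Stinespring dilation where $C$ could a priori be moved.
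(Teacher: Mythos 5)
Your opening reduction (replacing each $x\in X$ by $\tilde x=\sum_i c_i^{(x)}u_i$ with $c_i^{(x)}\in C_1$, $u_i\in U$) matches the paper, but the core mechanism you propose is not the one that proves this lemma, and it has a genuine gap. The Stinespring-dilation-plus-projection-comparison argument you describe is the engine of Lemma~\ref{1.4} (producing a single approximate $*$-homomorphism close to $\id_A$ from the conditional expectation), not of Lemma~\ref{1.5}. Here the task is to compare \emph{two} given approximate $*$-homomorphisms $\phi_1,\phi_2$, and your key step --- ``$\phi_2$, being close to $\phi_1$ on $Y$, is approximately implemented by $\pi$ compressed to a nearby projection $q$ with $\|q-P\|$ controlled by $\gamma+\delta$'' --- is unsupported: $\phi_1$ and $\phi_2$ have a priori unrelated Stinespring dilations, and producing an intertwiner between them is precisely the content of the lemma, so this step is circular. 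Moreover, even granting such a $q$ and a unitary $w$ with $wPw^*=q$, the compression $PwP|_H$ is not a unitary of $B$ (it is only a contraction close to one), so the claims that $u$ is unitary and that $\|u-I\|\le\sqrt2(\gamma+\delta)$ do not follow.

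The missing idea is the averaged intertwiner. The paper takes $Y$ to be a F{\o}lner set in $U$ (not merely a set containing the $u_i$'s: the condition $|u_iY\bigtriangleup Y|/|Y|<\delta/N$ is essential) and defines
\[
s:=\frac{1}{|Y|}\sum_{v\in Y}\phi_1(v)\phi_2(v^*)\in C'\cap B,
\]
which lies in $C'\cap B$ because $\phi_1,\phi_2$ are $C$-fixed and $U$ normalizes $C$. The F{\o}lner condition together with Proposition~\ref{1/2} gives $\|\phi_1(\tilde x)s-s\phi_2(\tilde x)\|\lesssim\sqrt\delta$; the closeness $\phi_1\approx_{Y,\gamma}\phi_2$ and approximate multiplicativity give $\|s-I\|\le\gamma+\delta<1$, so $s$ is invertible and the unitary $u$ in its polar decomposition lies in $C^*(s,I)\subseteq C'\cap B$ with $\|u-I\|\le\sqrt2(\gamma+\delta)$ by Lemma~\ref{polar}(1); finally Lemma~\ref{Arveson} lets $|s|$ approximately commute with $\phi_2(\tilde x)$, which converts the intertwining relation for $s$ into $\phi_1\approx_{X,\varepsilon}\Ad(u)\circ\phi_2$. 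Note that in your version the amenability of $U$ is never actually used beyond the decomposition of $x$, which only needs that $U$ generates; that is a sign the argument cannot close, since amenability is exactly what makes the averaging $s$ almost central for the two actions.
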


\begin{proof}
Let a finite set $X\subseteq A_1$ and $0<\varepsilon<1$ be given.
There exist $\{ u_1,\dots,u_N \} \subseteq U$ and 
$\{ c_i^{(x)} : 1\le i \le N, x\in X \} \subseteq C_1$
such that
\[
\left\| x- \sum_{i=1}^N c_i^{(x)} u_i \right\| <\frac{\varepsilon}{3} , \ \ x\in X.
\]
Let $\tilde{x}:=\sum_{i=1}^{N} c_i^{(x)} u_i^{(x)}$ for $x\in X$.
Then $\| \tilde{x} \| \le 1+ \varepsilon/3< 2$.
By Lemma \ref{Arveson}, 
there exists $\eta>0$ such that for any $s\in B_1$ and $a\in B$ with $\| a\|\le 2$, 
the inequality $\| s s^* a- a s s^*\| <\eta$ implies $\| |s| a- a |s| \| < \varepsilon/12$.
Let 
\[
0<\delta < \min \left\{ \left(\frac{\varepsilon}{60}\right)^2, \frac{\eta^2}{100} \right\}.
\]
There exists a finite set $Y \subseteq U$ such that
\[
\frac{ | u Y \bigtriangleup Y | }{ | Y | } <\frac{\delta}{N}, \ \ 
u\in \left\{ u_i, u_i^* :  1\le i\le N \right\}.
\]

Let $\gamma<1/10$ and 
$\phi_1, \phi_2\colon A\to B$ be $C$-fixed $(Y,\delta)$-approximation $*$-homomorphisms 
with $\phi_1 \approx_{Y,\gamma} \phi_2$.
Define
\[
s:= \frac{1}{ | Y | } \sum_{v\in Y} \phi_1(v) \phi_2(v^*).
\]
Since $\phi_1$ and $\phi_2$ are $C$-fixed maps and for $u\in U$, $u C u^*=C$, 
we have $s\in C'\cap B$.
By Proposition \ref{1/2}, for $x\in X$ and $v\in Y$,
\begin{align}
&\| \phi_1 (\tilde{x} v)-\phi_1 (\tilde{x}) \phi_1 (v) \| \le \| \phi_1 (v v^* ) - \phi_1 (v) \phi_1 (v^*) \|^{1/2} \|\tilde{x} \| \le 2 \sqrt{\delta}, \label{ad} \\
&\| \phi_2( v^* \tilde{x} ) -\phi_2(v^*) \phi_2( \tilde{x})\| \le \| \phi_2( v^* v)-\phi_2(v^*)\phi_2(v)\|^{1/2}\| \tilde{x} \| \le 2 \sqrt{\delta}. \label{ae}
\end{align}
Furthermore,
\begin{align}
\begin{split}\label{aa}
\frac{1}{|Y|} \sum_{v\in Y} \phi_1(\tilde{x} v) \phi_2(v^*) 
&= \frac{1}{|Y|} \sum_{v\in Y} \sum_{i=1}^{N} \phi_1 \left( c_i^{(x)} u_i  v \right) \phi_2(v^*) \\
&= \frac{1}{|Y|} \sum_{i=1}^{N} \sum_{v\in u_i Y} \phi_1 \left( c_i^{(x)} v \right) \phi_2 \left( v^* u_i \right)
\end{split}
\end{align}
and
\begin{align}
\begin{split}\label{ab}
\frac{1}{|Y|} \sum_{v\in Y} \phi_1( v) \phi_2(v^* \tilde{x} ) 
&=\frac{1}{|Y|} \sum_{v\in Y} \sum_{i=1}^{N} \phi_1(v) \phi_2 \left( v^* c_i^{(x)} u_i \right) \\
&=\frac{1}{|Y|} \sum_{i=1}^{N} \sum_{v\in Y} \phi_1 \left( c_i^{(x)} v \right) \phi_2 \left( v^* u_i \right)
\end{split}
\end{align}
By (\ref{aa}), (\ref{ab}) and the choice of $Y$, for $x\in X$,
\begin{align}
\begin{split}\label{ac}
\left\| \frac{1}{|Y|} \sum_{v\in Y} \phi_1(\tilde{x} v) \phi_2(v^*)  - \frac{1}{|Y|} \sum_{v\in Y} \phi_1( v) \phi_2(v^* \tilde{x} ) \right\| < \sum_{i=1}^N \frac{ | u_i^{(x)} Y \bigtriangleup Y | }{ | Y | } <\delta.
\end{split}
\end{align}
Similarly, we have
\begin{equation}\label{az}
\left\| \frac{1}{|Y|} \sum_{v\in Y} \phi_1(\tilde{x}^* v) \phi_2(v^*)  - \frac{1}{|Y|} \sum_{v\in Y} \phi_1( v) \phi_2(v^* \tilde{x}^* ) \right\|
 <\delta, \ \ x\in X.
\end{equation}
By (\ref{ad}), (\ref{ae}) and (\ref{ac}), 
\begin{equation}\label{af}
\| \phi_1( \tilde{x} ) s- s \phi_2(\tilde{x}) \| \le \delta + 4 \sqrt{\delta} < 5 \sqrt{\delta}, \ \ x \in X \cup X^*.
\end{equation}
By taking adjoints,
\[
\| s^* \phi_1(\tilde{x}) - \phi_2(\tilde{x}) s^* \| \le 5 \sqrt{\delta} , \ \ x \in X\cup X^*.
\]
Thus, for $x\in X\cup X^*$,
\begin{align*}
\| \phi_2(\tilde{x}) s^* s  - s^* s  \phi_2(\tilde{x}) \| 
&\le \| \phi_2(\tilde{x}) s^*s - s^* \phi_1( \tilde{x}) s \| + \| s^* \phi_1(\tilde{x} ) s - s^* s \phi_2(\tilde{x}) \| \\
&\le \| \phi_2(\tilde{x}) s^* - s^* \phi_1(\tilde{x}) \| \| s\|  + \| s^* \| \| \phi_1(\tilde{x}) s -s \phi_2(\tilde{x}) \| \\
&\le 10 \sqrt{\delta}<\eta.
\end{align*}
By the choice of $\eta$ and (\ref{ah}),
\begin{equation}\label{ah}
\| \phi_2(\tilde{x}) |s| - |s| \phi_2(\tilde{x}) \| < \frac{\varepsilon}{12}, \ \ x\in X\cup X^*.
\end{equation}
Since $\phi_1$ is a $(Y,\delta)$-approximation $*$-homomorphism and $\phi_1\approx_{Y,\gamma} \phi_2$, we have
\begin{align}
\begin{split}\label{aj}
\| s -I \| 
&= \left\| \frac{1}{ |Y| } \sum_{v\in Y} \phi_1(v) \phi_2(v^*) - \frac{1}{ |Y| } \sum_{v\in Y} \phi_1(v v^*) \right\| \\
&\le \frac{1}{ |Y| } \sum_{v\in Y} \left\|  \phi_1(v) \phi_2(v^*) -  \phi_1(v) \phi_1(v^*) \right\| 
+  \frac{1}{ |Y| } \sum_{v\in Y} \left\| \phi_1(v) \phi_1(v^*)  - \phi_1(v v^*) \right\| \\
&\le \gamma + \delta<1.
\end{split}
\end{align}
Since this inequality gives invertibility of $s$, 
the unitary $u$ in the polar decomposition $s=u|s|$ lies in 
$C^*(s, I)\subseteq C'\cap B$ and satisfies $\| u- I \| \le \sqrt{2}(\gamma+\delta)$.
Then, by (\ref{aj}),
\begin{align*}
\| |s | -I \| 
\le \| u^*s- I \| 
\le \| s- I\| + \| I -u\| 
\le (1+ \sqrt{2})(\gamma+ \delta)< \frac{1}{2}.
\end{align*}
Hence, $\| |s|^{-1} \| \le 2$ so,
\begin{align}\label{ak}
\begin{split}
\| \phi_1(\tilde{x}) - u \phi_2(\tilde{x})u^*  \|
&= \| \phi_1(\tilde{x})u - u \phi_2(\tilde{x}) \| \\
&\le \| \phi_1(\tilde{x}) u|s| - u \phi_2(\tilde{x}) |s| \| \, \| |s|^{-1}\| \\
&\le 2 \| \phi_1(\tilde{x}) u|s| - u \phi_2(\tilde{x}) |s| \| \\
&\le 2 \| \phi_1(\tilde{x}) s- s \phi_2(\tilde{x})\| +2 \| s \phi_2(\tilde{x}) - u \phi_2(\tilde{x}) |s| \| \\
&\le 10 \sqrt{\delta}+ 2\| |s| \phi_2(\tilde{x}) - \phi_2(\tilde{x}) |s| \| \\
&\le 10 \sqrt{\delta} + \frac{\varepsilon}{6} < \frac{\varepsilon}{3},
\end{split}
\end{align}
for $x\in X$, by (\ref{af}), (\ref{ah}) and (\ref{aj}).
For $x\in X$,
by (\ref{ak}),
\begin{align*}
\| \phi_1(x) - u \phi_2(x) u^* \| 
&\le \| \phi_1(x) - \phi_1(\tilde{x}) \| + \| \phi_1(\tilde{x}) - u \phi_2(\tilde{x}) u^* \| + \| u \phi_2(\tilde{x}) u^* -u \phi_2(x) u^* \| \\
&< \frac{\varepsilon}{3} + \frac{\varepsilon}{3} + \frac{\varepsilon}{3} = \varepsilon.
\end{align*}
Therefore, $\phi_1 \approx_{X, \varepsilon} \mathrm{Ad}(u) \circ \phi_2$.
\end{proof}

\begin{rem}\upshape
Let a pair $(Y,\delta)$ hold Lemma \ref{1.5}.
Then for any finite set $Y'\supseteq Y$ and constant $\delta'<\delta$,
a pair $(Y',\delta')$ holds Lemma \ref{1.5}.
\end{rem}

By Lemma \ref{1.4} and \ref{1.5},
we can show Theorem A.

\begin{thm}\label{irreducible}
Let $C\subseteq D$ be a unital irreducible inclusion of $\mathrm{C}^*$-algebras 
acting on a separable Hilbert space $H$.
Let $A$ and $B$ be separable intermediate $\C^*$-subalgebras for $C\subseteq D$ with 
a conditional expectation $E\colon D\to B$.
Suppose that 
$C\subseteq A$ is crossed product-like by a discrete amenable group.
If $\dab<140^{-1}$,
then $A= B$.
\end{thm}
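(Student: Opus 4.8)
The plan is to run the standard $C$-fixed intertwining argument, using Lemma~\ref{1.4} and Lemma~\ref{1.5}, and then exploit irreducibility to upgrade "unitarily equivalent" to "equal". Set $\gamma := \dab < 140^{-1}$. First I would observe that since $C \subseteq D$ is irreducible, $C' \cap D = \mathbb{C}I$; in particular any unitary produced in $C' \cap D$ (hence in $C' \cap B \subseteq C' \cap D$, after noting $B \subseteq D$) is a scalar of modulus one, so $\Ad(u)$ is trivial. Thus the conclusion of Lemma~\ref{1.5}, applied in this setting, simply says $\phi_1 \approx_{X,\varepsilon} \phi_2$ on the prescribed finite set, with no conjugation needed. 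The strategy is then: build a sequence of $C$-fixed $(X_n,\varepsilon_n)$-approximate $*$-homomorphisms $\phi_n \colon A \to B$ via Lemma~\ref{1.4}, show consecutive ones are close on large finite sets by Lemma~\ref{1.5} (with the scalars absorbed), pass to a point-norm limit to get an honest $*$-homomorphism $\alpha \colon A \to B$ which is $C$-fixed and satisfies $\|\alpha - \id_A\| \le (8\sqrt2 + 2)\gamma$, and finally show $\alpha$ is surjective, so that $B = \alpha(A) \subseteq A$, whence (since also $A \subseteq_{?} B$, or rather since $\alpha$ injective with $\|\alpha-\id\|$ small gives $A \subseteq_{(8\sqrt2+2)\gamma} B$) one concludes $A = B$ using Proposition~\ref{surjective}.

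More concretely I would proceed as follows. Fix an increasing sequence of finite subsets $X_1 \subseteq X_2 \subseteq \cdots$ of $A_1$ with dense union, and a sequence $\varepsilon_n \downarrow 0$. Run the Elliott-type back-and-forth: using Lemma~\ref{1.5} applied to $(X_n,\varepsilon_n)$ obtain $(Y_n,\delta_n)$; then use Lemma~\ref{1.4} to produce a unital $C$-fixed $(Y_n \cup X_n, \delta_n)$-approximate $*$-homomorphism $\phi_n$ with $\|\phi_n - \id_A\| \le (8\sqrt2+2)\gamma$. Since all $\phi_n$ are within $2(8\sqrt2+2)\gamma$ of each other, and this is $< \gamma'$ for a suitable $\gamma' < 1/10$ (this is where the explicit bound $140^{-1}$ is consumed: one needs $2(8\sqrt2+2)\gamma < 1/10$, and indeed $(8\sqrt2+2)\cdot 2 \cdot 140^{-1} < 1/10$), Lemma~\ref{1.5} applies to the pair $\phi_n, \phi_{n+1}$ on $(Y_n,\delta_n)$ to yield a unitary in $C' \cap B$ — a scalar by irreducibility — so that $\phi_n \approx_{X_n, \varepsilon_n} \phi_{n+1}$. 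Hence $(\phi_n(x))_n$ is Cauchy for each $x$ in the dense union, hence (by a uniform boundedness / $3\varepsilon$ argument, all maps being contractive) for all $x \in A$; the limit $\alpha := \lim_n \phi_n$ is a unital $C$-fixed cpc map $A \to B$ which is multiplicative on the dense union and therefore a $*$-homomorphism, with $\|\alpha - \id_A\| \le (8\sqrt2+2)\gamma < 1$.

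Finally I would check $\alpha(A) = B$. Since $\|\alpha - \id_A\| < 1$ and $\alpha$ is a $*$-homomorphism fixing $C$, it is isometric (a $*$-homomorphism close to the identity has trivial kernel — if $\alpha(a) = 0$ then $\|a\| = \|a - \alpha(a)\| \le (8\sqrt2+2)\gamma\|a\| < \|a\|$ unless $a = 0$), so $\alpha(A)$ is a C$^*$-subalgebra of $B$ with $A \subseteq_{(8\sqrt2+2)\gamma} \alpha(A)$. For the reverse near-inclusion $B \subseteq_{?} \alpha(A)$: given $b \in B_1$, pick $a \in A_1$ with $\|a - b\| < \gamma$; then $\|\alpha(a) - b\| \le \|\alpha(a) - a\| + \|a - b\| \le (8\sqrt2+2)\gamma + \gamma$, so $B \subseteq_{(8\sqrt2+3)\gamma} \alpha(A)$, and $(8\sqrt2+3)\cdot 140^{-1} < 1$. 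Then $B \subset_1 \alpha(A) \subseteq B$ gives $\alpha(A) = B$ by Proposition~\ref{surjective}, so $B = \alpha(A) \subseteq A$; symmetrically, or directly from $A \subseteq_{(8\sqrt2+2)\gamma} \alpha(A) = B \subseteq A$ and $(8\sqrt2+2)\cdot 140^{-1} < 1$, Proposition~\ref{surjective} gives $A = B$. I expect the main obstacle to be bookkeeping the constants so that every invocation of Lemma~\ref{1.5} (which demands its ambient closeness parameter be $< 1/10$) and every application of Proposition~\ref{surjective} (which demands the relevant near-inclusion constant be $< 1$) is actually satisfied by $\gamma < 140^{-1}$ — in particular tracking that the relevant gap between successive $\phi_n$ is $2(8\sqrt2+2)\gamma$ rather than $(8\sqrt2+2)\gamma$ — together with the routine but slightly delicate verification that the point-norm limit of the $\phi_n$ exists on all of $A$ and is genuinely multiplicative.
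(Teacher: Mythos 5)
There are two genuine gaps here, and together they show the intertwining strategy is the wrong route for this particular statement. First, the numerics: to compare consecutive maps $\phi_n,\phi_{n+1}$ via Lemma~\ref{1.5} you need their mutual distance $2(8\sqrt2+2)\gamma$ to be an admissible closeness parameter, i.e.\ $<1/10$; but $2(8\sqrt2+2)\cdot 140^{-1}=(16\sqrt2+4)/140\approx 0.19$, so your asserted inequality $(8\sqrt2+2)\cdot 2\cdot 140^{-1}<1/10$ is false. The threshold $140^{-1}$ is calibrated for a \emph{single} comparison against the identity map, where only one factor of $(8\sqrt2+2)\gamma\approx 0.095<1/10$ appears (note $10(8\sqrt2+2)=80\sqrt2+20\approx 133.1<140$). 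Second, and more fundamentally, even if the limit existed you would only obtain a $C$-fixed surjective $*$-isomorphism $\alpha\colon A\to B$ with $\|\alpha-\id_A\|\le(8\sqrt2+2)\gamma$ --- that is Theorem~\ref{3.3}, not Theorem~\ref{irreducible}. Your closing step ``$B=\alpha(A)\subseteq A$'' is a non sequitur: $\alpha(A)\subseteq B$ by construction and nothing places it inside $A$; likewise the containment ``$B\subseteq A$'' invoked just afterwards is exactly (half of) what is to be proved. Proposition~\ref{surjective} requires an honest inclusion on one side, and a fixed isomorphism uniformly close to $\id_A$ only yields the near-inclusion $A\subseteq_{(8\sqrt2+2)\gamma}B$, which is weaker than the hypothesis $d(A,B)<\gamma$ you started with.

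The missing idea is to compare against the identity and let $\varepsilon\to 0$. The paper fixes $a\in A_1$ and $\varepsilon>0$, gets $(Y,\delta)$ from Lemma~\ref{1.5} (with codomain $D$), produces one $C$-fixed $(Y,\delta)$-approximate $*$-homomorphism $\phi\colon A\to B$ from Lemma~\ref{1.4}, and applies Lemma~\ref{1.5} to the pair $\phi$ and $\id_A\colon A\to D$, which satisfy $\phi\approx_{Y,(8\sqrt2+2)\gamma}\id_A$ with $(8\sqrt2+2)\gamma<1/10$. The resulting unitary lies in $C'\cap D=\mathbb CI$ by irreducibility, so $\|\phi(a)-a\|\le\varepsilon$ with $\phi(a)\in B$; since $\varepsilon$ was arbitrary, $a\in B$ exactly, giving the genuine inclusion $A\subseteq B$. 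Then $B\subset_1 A$ (as $d(A,B)<1$) and Proposition~\ref{surjective} yield $A=B$. No sequence, limit, or surjectivity argument is needed; the whole point of the irreducibility hypothesis is that it converts the approximate intertwiner into the identity, and the $\varepsilon\to 0$ quantifier order is what upgrades ``uniformly close to $B$'' to ``contained in $B$''.
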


\begin{proof}
Let $a\in A_1,\varepsilon>0$ and $\dab<\gamma<140^{-1}$ be given.
By Lemma \ref{1.5},
there exist a finite subset $Y\subseteq A_1$ and $\delta>0$ with the following property:
Given $\gamma'<1/10$ and two unital $C$-fixed $(Y,\delta)$-approximate $*$-homomorphisms $\phi_1,\phi_2\colon A\to D$
with $\phi_1\approx_{Y,\gamma'}\phi_2$,
there exists a unitary $u\in C'\cap D$ such that 
\[
\| \phi_1(a) - (\Ad(u) \circ \phi_2)(a) \| \le \varepsilon.
\]
By Lemma \ref{1.4},
there exists a unital $C$-fixed $(Y,\delta)$-approximate $*$-homomorphism $\phi\colon A\to B$ such that 
$\| \phi - \id_A\| \le \left(8 \sqrt{2} + 2 \right)\gamma$.
Then there exists a unitary $u\in C'\cap D$ such that 
\[
\| \phi(a) - (\Ad(u)) (a) \| \le \varepsilon
\]
by the definition of $Y$ and $\delta$. 
Since $u\in C'\cap D=\mathbb{C}I$, we have $\| \phi(a) - a\| \le \varepsilon$.
Therefore, since $\phi(a)\in B$ and $\varepsilon$ is arbitrary,
$a\in B$, that is, $A\subseteq B$.
Furthermore, by Lemma \ref{surjective},
the theorem follows.
\end{proof}

In the following lemma, we modify \cite[Lemma 3.6]{CSSWW}, which is a Kaplansky density style result for approximate commutants.

\begin{lemma}\label{1.6}
Let $C\subseteq A$ be a unital inclusion of non-degenerate $\mathrm{C}^*$-algebras in $\mathbb{B}(H)$.
Suppose that $C\subseteq A$ is crossed product-like by a discrete amenable group $U$.
Then
for any finite set $X\subseteq A_1$ and $\varepsilon, \mu >0$,
there exist a finite set $Y\subseteq A_1$ and $\delta>0$
with the following property$\colon$
Given a finite set $S\subseteq H_1$ and a self-adjoint operator $m\in \overline{C'\cap A_1}^{\w}$ with 
\begin{equation}
\| m y - y m\| \le \delta, \ \ y\in Y,
\end{equation}
there exists a self-adjoint operator $a\in C'\cap A_1$ such that $\| a\| \le \| m\|$,
\begin{align}
\| ax- x a\| <\varepsilon, \ \ x\in X, 
\end{align}
and
\begin{align}
\| (a-m)\xi\| <\mu \ \ and \ \ \| (a-m)^* \xi\| <\mu , \  \ \xi \in S.
\end{align}
\end{lemma}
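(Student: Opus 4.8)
The plan is to combine the Kaplansky density theorem for the C$^*$-algebra $C'\cap A$ acting on $H$ with the F\o lner-set averaging device already used in \lemref{1.4} and \lemref{1.5}. Given the data $X$, $\varepsilon$, $\mu$, I would first use that $C\subseteq A$ is crossed product-like by $U$ to pick $\{u_1,\dots,u_N\}\subseteq U$ and $\{c_i^{(x)}:1\le i\le N,\ x\in X\}\subseteq C_1$ with $\big\|x-\sum_{i=1}^N c_i^{(x)}u_i\big\|<\varepsilon/4$ for $x\in X$, writing $\tilde x:=\sum_{i=1}^N c_i^{(x)}u_i$. By amenability of $U$, I would then choose a finite set $F\subseteq U$ with $|u_iF\bigtriangleup F|/|F|<\varepsilon/(2N)$ for $1\le i\le N$, and declare the output of the lemma to be $Y:=F$ (a finite subset of $A_1$, since $F\subseteq U\subseteq A^u$) and $\delta:=\mu/2$.

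Now suppose we are given a finite set $S\subseteq H_1$ and a self-adjoint $m\in\overline{C'\cap A_1}^{\w}$ with $\|mv-vm\|\le\delta$ for all $v\in F$. Since $C'\cap A$ contains $I_H$, it acts non-degenerately, and the Kaplansky density theorem gives $\overline{C'\cap A_1}^{\w}=\big((C'\cap A)''\big)_1$ together with a self-adjoint $b\in C'\cap A_1$ with $\|b\|\le\|m\|$ (truncating the approximating net by the continuous function $t\mapsto\max\{-\|m\|,\min\{t,\|m\|\}\}$, which fixes $m$ and is strongly continuous on bounded sets) such that $\|(b-m)v^*\xi\|<\mu/2$ for all $v\in F$ and $\xi\in S$. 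I then set
\[
a:=\frac{1}{|F|}\sum_{v\in F}vbv^*.
\]

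It remains to verify the three conclusions. Each $v\in F$ normalises $C$, so for $c\in C$ we have $(vbv^*)c=v\,b\,(v^*cv)\,v^*=v\,(v^*cv)\,b\,v^*=c\,vbv^*$; hence $vbv^*\in C'\cap A$, and therefore $a\in C'\cap A$ is self-adjoint with $\|a\|\le\|b\|\le\|m\|$, so in particular $a\in C'\cap A_1$. For the norm commutator with $X$, since $a\in C'$ we get $a\tilde x-\tilde xa=\sum_{i=1}^N c_i^{(x)}(au_i-u_ia)$, and a telescoping of the F\o lner averages gives $\|au_i-u_ia\|=\|u_iau_i^*-a\|\le(|u_iF\bigtriangleup F|/|F|)\|b\|<\varepsilon/(2N)$; combining this with $\|a\|\le1$ and $\|x-\tilde x\|<\varepsilon/4$ yields $\|ax-xa\|<\varepsilon$ for $x\in X$. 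For the vector estimate, write $vbv^*-m=v(b-m)v^*+(vm-mv)v^*$, so that
\[
\|(vbv^*-m)\xi\|\le\|(b-m)v^*\xi\|+\|vm-mv\|<\frac{\mu}{2}+\delta=\mu,
\]
and averaging over $v\in F$ gives $\|(a-m)\xi\|<\mu$ for $\xi\in S$; since $a$ and $m$ are both self-adjoint, $(a-m)^*=a-m$ and the adjoint inequality is automatic.

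The delicate point is the order of quantifiers set against the tension between the two approximations: the Kaplansky approximant $b$ is only \emph{strongly} close to $m$, whereas the conclusion demands a \emph{norm} bound on the commutator with $X$. The averaging over $F$ converts the strong data into the required norm estimate but a priori moves $a$ away from $m$; it is precisely the hypothesis that $m$ approximately commutes, with tolerance $\delta$, with the F\o lner set $Y=F$ that keeps $a$ close to $m$ on $S$. So $F$, and hence $N$ and the $\tilde x$, must be fixed before $S$ and $m$ are revealed, which is why $Y$ may depend only on $X,\varepsilon$ and $\delta$ only on $\mu$. Beyond this bookkeeping the argument is a routine mixture of Kaplansky density and the amenable-averaging estimates already developed above.
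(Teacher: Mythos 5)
Your proposal is correct and follows essentially the same route as the paper: decompose each $x\in X$ as $\sum_i c_i^{(x)}u_i$, pick a F\o lner set $F$ for $\{u_i\}$, take $Y$ from $F$ and $\delta=\mu/2$, produce a Kaplansky approximant in $C'\cap A_1$ and average it over $F$. The only (harmless) cosmetic differences are the $\varepsilon/4$ versus $\varepsilon/3$ splitting, taking $Y=F$ rather than $F\cup F^*$, and your observation that the $(a-m)^*$ estimate is automatic by self-adjointness, which the paper instead verifies directly.
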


\begin{proof}
Let a finite set $X\subseteq A_1$ and $\varepsilon,\mu>0$ be given.
There exist $\{u_1,\dots,u_{N}\}$\hspace{0pt}$\subseteq U$ and 
$\{ c_i^{(x)}: 1\le i\le N, x\in X\}\subseteq C_1$ such that
\[
\left\| x- \sum_{i=1}^{N} c_i^{(x)} u_i \right\| <\frac{\varepsilon}{3}, \ \ x\in X.
\]
Let $\tilde{x}:=  \sum_{i=1}^{N} c_i^{(x)} u_i$ for $x\in X$.
Since $U$ is amenable, there exists a finite set $F\subseteq U$ such that
\begin{align}\label{1.6.1}
\frac{ | u_i F\bigtriangleup F | }{|F|} <\frac{\varepsilon}{3N}, \ \ 1\le i\le N.
\end{align}
Define $Y:=F\cup F^*$ and $\delta:=\mu/2$.

Let $S$ be a finite set in $H_1$ and 
$m \in \overline{C'\cap A_1}^{\w}$ be a self-adjoint operator with 
\begin{align}\label{1.6.2}
\| m y - y m \| < \delta, \ \ y\in Y.
\end{align}
By Kaplansky's density theorem, 
there exists a self-adjoint operator $a_0\in C'\cap A_1$ such that $\| a_0\| \le \| m\|$,
\begin{align}\label{1.6.3}
\| (a_0-m) v^*\xi\| <\mu \ \ \text{and} \ \ \| (a_0-m)^* v \xi\| <\mu , \ \ v\in Y, \ \xi\in S.
\end{align}
Define
\[
a:= \frac{1}{|F|}\sum_{v\in F} v a_0 v^*.
\] 
Then, $\| a \| \le \| a_0 \| \le \|m \|$.

For any $x\in X$,
\begin{align}\label{1.6.4}
\begin{split}
\| \tilde{x}a- a\tilde{x}\| 
&= \left\| \frac{1}{|F|} \sum_{i=1}^{N}\sum_{v\in F} c_i^{(x)} u_i v a_0v^*  - 
\frac{1}{|F|} \sum_{i=1}^{N} \sum_{v\in F} c_i^{(x)} v a_0 v^* u_i  \right\| \\
&= \left\| \frac{1}{|F|} \sum_{i=1}^{N} c_i^{(x)} \left(
\sum_{\tilde{v}\in u_i F}  \tilde{v} a_0 \tilde{v}^* u_i
- \sum_{v\in F} v a_0 v^* u_i  \right) \right\| \\
&\le \sum_{i=1}^{N} \frac{| u_i F \bigtriangleup F|}{|F|} <\frac{\varepsilon}{3},
\end{split}
\end{align}
by (\ref{1.6.1}).
For $x\in X$,
since $\| x- \tilde{x}\| <\varepsilon/3$,
\begin{align*}
\| x a - a x\|
&\le \| x a- \tilde{x} a\| + \| \tilde{x}a- a \tilde{x}\| + \| \tilde{x}a- x a\| \\
&\le \frac{\varepsilon}{3} + \frac{\varepsilon}{3}+ \frac{\varepsilon}{3}=\varepsilon
\end{align*}
by (\ref{1.6.4}).

For $\xi \in S$, by (\ref{1.6.2}) and (\ref{1.6.3}),
\begin{align*}
&\| (a-m) \xi\| \\
&\le \left\| \left( \frac{1}{|F|}\sum_{v\in F} v a_0 v^* - \frac{1}{|F|}\sum_{v\in F} v m v^* \right) \xi \right\| 
+ \left\| \left( \frac{1}{|F|}\sum_{v\in F} v m v^* - \frac{1}{|F|}\sum_{v\in F} v v^* m \right) \xi \right\| \\
&\le \max_{v\in F} \|(a_0-m) v^*\xi \| + \max_{v\in F}\| m v^* - v^* m \| \\
&\le \frac{\mu}{2}+ \delta = \mu.
\end{align*}

Similarly, for $\xi\in S$,
\begin{align*}
\| (a-m)^*\xi\|
&\le \max_{v\in F} \| (a_0 -m )^* v \xi\| + \max_{v\in F} \|  m v - v m\| \le \frac{\mu}{2}+ \delta =\mu,
\end{align*}
and the lemma follows.
\end{proof}

By Lemma \ref{1.7} and \ref{1.6}, we obtain the following version of Lemma \ref{1.6} for unitary operators.
We need the next lemma in Section \ref{unitary}.

\begin{lemma}\label{1.8}
Let $C\subseteq A$ be a unital inclusion of non-degenerate $\mathrm{C}^*$-algebras in $\mathbb{B}(H)$.
Suppose that $C\subseteq A$ is crossed product-like by a discrete amenable group $U$.
Then
for any finite set $X\subseteq A_1$, $\varepsilon_0, \mu_0 >0$ and $0<\alpha<2$,
there exist a finite set $Y\subseteq A_1$ and $\delta_0>0$
with the following property$\colon$Given a finite set $S\subseteq H_1$
 and a unitary $u \in \overline{C'\cap A}^{\w}$ with $\| u - I_H\| \le \alpha$ and 
\begin{equation}
\| u y - y u \| \le \delta_0 , \ \ y\in Y,
\end{equation}
there exists a unitary $v\in C'\cap A$ such that $\|  v - I_H \| \le \| u - I_H \|$,
\begin{align}
\| v x- x v \| <\varepsilon_0, \ \ x\in X, 
\end{align}
and
\begin{align}
\| (v-u)\xi\| <\mu_0 \ \ and \ \ \| (v-u)^* \xi\| <\mu_0 , \  \ \xi \in S.
\end{align}
\end{lemma}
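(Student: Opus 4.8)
The plan is to reduce the statement to the self-adjoint version \lemref{1.6} by passing through the functional calculus: given the unitary $u$, write $h := \tfrac{1}{\pi}\arg u$ for the self-adjoint ``logarithm'' with $e^{i\pi h} = u$; apply \lemref{1.6} to $h$ to extract a self-adjoint $a \in C' \cap A_1$ approximately commuting with $X$ and close to $h$ on a prescribed finite set of vectors; and put $v := e^{i\pi a}$. The passages $u \mapsto h$ and $a \mapsto v$ are controlled by \lemref{Arveson}, which transports approximate-commutation estimates through a fixed continuous function, while the closeness $v \approx u$ on $S$ is handled by \lemref{1.7}.

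In detail: fix $X$, $\varepsilon_0$, $\mu_0$ and $\alpha$. Since $\alpha < 2$, the compact set $K := \{ z \in \mathbb{C} : |z - 1| \le \alpha \}$ omits $-1$, so the principal branch $f(z) := \tfrac{1}{\pi}\arg z$ is continuous on $K$ with values in $(-1, 1)$ and with $e^{i\pi f(z)} = z$ whenever $|z| = 1$. Apply \lemref{Arveson} to $t \mapsto e^{i\pi t}$ on $[-1,1]$ with bound $M = 1$ and tolerance $\varepsilon_0$ to obtain $\eta_1 > 0$, and apply \lemref{1.7} to $\mu_0$ to obtain $\mu > 0$. Feed $X$, tolerance $\eta_1$ and tolerance $\mu$ into \lemref{1.6} (applicable since $C \subseteq A$ is non-degenerate and crossed product-like) to obtain the finite set $Y \subseteq A_1$ and $\delta > 0$; this $Y$ is the one claimed. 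Finally apply \lemref{Arveson} to $f$ on $K$ with $M = 1$ and tolerance $\delta$ to obtain $\eta_2 > 0$, and set $\delta_0 := \eta_2 / 2$.

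Now let $S$ and a unitary $u \in \overline{C' \cap A}^{\w}$ with $\| u - I_H \| \le \alpha$ and $\| u y - y u \| \le \delta_0$ for $y \in Y$ be given, and put $h := f(u)$. Since $u$ is a unitary operator, $\mathrm{sp}(u) \subseteq \{ z : |z-1| \le \| u - I_H \| \} \subseteq K$, so $h$ is a self-adjoint contraction with $\| h \| < 1$; as $h$ lies in the unit ball of the von Neumann algebra $\overline{C' \cap A}^{\w}$, the Kaplansky density theorem gives $h \in \overline{C' \cap A_1}^{\w}$, and \lemref{Arveson} gives $\| h y - y h \| < \delta$ for $y \in Y$. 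Apply \lemref{1.7} with $\mu_0$, $S$ and $h$ to get a finite set $S' \subseteq H_1$, then apply the instance of \lemref{1.6} fixed above to $S'$ and $m := h$: this produces a self-adjoint $a \in C' \cap A_1$ with $\| a \| \le \| h \|$, with $\| a x - x a \| < \eta_1$ for $x \in X$, and with $\| (a - h)\xi' \| < \mu$ for $\xi' \in S'$. Set $v := e^{i\pi a}$, a unitary in $C' \cap A$. Then $\| v x - x v \| < \varepsilon_0$ on $X$ by \lemref{Arveson} applied to $a$ (whose spectrum lies in $[-1,1]$); $\| (v - u)\xi \| < \mu_0$ and $\| (v-u)^* \xi \| < \mu_0$ on $S$ by \lemref{1.7} with $k := a$; and, since for self-adjoint $b$ with $\mathrm{sp}(b) \subseteq [-1,1]$ one computes $\| e^{i\pi b} - I_H \| = \sup_{t \in \mathrm{sp}(b)} 2|\sin(\tfrac{\pi}{2} t)| = 2\sin(\tfrac{\pi}{2}\| b \|)$, which is nondecreasing in $\| b \|$, the bound $\| a \| \le \| h \|$ gives $\| v - I_H \| \le \| u - I_H \|$.

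The step I expect to be the main obstacle is the bookkeeping of the quantifiers together with the two uniformity issues it conceals: all of $\eta_1, \mu, Y, \delta, \eta_2, \delta_0$ must be fixed before $S$ and $u$ are revealed, while $S'$ from \lemref{1.7} is legitimately allowed to depend on the later data $S$ and $h$ --- this matches exactly the form of \lemref{1.6} and \lemref{1.7}. The genuinely non-formal points are that the transport $\| u y - y u \| \le \delta_0 \Rightarrow \| h y - y h \| < \delta$ must be uniform over all admissible $u$, which forces the use of the fixed compact set $K$ (rather than $\mathrm{sp}(u)$) in \lemref{Arveson}, and the monotonicity of $b \mapsto \| e^{i\pi b} - I_H \|$ underlying the norm estimate $\| v - I_H \| \le \| u - I_H \|$.
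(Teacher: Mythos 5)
Your argument is essentially the paper's own proof: both pass from $u$ to the self-adjoint logarithm $h=\tfrac{1}{i\pi}\log u$ using Lemma~\ref{Arveson}, feed $h$ into Lemma~\ref{1.6} (with the tolerance $\mu$ supplied in advance by Lemma~\ref{1.7}) to produce a self-adjoint $a\in C'\cap A_1$ with $\|a\|\le\|h\|$, set $v=e^{i\pi a}$, and recover the three conclusions from Lemma~\ref{Arveson}, Lemma~\ref{1.7}, and the monotonicity of $\|e^{i\pi b}-I_H\|$ in $\|b\|$, with the quantifiers ordered exactly as in the paper. The one correction needed is that your compact set $K$ should be the arc $\{z\in\mathbb{T}:|z-1|\le\alpha\}$ rather than the full disk (for $\alpha\ge 1$ the disk contains $0$, where $\arg$ fails to be continuous); since $\mathrm{sp}(u)$ lies on the unit circle this is a purely cosmetic repair.
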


\begin{proof}
Let a finite set $X\subseteq A_1$, $\varepsilon_0, \mu_0 >0$ and $0<\alpha<2$ be given.
There exists $0<c <\pi$ such that $| 1 - e^{i \pi \theta}| \le \alpha$
if and only if $\theta \in [ -c , c]$ modulo $2\pi$.
By Lemma \ref{Arveson},
there exists $\varepsilon>0$ such that 
given a self-adjoint operator $k \in \mathbb{B}(H)_1$ and $a \in \mathbb{B}(H)_1$,
if $\| a k - k a \| <\varepsilon$, then $\| a e^{i\pi k} - e^{i\pi k} a\| <\varepsilon_0$.

By Lemma \ref{1.7},
there exists $\mu>0$ with the following property:
Given a finite set $S\subseteq H_1$ and a self-adjoint operator $k\in \mathbb{B}(H)_1$,
there exists a finite set $S'\subseteq H_1$
such that for a self-adjoint operator $k\in \mathbb{B}(H)_1$,
if 
\begin{align*}
\| (h-k)\xi'\| <\mu_0, \ \ \xi'\in S',
\end{align*}
then
\begin{align*}
\| (e^{i\pi h} - e^{i\pi k}) \xi\| <\mu \ \ \text{and} \ \ \| (e^{i\pi h} - e^{i\pi k})^* \xi\| <\mu, \ \ \xi \in S.
\end{align*}
By Lemma \ref{1.6},
there exist a finite set $Y\subseteq A_1$ and $\delta>0$
with the following property:
For any finite set $S\subseteq H_1$ and self-adjoint operator $m\in \overline{C'\cap A_1}^{\w}$ with
\begin{align*}
\| m y - y m \| < \delta, \ \ y\in Y,
\end{align*}
there exists a self-adjoint operator $a \in C'\cap A_1$ such that $\| a \| \le \| m \|$,
\begin{align*}
\| a x &- x a\| <\varepsilon, \ \ x\in X, \\
\| (a - m) \xi \| <\mu& \ \ \text{and} \ \ \| (a-m)^* \xi\| <\mu , \ \ \xi \in S.
\end{align*}
By Lemma \ref{Arveson},
there exists $\delta_0>0$ such that
for any  $y \in \mathbb{B}(H)$ and unitary $u \in \mathbb{B}(H)$ with $\| u - I_H\| \le \alpha$,
if  $\| u y - y u\| \le \delta_0$, then
\begin{align*}
\left\| \frac{\log u}{\pi} y - y \frac{\log u}{\pi} \right\| \le \delta.
\end{align*}

Given a finite set $S\subset H_1$ and  a unitary $ u\in \overline{C'\cap A}^{\w}$ with $\| u - I_H\| \le \alpha$ and
\begin{align*}
\| u y - y u \| \le \delta_0, \ \ y\in Y.
\end{align*}
Let
\[
h:= -i \frac{\log u}{\pi} \in C'\cap M.
\]
By the definition of $\delta_0$,
\begin{align*}
\|  h y - y h \|  \le \delta, \ \ y\in Y.
\end{align*}
By the definition of $\mu$,
there exists a finite set $S'\subseteq H_1$ such that
for any self-adjoint operator $k \in \mathbb{B}(H)_1$, 
if 
\begin{align*}
\| (h - k) \xi' \| <\mu_0, \ \ \xi'\in S',
\end{align*}
then
\begin{align*}
\| (e^{i\pi h}- e^{i\pi k})\xi \| <\mu \ \ \text{and} \ \ \| (e^{i\pi h} - e^{i\pi k})^* \xi\| <\mu, \ \ \xi\in S.
\end{align*}
By the definitions of $Y$ and $\delta$,
there exists a self-adjoint operator $k\in C'\cap A_1$
such that $\|k\| \le \|h\|$,
\begin{align*}
\| k x &- x k \| <\varepsilon, \ \ x\in X, \\
\| (h - k) \xi' \| <\mu& \ \ \text{and} \ \ \| (h - k)^* \xi' \| <\mu , \ \ \xi' \in S'.
\end{align*}
Define $v:= e^{i\pi k}$.
Then, we have $\| v- I_H\| \le \| e^{i\pi h} - I_H \| = \| u- I_H\| $.

By the definition of $\varepsilon$ and $S'$,
we have
\[
\| v x - x v \| <\varepsilon_0 , \ \ x\in X
\]
and
\[
\| (v - u) \xi \| <\mu_0 \ \ \text{and} \ \ \| (v - u )^* \xi \| <\mu_0 , \ \ \xi \in S.
\]
Hence the lemma is proved.
\end{proof}

%-------------------------------------------------------------

\section{Isomorphisms}\label{isomorphism}

%-------------------------------------------------------------

In this section,
we show Theorem B.
Given a unital inclusion $C\subseteq D$ of C$^*$-algebras and 
intermediate C$^*$-subalgebras $A,B$ for this inclusion 
with a conditional expectation form $D$ onto $B$,
if $A=C\rtimes G$, where $G$ is a discrete amenable group, 
and if $A$ and $B$ are sufficiently close,
then $A$ must be $*$-isomorphic to $B$.
To do this, we modify \cite[Lemma 4.1]{CSSWW} in the next lemma.
The approximation approach of \cite[Lemma 4.1]{CSSWW} inspired by the intertwining arguments of \cite[Theorem 6.1]{Chris5}.

\begin{lemma}\label{3.1}
Let $C\subseteq D$ be a unital inclusion of $\C^*$-algebras and 
let $A,B$ be separable intermediate $\mathrm{C}^*$-subalgebras for $C\subseteq D$ with a conditional expectation $E \colon D\to B$.
Let $\{a_n\}_{n=0}^{\infty}$ be a dense subset in $A_1$ with $a_0=0$.
Suppose that $C\subseteq A$ is crossed product-like by a discrete amenable group
and $d(A,B)<\gamma<10^{-3}$.
Put $\eta:=(8\sqrt{2}+2)\gamma$.
Then
for any finite set $X \subseteq A_1$,
there exist finite subsets $\{X_n\}_{n=0}^{\infty}, \{ Y_n\}_{n=0}^{\infty}\subseteq A_1$, 
positive constants $\{ \delta_n \}_{n=0}^{\infty}$, 
$C$-fixed cpc maps $\{ \theta_n\colon A\to B\}_{n=0}^{\infty}$ and
unitaries $\{u_n\}_{n=1}^{\infty}\subseteq C'\cap B$ with the following conditions$\colon$
\begin{enumerate}
\item[\upshape{(a)}] 
For $n\ge 0$, $\delta_n < \min\{ 2^{-n}, \gamma \}$, $a_n\in X_n\subseteq X_{n+1}$ and $X \subseteq X_1;$

\item[\upshape{(b)}] 
For $n\ge0$ and two unital $C$-fixed $(Y_n,\delta_n)$-approximation $*$-\hspace{0pt}homomorphisms $\phi_1,\phi_2 \colon A\to B$ with 
$\phi_1\approx_{Y_n,2\eta}\phi_2$,
there exists a unitary $u\in C'\cap B$ such that $\mathrm{Ad}(u)\circ \phi_1\approx_{X_n,\gamma/2^n}\phi_2$ and 
$\| u- I\| \le \sqrt{2}(2\eta+\delta_n);$

\item[\upshape{(c)}]
For $n\ge 0$, $X_n\subseteq Y_n;$

\item[\upshape{(d)}]
For $n\ge 0$, $\theta_n$ is a $(Y_n,\delta_n)$-approximation $*$-homomorphism with $\| \theta_n-\id_A\| \le \eta;$

\item[\upshape{(e)}]
For $n\ge 1$, $\mathrm{Ad}(u_n)\circ \theta_n\approx_{X_{n-1},\gamma/2^{n-1}}\theta_{n-1}$ and $\| u_n- I\| \le \sqrt{2}(2\eta+\delta_{n-1})$.
\end{enumerate}
\end{lemma}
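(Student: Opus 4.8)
The plan is to construct the families $\{X_n\}_{n\ge0},\{Y_n\}_{n\ge0}\subseteq A_1$, the positive constants $\{\delta_n\}_{n\ge0}$, the maps $\{\theta_n\}_{n\ge0}$ and the unitaries $\{u_n\}_{n\ge1}$ by recursion on $n$: at each stage one first fixes the finite set $X_n$, then reads off $(Y_n,\delta_n)$ from \lemref{1.5}, then obtains $\theta_n$ from \lemref{1.4}, and finally (for $n\ge1$) obtains $u_n$ from condition (b) applied at the previous level. Two numerical points are settled once and for all. First, since $\gamma<10^{-3}$ we have $2\eta=(16\sqrt2+4)\gamma<1/10$, so the closeness $2\eta$ is admissible in \lemref{1.5}; and since $\gamma<1/4$, \lemref{1.4} applies with the given $\gamma$. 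Second, condition (b) at index $n$ is exactly the conclusion of \lemref{1.5} applied to the finite set $X_n$ with output tolerance $\gamma/2^n$ and input closeness $2\eta$, after replacing the implementing unitary $u$ by $u^*$: conjugating $\phi_1\approx_{X_n,\gamma/2^n}\Ad(u)\circ\phi_2$ by $u^*$ turns it into $\Ad(u^*)\circ\phi_1\approx_{X_n,\gamma/2^n}\phi_2$, one has $\|u^*-I\|=\|u-I\|$, and $u^*\in C'\cap B$ because $C'\cap B$ is a C$^*$-algebra. Throughout, whenever a pair $(Y,\delta)$ is produced by \lemref{1.5} we use the remark following that lemma to enlarge the finite set and shrink the constant without losing the property.

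For the recursion, put $X_0=\{a_0\}=\{0\}$. Apply \lemref{1.5} to $X_0$ with tolerance $\gamma$, and after enlarging the finite set to contain $X_0$ and shrinking the constant below $\min\{1,\gamma\}=\gamma$, call the result $(Y_0,\delta_0)$; then (a), (c) and (b) hold at $n=0$. Apply \lemref{1.4} (for the given $C\subseteq A$, $E$, $\gamma$) to $Y_0$ with tolerance $\delta_0$ to get a unital $C$-fixed $(Y_0,\delta_0)$-approximate $*$-homomorphism $\theta_0\colon A\to B$ with $\|\theta_0-\id_A\|\le\eta$, which is (d) at $n=0$; no $u_0$ is needed. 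Now suppose the data have been built through index $n-1$ so that (a)--(e) hold there. Choose a finite $X_n\subseteq A_1$ with $a_n\in X_n$, $X_{n-1}\cup Y_{n-1}\subseteq X_n$, and $X\subseteq X_1$ when $n=1$. Apply \lemref{1.5} to $X_n$ with tolerance $\gamma/2^n$, enlarge the finite set to contain $X_n$ and shrink the constant below $\min\{2^{-n},\gamma,\delta_{n-1}\}$, and call the result $(Y_n,\delta_n)$; this gives (c) at $n$, the relations $\delta_n<\min\{2^{-n},\gamma\}$, $a_n\in X_n$, $X_{n-1}\subseteq X_n$ needed for (a), condition (b) at $n$ by the observation above, and in addition $Y_{n-1}\subseteq X_n\subseteq Y_n$ and $\delta_n<\delta_{n-1}$. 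Apply \lemref{1.4} to $Y_n$ with tolerance $\delta_n$ to get a unital $C$-fixed $(Y_n,\delta_n)$-approximate $*$-homomorphism $\theta_n\colon A\to B$ with $\|\theta_n-\id_A\|\le\eta$, which is (d) at $n$. Since $Y_{n-1}\subseteq Y_n$ and $\delta_n\le\delta_{n-1}$, $\theta_n$ is in particular a unital $C$-fixed $(Y_{n-1},\delta_{n-1})$-approximate $*$-homomorphism, and $\|\theta_n-\id_A\|,\|\theta_{n-1}-\id_A\|\le\eta$ give $\theta_n\approx_{Y_{n-1},2\eta}\theta_{n-1}$; as $2\eta<1/10$, condition (b) at index $n-1$ applies with $\phi_1=\theta_n$, $\phi_2=\theta_{n-1}$ and produces a unitary $u_n\in C'\cap B$ with $\Ad(u_n)\circ\theta_n\approx_{X_{n-1},\gamma/2^{n-1}}\theta_{n-1}$ and $\|u_n-I\|\le\sqrt2(2\eta+\delta_{n-1})$, which is (e) at $n$. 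This closes the induction.

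The argument has no genuinely hard step: all the work is in \lemref{1.4} and \lemref{1.5}, and what remains is bookkeeping. The points that need attention are (i) the order in which $X_n$, then $(Y_n,\delta_n)$, then $\theta_n$, then $u_n$ are chosen; (ii) building in the monotonicity $Y_{n-1}\subseteq X_n\subseteq Y_n$ and $\delta_n\le\delta_{n-1}$ so that the $(Y_n,\delta_n)$-approximate $*$-homomorphism $\theta_n$ can legitimately be treated as a $(Y_{n-1},\delta_{n-1})$-approximate $*$-homomorphism when it is fed into condition (b) at the previous level; and (iii) remembering that condition (b) is \lemref{1.5} with the implementing unitary replaced by its adjoint.
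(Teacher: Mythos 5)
Your proposal is correct and follows essentially the same inductive construction as the paper's proof: at each stage one fixes $X_n$, obtains $(Y_n,\delta_n)$ from \lemref{1.5}, obtains $\theta_n$ from \lemref{1.4}, and produces $u_n$ by feeding $\theta_n$ and $\theta_{n-1}$ into condition (b) at level $n-1$. The only cosmetic differences are at the base case (the paper takes $\theta_0=E|_A$ with $Y_0=\{0\}$ rather than invoking \lemref{1.4} and \lemref{1.5} at level $0$) and in your explicit handling of the adjoint of the unitary from \lemref{1.5}, a detail the paper leaves implicit.
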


\begin{proof}
We prove this lemma by the induction.
Let a finite subset $X\subseteq A_1$ be given.
Let $X_0=\{ 0 \}=\{ a_0\}=Y_0$, $\delta=1$ and $\theta:=E|_A \colon A\to B$.

Suppose that we can construct completely up to the $n$-th stage.
We will write the condition (a) for $n$ as (a)$_n$.
Let $X_{n+1}:= X_n\cup X \cup \{a_{n+1} \} \cup Y_n$.
By Lemma \ref{1.5}, 
there exist a finite set $Y_{n+1}\subseteq A_1$ and 
$0<\delta_{n+1} < \min\{ \delta_n, 2^{-(n+1)}, \gamma \}$
satisfying condition (b)$_{n+1}$ and $X_{n+1}\subseteq Y_{n+1}$.
By Lemma \ref{1.4},
there exists a unital $C$-fixed $(Y_{n+1},\delta_{n+1})$-approximation $*$-\hspace{0pt}homomorphism
$\theta_{n+1}\colon A\to B$ such that $\| \theta_{n+1}- \id_A\| \le \eta$.
Therefore, $X_{n+1}, Y_{n+1}, \delta_{n+1}$ and $\theta_{n+1}$ satisfy (a)$_{n+1}$, (b)$_{n+1}$, (c)$_{n+1}$ and (d)$_{n+1}$.

Since $Y_n\subseteq Y_{n+1}$ and $\delta_{n+1}<\delta_n$,
$\theta_n$ and $\theta_{n+1}$ are unital $C$-fixed $(Y_n,\delta_n)$-approximation $*$-homomorphisms with $\| \theta_n- \theta_{n+1} \| \le 2\eta$.
Thus, by (b)$_n$, there exists a unitary $u_{n+1}\in C'\cap B$ such that 
$\mathrm{Ad}(u_{n+1})\circ \theta_{n+1}\approx_{X_n, \gamma/2^n}\theta_n$ and 
$\| u_{n+1}- I\|\le \sqrt{2}(2\eta +\delta_n)$.
Then (e)$_{n+1}$ follows.
\end{proof}

\begin{prop}\label{3.2}
Let $C\subseteq D$ be a unital inclusion of $\mathrm{C}^*$-algebras and 
let $A$ and $B$ be separable intermediate $\mathrm{C}^*$-subalgebras for $C\subseteq D$ with a conditional expectation $E\colon D\to B$.
Suppose that $C\subseteq A$ is crossed product-like by a discrete amenable group
and $d(A,B)<\gamma<10^{-3}$.
Then
for any finite subset $X\subseteq A_1$,
there exists a $C$-fixed surjective $*$-isomorphism $\alpha\colon A\to B$ such that 
\[
\alpha\approx_{X, 15\gamma} \id_A.
\]
\end{prop}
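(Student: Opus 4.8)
The plan is to telescope the maps $\theta_n$ produced by \lemref{3.1} along the unitaries $u_n$ into a single isomorphism, following the Elliott-type intertwining of \cite[Lemma~4.1]{CSSWW}. First I would apply \lemref{3.1} to the given finite set $X$, getting $\{X_n\}$, $\{Y_n\}$, $\{\delta_n\}$, the $C$-fixed cpc maps $\{\theta_n\}$ and the unitaries $\{u_n\}\subseteq C'\cap B$; while doing so I would take the dense sequence $\{a_n\}\subseteq A_1$ to include, for each member of a fixed dense sequence of $B_1$, some element of $A_1$ within $\gamma$ of it (harmless, and useful for the last step). Put $W_1:=I$ and $W_n:=u_2u_3\cdots u_n\in C'\cap B$ for $n\ge2$, and set $\psi_n:=\Ad(W_n)\circ\theta_n\colon A\to B$. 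Condition~(e) gives $\Ad(u_{n+1})\circ\theta_{n+1}\approx_{X_n,\gamma/2^n}\theta_n$, and applying the isometric automorphism $\Ad(W_n)$ yields $\|\psi_{n+1}(x)-\psi_n(x)\|\le\gamma/2^n$ for $x\in X_n$. Since $a_k\in X_k\subseteq X_n$ for $n\ge k$ by~(a), each sequence $(\psi_n(a_k))_n$ is norm-Cauchy; as the $\psi_n$ are contractive and $\{a_k\}$ is dense in $A_1$, the $\psi_n$ converge point-norm to a completely positive contraction $\alpha\colon A\to B$.

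Next I would verify the properties of $\alpha$. It is multiplicative: by (c) and (d) each $\theta_n$ is a $(Y_n,\delta_n)$-approximate $*$-homomorphism and $a_k\in X_k\subseteq Y_n$ for $n\ge k$, so \propref{1/2} gives $\|\theta_n(a_ky)-\theta_n(a_k)\theta_n(y)\|\le\delta_n^{1/2}\|y\|$ for every $y\in A$; conjugation by the unitary $W_n$ preserves this, and letting $n\to\infty$ (with $\delta_n\to0$ and point-norm convergence) gives $\alpha(a_ky)=\alpha(a_k)\alpha(y)$, hence multiplicativity on all of $A$ by density and continuity. It is $C$-fixed and unital because $\psi_n(c)=W_n\theta_n(c)W_n^*=W_ncW_n^*=c$ for $c\in C$, the last equality since $W_n\in C'$. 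It is injective: (d) gives $\|\theta_n(a)\|\ge(1-\eta)\|a\|$ with $\eta:=(8\sqrt2+2)\gamma<1$, so $\|\alpha(a)\|=\lim_n\|\theta_n(a)\|\ge(1-\eta)\|a\|$. Finally, telescoping from $\psi_1=\theta_1$ and using $X\subseteq X_1\subseteq X_n$ gives $\|\alpha(x)-\theta_1(x)\|\le\sum_{n\ge1}\gamma/2^n=\gamma$ for $x\in X$, while (d) gives $\|\theta_1(x)-x\|\le\eta$; hence $\|\alpha(x)-x\|\le\gamma+\eta=(8\sqrt2+3)\gamma<15\gamma$, i.e.\ $\alpha\approx_{X,15\gamma}\id_A$.

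The remaining point, surjectivity, is the step I expect to be the main obstacle. I would aim to prove $B\subset_1\alpha(A)$ and then apply \propref{surjective} to the inclusion $\alpha(A)\subseteq B$. The useful observation is that the $\theta_n$ are close not merely to $\id_A$ but to the \emph{surjection} $E|_A$: inspecting the proof of \lemref{1.4} one reads off $\|\theta_n-E|_A\|\le8\sqrt2\,\gamma$, and since $E$ is a contraction fixing $B$ every $b\in B_1$ lies within $\gamma$ of $E(A_1)$; therefore every $b\in B_1$ lies within $(8\sqrt2+1)\gamma$ of $\theta_n(A_1)$, and conjugating by $W_n\in B$ (so that $W_nB_1W_n^*=B_1$) shows that every $b\in B_1$ lies within $(8\sqrt2+1)\gamma$ of $\psi_n(A_1)$, for every $n$. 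The difficulty is to pass from this uniform approximate solvability for the $\psi_n$ to one for the limit $\alpha$: since $A_1$ is not norm-compact, $\psi_n(A_1)$ need not converge to $\alpha(A_1)$, so one cannot simply take limits. This is exactly the place where the careful bookkeeping of the intertwining argument is required — comparing the images through the maps $\Ad(W_m^*)\circ\alpha$, for which $\|\Ad(W_m^*)\circ\alpha(x)-x\|\le\eta+\gamma/2^{m-1}$ on $X_m$, and exploiting that the $X_m$ exhaust $A_1$ together with the approximants to a dense subset of $B_1$ placed into them — and it is carried out as in \cite[Lemma~4.1]{CSSWW}, now in the $C$-fixed framework provided by \lemref{3.1}. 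Granting $B\subset_1\alpha(A)$, \propref{surjective} gives $\alpha(A)=B$, and $\alpha$ is the desired $C$-fixed surjective $*$-isomorphism.
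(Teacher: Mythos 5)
Everything you do up to surjectivity is correct and is essentially the paper's own argument: the paper sets $\alpha_n:=\Ad(u_1\cdots u_n)\circ\theta_n$ (your $\psi_n$, up to the harmless extra $u_1$), derives the Cauchy estimate from condition (e) of Lemma \ref{3.1}, gets multiplicativity from $\delta_n\to 0$ together with the density of $\bigcup_n Y_n$, gets injectivity from $\|\theta_n-\id_A\|\le\eta$, and obtains $\|\alpha(x)-x\|\le\gamma+\eta<15\gamma$ on $X$ exactly as you do.

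The genuine gap is surjectivity, which you explicitly leave open (``granting $B\subset_1\alpha(A)$''), and the detour through $\|\theta_n-E|_A\|\le 8\sqrt2\,\gamma$ is not what closes it. The paper's device is to conjugate $b$ \emph{back} by the accumulated unitary before approximating inside $A$: since $u_1,\dots,u_n\in B$, the element $u_n^*\cdots u_1^*\,b\,u_1\cdots u_n$ lies in $B_1$, so there is $x\in A_1$ with $\|x-u_n^*\cdots u_1^*\,b\,u_1\cdots u_n\|<\gamma$, whence
\[
\|\alpha_n(x)-b\|=\|\theta_n(x)-u_n^*\cdots u_1^*\,b\,u_1\cdots u_n\|\le\|\theta_n(x)-x\|+\gamma\le\eta+\gamma<1 ,
\]
and the paper concludes $d(\alpha(A),B)<1$ and applies Proposition \ref{surjective}. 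The comparison map here is $\id_A$, not $E|_A$: the point is that $\theta_n(A_1)$ is uniformly $\eta$-close to $A_1$, which after conjugation is $\gamma$-close to $B_1$. You are right that one must still replace $\alpha_n(x)$ by $\alpha(x)$, and the paper passes over this silently; the standard repair (carried out explicitly by the paper in the analogous Lemma \ref{4.2}, condition (6)) is to enlarge $X_{n+1}$ during the induction of Lemma \ref{3.1} --- at the stage where $u_1,\dots,u_n$ are already known --- so that it contains, for each $b_k$ ($k\le n+1$) in a fixed dense sequence of $B_1$, some $x\in A_1$ within $\gamma$ of $u_n^*\cdots u_1^*\,b_k\,u_1\cdots u_n$; then the telescoping estimate controls $\|\alpha(x)-\alpha_{n+1}(x)\|$ and one gets $\|\alpha(x)-b_k\|\le\eta+\gamma+2\|u_{n+1}-I\|+\gamma/2^{n}<1$ for $\gamma<10^{-3}$. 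Your idea of seeding approximants of a dense subset of $B_1$ into the sequence $\{a_n\}$ in advance does not suffice, because the elements that must be approximated are the conjugates $u_n^*\cdots u_1^*\,b_k\,u_1\cdots u_n$, which are only available once the unitaries have been constructed.
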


\begin{proof}
Let $\{a_n\}_{n=0}^{\infty}$ be a dense subset in $A_1$ with $a_0=0$.
Put $\eta:=(8\sqrt{2}+2)\gamma$.
By Lemma \ref{3.1},
we can construct $\{X_n\}_{n=0}^{\infty}, \{Y_n\}_{n=0}^{\infty}\subseteq A_1$, 
$\{\delta_n \}_{n=0}^{\infty}$,
$\{ \theta_n\colon A\to B\}_{n=0}^{\infty}$ and $\{u_n\}_{n=1}^{\infty}\subseteq C'\cap B$ which satisfy conditions (a)-(e) of that lemma.
For any $n\ge 1$, define
\[
\alpha_n:= \mathrm{Ad}(u_1\cdots u_n)\circ \theta_n.
\]
Fix $k\in \mathbb{N}$ and $x\in X_k$.
For any $n\ge k$,
\begin{align}\label{3.2.1}
\begin{split}
\| \alpha_{n+1}(x)-\alpha_n(x) \| 
&=\| \left( \mathrm{Ad}(u_1\cdots u_n)\circ \mathrm{Ad} (u_{n+1}) \circ \theta_{n+1} - \mathrm{Ad}(u_1\cdots u_n) \circ \theta_n \right)(x) \| \\
&= \| \left( \mathrm{Ad}(u_{n+1}) \circ \theta_{n+1} - \theta_n \right) (x) \| 
\le \frac{\gamma}{2^n}
\end{split}
\end{align}
For any $\varepsilon>0$, there exists $N\ge k$ such that $\gamma/2^{N-1} < \varepsilon$.
For any two natural numbers $m \ge n\ge N$, by (\ref{3.2.1}),
\[
\| \alpha_m(x) - \alpha_n(x) \| 
\le \sum_{j=n}^{m-1} \| \alpha_{j+1}(x) -\alpha_j(x) \| \le \sum_{j=n}^{m-1} \frac{\gamma}{2^j} <\frac{\gamma}{2^{N-1}} <\varepsilon.
\]
Thus,
$\{ \alpha_n(x) \}$ is a cauchy sequence.
Since $\bigcup_{n=0}^{\infty} X_n$ is dense in $A_1$, 
the sequence $\{ \alpha_n \}$ converges in the point-norm topology to a $C$-fixed cpc map $\alpha\colon A\to B$.
Moreover, $\alpha$ is a $*$-homomorphism,
since $\lim_{n\to \infty}\delta_n=0$ and 
$\bigcup_{n=0}^{\infty} Y_n$ is dense in $A_1$.

For any $n\in \mathbb{N}$ and $x\in X_n$,
\begin{align}\label{3.2.2}
\begin{split}
\| \alpha(x) - \alpha_n(x) \| 
\le \sum_{j=n}^{\infty} \| \alpha_{j+1}(x) - \alpha_j(x) \| 
\le \sum_{j=n}^{\infty} \frac{\gamma}{2^j} = \frac{\gamma}{2^{n-1}}.
\end{split}
\end{align}
Hence, by (e) in Lemma \ref{3.1},
\begin{align}\label{3.2.3}
\begin{split}
\| \alpha(x) \| 
&\ge \left| \| \alpha(x) - \alpha_n(x) \| - \| \alpha_n(x) \| \right| \\
&\ge \| \alpha_n(x)\| - \frac{\gamma}{2^{n-1}} \\
&= \| \theta_n(x) \| - \frac{\gamma}{2^{n-1}} \\
&\ge | \| \theta_n(x) - x \| - \| x \| | - \frac{\gamma}{2^{n-1}} \\
&\ge \| x \| - \eta - \frac{\gamma}{2^{n-1}}.
\end{split}
\end{align}
Let $n\to \infty$ in (\ref{3.2.3}).
Then, for any $x$ in the unit sphere of $A$, we have
\begin{align}
\| \alpha(x) \| \ge 1-\eta >0,
\end{align}
by the density of $\bigcup_{n=0}^{\infty}X_n$ in $A_1$.
Therefore, $\alpha$ is an injective map.

For any $b\in B_1$ and $n\in \mathbb{N}$, there exists $x\in A_1$ such that $\| x- u_n^* \cdots u_1^* b u_1 \cdots u_n \| <\gamma$.
\begin{align*}
\| \alpha_n(x) - b_j \| 
&= \| u_1\cdots u_n \theta_n(x) u_n^* \cdots u_1^* - b \| \\
&\le \| \theta_n(x) -x \| + \| x - u_n^* \cdots u_1^* b u_1 \cdots u_n \| \\
&< \eta + \gamma <1.
\end{align*}
Thus, $d(\alpha(A),B)<1$, that is, $\alpha$ is a surjective map by Proposition \ref{surjective}.

For any $x\in X$,
\begin{align*}
\| \alpha(x) - x \| 
\le \| \alpha(x) - \alpha_1(x) \| + \| \theta_1(x) - x \| \le \gamma +\eta <15 \gamma.
\end{align*}
by (\ref{3.2.3}) and  (e) in Lemma \ref{3.1}.
\end{proof}

\begin{thm}\label{3.3}
Let $C\subseteq D$ be a unital inclusion of  $\mathrm{C}^*$-algebras  and 
let $A$ and $B$ be separable intermediate $\mathrm{C}^*$-subalgebras for $C\subseteq D$ with a conditional expectation $E\colon D\to B$.
Suppose that $C\subseteq A$ is crossed product-like by a discrete amenable group
and $d(A,B)<\gamma<10^{-3}$.
Then
for any finite subset $X\subseteq A_1$ and finite set $Y\subseteq B_1$,
there exists a $C$-fixed surjective $*$-isomorphism $\alpha \colon A\to B$ such that 
\[
\alpha\approx_{X, 15\gamma} \id_A \ \  and \ \ \alpha^{-1} \approx_{Y, 17\gamma} \id_B.
\]
\end{thm}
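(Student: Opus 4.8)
The plan is to bootstrap Theorem \ref{3.3} from Proposition \ref{3.2}, whose only deficiency is that it controls $\alpha$ only on a prescribed finite set $X\subseteq A_1$, with no control over $\alpha^{-1}$ on a prescribed finite set $Y\subseteq B_1$. Since $d(A,B)<\gamma$, each $y\in Y$ admits an element $a_y\in A_1$ with $\|y-a_y\|<\gamma$; enlarge the finite set on which we apply Proposition \ref{3.2} to $X':=X\cup\{a_y:y\in Y\}$. This produces a $C$-fixed surjective $*$-isomorphism $\alpha\colon A\to B$ with $\alpha\approx_{X',15\gamma}\id_A$, which in particular gives $\alpha\approx_{X,15\gamma}\id_A$, the first required estimate.

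For the second estimate, I would estimate $\|\alpha^{-1}(y)-y\|$ for $y\in Y$ by inserting $\alpha^{-1}(a_y)$: we have
\begin{align*}
\|\alpha^{-1}(y)-y\|
&\le \|\alpha^{-1}(y)-\alpha^{-1}(a_y)\|+\|\alpha^{-1}(a_y)-a_y\|+\|a_y-y\|\\
&\le \|y-a_y\|+\|\alpha(a_y)-a_y\|+\|a_y-y\|\\
&< \gamma+15\gamma+\gamma = 17\gamma,
\end{align*}
where the first inequality uses that $\alpha^{-1}$ is contractive, the second uses that $\alpha^{-1}$ is contractive again on $\|\alpha^{-1}(a_y)-a_y\|=\|a_y-\alpha(a_y)\|$ (apply $\alpha^{-1}$ to $\alpha(a_y)-a_y\in B$ and note $\|\alpha^{-1}(\alpha(a_y)-a_y)\|\le\|\alpha(a_y)-a_y\|$), and the last uses $a_y\in X'$ together with $\alpha\approx_{X',15\gamma}\id_A$. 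Thus $\alpha^{-1}\approx_{Y,17\gamma}\id_B$, as desired.

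I do not expect any serious obstacle here: this is a routine "two-sided" refinement of Proposition \ref{3.2} obtained purely by enlarging the finite test set and using the contractivity of $\alpha$ and $\alpha^{-1}$ (which holds since $\alpha$ is a surjective $*$-isomorphism between C$^*$-algebras, hence isometric). The only point requiring minimal care is bookkeeping of the constant: one should double-check that $15\gamma+2\gamma=17\gamma$ is the clean bound claimed, and that $\gamma<10^{-3}$ keeps everything well inside the range where Proposition \ref{3.2} applies (in particular $15\gamma<1$, so the inserted approximants remain harmless). Since Proposition \ref{3.2} already delivers the genuine isomorphism with the norm bound on $X'$, no new analytic input is needed — only the elementary triangle-inequality manipulation above.
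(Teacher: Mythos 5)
Your approach is exactly the paper's: enlarge the test set for Proposition \ref{3.2} by a finite set of $\gamma$-approximants in $A_1$ of the elements of $Y$, and then recover the bound on $\alpha^{-1}$ from the isometry of $\alpha$; the constants $15\gamma$ and $\gamma+15\gamma+\gamma=17\gamma$ come out identically. There is, however, one step in your chain that does not literally parse: you apply $\alpha^{-1}$ to $a_y$ (and to $\alpha(a_y)-a_y$), but $\alpha^{-1}$ is only defined on $B$, and $a_y\in A_1$ need not lie in $B$ (indeed $A\neq B$ is the whole point), so $\alpha^{-1}(a_y)$ and $\alpha^{-1}(\alpha(a_y)-a_y)$ are not defined. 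The repair is just a regrouping, and it is what the paper does: write $\|\alpha^{-1}(y)-y\|\le\|\alpha^{-1}(y)-a_y\|+\|a_y-y\|$, note that $\|\alpha^{-1}(y)-a_y\|=\|\alpha^{-1}(y)-\alpha^{-1}(\alpha(a_y))\|=\|y-\alpha(a_y)\|$ since $\alpha(a_y)\in B$ and $\alpha^{-1}$ is isometric, and then bound $\|y-\alpha(a_y)\|\le\|y-a_y\|+\|a_y-\alpha(a_y)\|\le\gamma+15\gamma$. With that one-line correction your argument coincides with the paper's proof.
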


\begin{proof}
There exists a finite set $\tilde{X}\subseteq A_1$ such that $\tilde{X}\subset_{\gamma} Y$.
By Proposition \ref{3.2},
there exists a $C$-fixed surjective $*$-isomorphism $\alpha\colon A\to B$
such that
\[
\alpha \approx_{X\cup \tilde{X}, 15\gamma} \id_A.
\]
Fix $y\in Y$.
Since $\tilde{X}\subset_{\gamma}Y$, there exists $\tilde{x}\in \tilde{X}$ such that $\| \tilde{x} - y \| <\gamma$.
Then, we have 
\begin{align*}
\| \alpha^{-1}(y) - y\| 
&\le \| \alpha^{-1}(y) - \tilde{x}\| + \| \tilde{x} - y\| \\
&\le \| y- \alpha(\tilde{x}) \| +  \gamma \\
&\le \| y - \tilde{x} \| + \| \tilde{x} - \alpha(\tilde{x}) \| +\gamma \\
&\le \gamma + 15\gamma + \gamma \le 17 \gamma.
\end{align*}
Therefore, $\alpha^{-1} \approx_{Y,17\gamma} \id_B$.
\end{proof}

%%%%%%%%%%%%%%%%%%%%%%%%%%%%%%%%%%%%%%

\section{Crossed product von Neumann algebras by amenable groups}\label{von}

%%%%%%%%%%%%%%%%%%%%%%%%%%%%%%%%%%%%%%

In Theorem \ref{2.3.5}, we now show that
given a unital inclusion $N\subseteq M$ of von Neumann algebras and 
intermediate von Neumann subalgebras $A,B$ for this inclusion 
with a normal conditional expectation form $M$ onto $B$,
if $A=N\rtimes G$, where $G$ is a discrete amenable group, 
and if $A$ and $B$ are sufficiently close,
then there exists a unitary $u \in N' \cap M$ such that $u A u^* = B$.
This unitary can be chosen to be close to the identity.

\begin{defn}\upshape
Let $N\subseteq M$ be a unital  inclusion of von Neumann algebras in $\mathbb{B}(H)$.
Then we call the inclusion $N\subseteq M$ is {\it crossed product-like}
if 
there exists a discrete group $U$ in $\mathcal{N}_M(N)$ such that
$M= (N \cup U)''$.
\end{defn}

\begin{exam}\upshape
Let $G$ be a discrete amenable group acting on a von Neumann algebra $N$.
Then an  inclusion $N\subseteq N\rtimes G$ is crossed product-like by $\{\lambda_g\}_{g\in G}$.
\end{exam}

\begin{exam}\upshape
Let $A\subseteq B$ be a crossed product-like inclusion of $\C^*$-algebras
acting non-degenerately on $H$.
Then an  inclusion $\bar{A}^{\w}\subseteq \bar{B}^{\w}$ of von Neumann algebras is crossed product-like.
\end{exam}

\begin{rem}\upshape
Let $N\subseteq M$ be a crossed product-like inclusion of von Neumann algebras in $\mathbb{B}(H)$
by a discrete amenable group $U\subseteq \mathcal{N}_M(N)$.
Then there is a left-invariant mean $m\colon \ell^{\infty}(U)\to \mathbb{R}$ with 
a net of finite subsets $\{ F_{\mu} \} \subseteq U$ such that 
\[
\lim_{\mu} \frac{1}{|F_{\mu}| } \sum_{g\in F_{\mu}} f(g) = m(f) , \ \ f\in \ell^{\infty}(U).
\]
Given a linear bounded map $\phi\colon U\to \mathbb{B}(H)$.
For $\xi,\eta\in H$, define $\phi_{\xi,\eta}\in \ell^{\infty}(U)$ by
\[
\phi_{\xi,\eta}(u)=\langle \phi(u) \xi, \eta \rangle, \ \ u\in U.
\]
Then there is an operator $T_{\phi}\in \mathbb{B}(H)$ which we will often write in the form 
\[
T_{\phi}= \int_{u\in U} \phi(u) d m
\]
 such that
\[
\langle T_{\phi}\xi,\eta \rangle = m( \phi_{\xi,\eta} )= \int_{u\in U} \langle \phi(u) \xi,\eta \rangle d m , \ \ \xi,\eta\in H.
\]
By the construction of $m$, we have 
\[
\langle T_{\phi} \xi, \eta \rangle = \lim_{\mu}  \frac{1}{|F_{\mu}|} \sum_{u\in F_{\mu}} \left\langle\phi(u) \xi,\eta \right\rangle, \ \ \xi,\eta\in H,
\]
that is, $T_{\phi}\in \overline{\mathrm{conv}}^{\mathrm{w}} \{ \phi(u) : u \in U \}$.
Furthermore,
\begin{align*}
\| T_{\phi} \|
&= \sup_{\xi,\eta\in H} \left| \int_{u\in U} \langle \phi(u) \xi,\eta \rangle d m \right| \\
&\le  \int_{u\in U} \sup_{\xi,\eta\in H} \left| \langle \phi(u) \xi,\eta \rangle \right| d m 
=\int_{u\in U} \| \phi(u) \| d m.
\end{align*}
\end{rem}

In the next lemma,
we shall find a unital normal $*$-homomorphism between von Neumann algebras.
This lemma is originated in Christensen's work \cite[Lemma 3.3]{Chris2},
 which discusses the perturbation theory for injective von Neumann algebras.

\begin{lemma}\label{2.2}
Let $N\subseteq M$ be an inclusion of von Neumann algebras in $\mathbb{B}(H)$
and 
let $A,B$ be intermediate von Neumann subalgebras for $N\subseteq M$
with a normal conditional expectation $E \colon M\to B$. 
Suppose that $N\subseteq A$ is crossed product-like by a discrete amenable group $U$ 
and $d(A,B)<\gamma<1/4$.
Then
there exists a unital $N$-fixed normal $*$-homomorphism $\Phi \colon A \to B$ 
such that
\[
\| \Phi -\id_A \| \le ( 8 \sqrt{2}+2)\gamma .
\]
\end{lemma}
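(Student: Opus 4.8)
The plan is to adapt the Stinespring-dilation averaging argument of \lemref{1.4} to the von Neumann setting, replacing the F\o lner-set finite averages by the invariant-mean integral $T_\phi=\int_{u\in U}\phi(u)\,dm$ introduced in the preceding remark, so that the resulting averaged element $t$ lies in $N'$ and its spectral projection yields a genuine (weakly continuous) $*$-homomorphism rather than an approximate one. First I would take a normal conditional expectation $E\colon M\to B$, realize it by a normal Stinespring dilation: choose a Hilbert space $K\supseteq H$ and a \emph{normal} unital $*$-homomorphism $\pi\colon M\to\mathbb B(K)$ with $E(m)=P_H\pi(m)|_H$ and, since $E$ is $B$-fixed, $P_H\in\pi(B)'$. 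As in \lemref{1.4}, for each $v\in U$ pick $v'\in B_1$ with $\|v-v'\|<\gamma$, so that $\|\pi(v)P_H-P_H\pi(v)\|\le 2\gamma$.

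Next, set $t:=\int_{v\in U}\pi(v)P_H\pi(v^*)\,dm\in\mathbb B(K)$, using that $v\mapsto\pi(v)P_H\pi(v^*)$ is a bounded map into $\mathbb B(K)_1$. By left-invariance of $m$ one gets $t\in\pi(N)'$: for $u\in U$, $u\mapsto \pi(u)t\pi(u^*)$ is computed by a change of variable $v\mapsto uv$ in the mean, which leaves the integral unchanged; and $\pi(N)$ is generated, together with $U$, by $A=C^*(N,U)$-style density, so commutation with all $\pi(u)$ plus the fact that $t$ visibly commutes with $\pi(N)\cap\pi(A)=\pi(N)$ (here $P_H\in\pi(N)'$ since $E$ is $N$-fixed, hence each $\pi(v)P_H\pi(v^*)$ and the mean of them commutes with $\pi(N)$) gives $t\in\pi(A)'$, in particular $t\in\pi(N)'$. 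Moreover $\|t-P_H\|\le 2\gamma$ exactly as in \lemref{1.4}, so $\mathrm{sp}(t)\subseteq[0,2\gamma]\cup[1-2\gamma,1]$ and $q:=\chi_{[1-2\gamma,1]}(t)\in C^*(t,I_K)\subseteq\pi(A)'$ is a projection with $\|q-P_H\|\le 2\|t-P_H\|<1$. By \lemref{polar}(3) there is a unitary $w\in C^*(t,P_H,I_K)\subseteq C^*(\pi(A),P_H)$ with $wP_Hw^*=q$ and $\|w-I_K\|\le\sqrt2\|q-P_H\|\le 4\sqrt2\,\gamma$. Define $\Phi\colon A\to\mathbb B(K)$ by $\Phi(a):=P_Hw^*\pi(a)w|_H$. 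Since $w$ commutes with $\pi(A)$ one has $\Phi(a)=P_Hw^*w\pi(a)|_H=E(a)$ — wait, more carefully: $w\in C^*(t,P_H,I_K)$ need not commute with $\pi(A)$, only with $\pi(N)$; but $w$ does commute with $t$, hence with $q$, and $P_Hw^*=P_Hw^*q$, so the standard computation gives that $\Phi$ is multiplicative: $\Phi(ab)=P_Hw^*\pi(ab)wP_H=P_Hw^*q\pi(a)q\pi(b)wP_H$ and, crucially, here $q$ genuinely commutes with $\pi(A)$ because $q\in\pi(A)'$ (this is the payoff of the \emph{exact} invariant mean rather than an approximate F\o lner average), so $q\pi(a)q=q\pi(a)=\pi(a)q$ and multiplicativity follows. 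Normality of $\Phi$ follows from normality of $\pi$ and compression by the fixed projections. That $\Phi$ is $N$-fixed is immediate since $w\in\pi(N)'$, and the range lands in $B$ because $P_H\pi(A)|_H=\mathrm{ran}(E)\subseteq B$ and $w\in C^*(\pi(A),P_H)$ acts within the appropriate algebra.

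For the norm estimate, exactly as at the end of \lemref{1.4}, for $a\in A_1$,
\[
\|\Phi(a)-a\|\le\|P_Hw^*\pi(a)wP_H-P_H\pi(a)P_H\|+\|E(a)-a\|\le 2\|w-I_K\|+2d(A,B)\le(8\sqrt2+2)\gamma,
\]
giving the claim. The main obstacle I anticipate is making rigorous that $t$ (and hence $q$) lies in $\pi(A)'$ rather than merely approximately commuting with $\pi(A)$: the change-of-variables identity for the invariant mean must be justified weakly, entrywise, via $\langle\pi(u)t\pi(u^*)\xi,\eta\rangle=\int_{v\in U}\langle\pi(uv)P_H\pi((uv)^*)\xi,\eta\rangle\,dm=\langle t\xi,\eta\rangle$ using left-invariance of $m$ applied to the function $v\mapsto\langle\pi(uv)P_H\pi(v^*u^*)\xi,\eta\rangle$, and then one must check that commuting with all of $U$ together with commuting with $\pi(N)$ (which holds since $P_H\in\pi(N)'$ forces each integrand, hence $t$, into $\pi(N)'$) yields commutation with $C^*(N,U)$ and therefore, by normality and weak density, with $\pi(A)=\pi(\overline{N\cup U}^{\w})''$. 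Once $t\in\pi(A)'$ is secured, the continuous functional calculus keeps $q\in\pi(A)'$ and the rest is the verbatim computation from \lemref{1.4} with the approximate inequalities replaced by exact identities.
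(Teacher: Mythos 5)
Your proposal is correct and follows essentially the same route as the paper: a normal Stinespring dilation of $E$, the invariant-mean average $t=\int_{u\in U}\pi(u)P_H\pi(u^*)\,dm$, the change-of-variables argument showing $t\in\pi(N)'\cap\pi(A_0)'$ and hence $t\in\pi(A)'$ by normality and weak density of $A_0=\mathrm{span}\{xu: x\in N,\,u\in U\}$, the spectral projection $q$ and unitary $w$ with $wP_Hw^*=q$, and the exact multiplicativity of $\Phi(a)=P_Hw^*\pi(a)w|_H$ coming from $q\in\pi(A)'$. The key point you flag as the main obstacle (justifying $t\in\pi(A)'$ entrywise via left-invariance of $m$) is precisely how the paper handles it, and your norm estimate matches the paper's.
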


\begin{proof}
Let $A_0:= \mathrm{span} \{ x u : x\in N, u\in U\}$.
By Stinespring's theorem,
there exist a Hilbert space $\tilde{H} \supseteq H$ and 
a unital normal $*$-homomorphism $\pi \colon  M \to \mathbb{B}(\tilde{H})$
such that
\begin{align*}
E (x)= P_H \pi (x) |_H, \ \ x\in M .
\end{align*}
Let $m\colon \ell^{\infty}(U)\to \mathbb{R}$ be a left-invariant mean with 
a net of finite subsets $\{ F_{\mu} \} \subseteq U$ such that 
\[
\lim_{\mu} \frac{1}{|F_{\mu}| } \sum_{g\in F_{\mu}} f(g) = m(f) , \ \ f\in \ell^{\infty}(U).
\]
Define
\[
t:= \int_{u\in U} \pi(u)P_H \pi(u^*) d m.
\]
Since $P_H\in \pi( N )'$, we have $t\in \pi(N )'$.
Fix $x\in A_0$.
Then there exist $\{ u_1,\dots, u_N\} \subseteq U$ and $\{ x_1,\dots,x_N\}\subseteq N_1$ 
such that $x=\sum_{i=1}^N x_i u_i $.
For any $\xi,\eta\in H$,
\begin{align*}
\left\langle \pi(x) t \xi, \eta \right\rangle
&=\int_{u\in U} \left\langle \pi(x) \pi(u) P_H \pi(u^*) \xi, \eta \right\rangle d m \\
&=\int_{u\in U} \sum_{i=1}^N \left\langle \pi( x_i u_i  u)P_H\pi(u^*) \xi, \eta \right\rangle d m \\
&=\sum_{i=1}^N \int_{v\in U} \left\langle \pi(x_i v)P_H\pi(v^* u_i  ) \xi, \eta \right\rangle d (u_i^* m) \\
&=\sum_{i=1}^N \int_{v\in U} \left\langle \pi( x_i v)P_H\pi(v^* u_i  ) \xi, \eta \right\rangle d m \\
&=\sum_{i=1}^N \int_{v\in U} \left\langle \pi(v)P_H\pi(v^* x_i u_i  ) \xi, \eta \right\rangle d m \\
&=\int_{u\in U} \left\langle \pi(u) P_H \pi(u^*) \pi(x) \xi, \eta \right\rangle d m 
= \left\langle t \pi(x) \xi, \eta \right\rangle.
\end{align*}
Therefore, $t\in \pi( A)'$ by the normality of $\pi$.
Furthermore,
for $u\in U$, there is $v\in B_1$ such that $\| u- v\| <\gamma$.
Then since $P_H\in \pi(B)'$, we have
\[
\| \pi (u)P_H - P_H\pi(u)\| \le \| \pi(u) P_H- \pi(v)P_H\| + \| P_H\pi(v) - P_H\pi(u)\| <2\gamma.
\]
Therefore,
\begin{align*}
\| t - P_H\| 
&\le \int_{u\in U} \| \pi(u)P_H \pi(u^*) - P_H\| d m \\
&=\int_{u\in U} \| \pi(u) P_H -  P_H \pi(u) \| d m 
\le 2 \gamma <\frac{1}{2}.
\end{align*}
Define $\delta:=\| t  - P_H\|$ and $q:=\chi_{[1-\delta,1]}(t)$.
Since $\|q-P_H\|\le 2\delta<1$,
there exists a unitary $w\in C^*(t, P_H, I_{\tilde{H}})$ such that $w P_H w^*=q$ and 
$\| w- I_{\tilde{H}} \| \le 2 \sqrt{2}\delta$ by Lemma \ref{polar}\,(3).
Define a map $\Phi\colon A \to \mathbb{B}(\tilde{H})$ by
\[
\Phi(x):= P_H w^* \pi (x) w |_H, \ \ x\in A .
\]
Since $t\in \overline{\mathrm{conv}}^{\w} \{ \pi(u) P_H \pi(u^*) : u\in U\}$ and 
$P_H \pi( A ) |_H= E( A )\subseteq B$,
we have $\Phi( A )\subseteq B$.
For any $x,y\in A$,
\begin{align*}
\Phi(x)\Phi(y)
&=P_H w^*\pi(x) w P_H w^*\pi(y)w P_H \\
&=P_H w^* \pi(x) q \pi(y) w P_H \\
&=P_H w^* q \pi(x y)w P_H \\
&=P_H w^* \pi (x y) w P_H =\Phi(x y).
\end{align*}
Therefore, $\Phi$ is a $*$-homomorphism.

Furthermore, for any $x\in A_1$,
\begin{align*}
\| \Phi(x) - x \| 
&\le \| \Phi(x) - E(x)\| + \| E (x) - x\| \\
&\le \| P_H w^* \pi(x) w P_H - P_H \pi(x) P_H \| + 2d(A , B) \\
&\le 2\| w - I_{\tilde{H}}\| + 2 d(A , B) \\
&\le (8 \sqrt{2}+2) \gamma.
\end{align*}
Since $w\in C^*(t, P_H, I_{\tilde{H}})\subseteq \pi(N)'$, 
$\Phi$ is a $N$-fixed map.
\end{proof}

We base the next lemma on Christensen's work \cite[Propositions 4.2 and 4.4]{Chris2}, 
which show similar results for injective von Neumann algebras.

\begin{lemma}\label{2.3}
Let $A,B$ and $N$ be von Neumann algebras  in $\mathbb{B}(H)$ with $N\subseteq A\cap B$.
Suppose that 
$N\subseteq A$ is crossed product-like by a discrete amenable group $U$.
Then
given two unital $N$-fixed normal $*$-homomorphisms 
$\Phi_1,\Phi_2 \colon A \to B$ with 
$\| \Phi_1-\Phi_2\| <1$,
there exists a unitary $u\in N' \cap B$ such that 
$\Phi_1=\mathrm{Ad}(u)\circ \Phi_2$ and 
$\| u - I \| \le \sqrt{2}\| \Phi_1- \Phi_2\|$.
\end{lemma}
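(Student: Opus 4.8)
The plan is to imitate the averaging construction of \lemref{1.5}, but to use the genuine left-invariant mean available in the von Neumann setting (as in the Remark preceding \lemref{2.2}) so as to produce an \emph{exact} intertwiner rather than an approximate one. Concretely, I would set
\[
s := \int_{u\in U}\Phi_1(u)\Phi_2(u^*)\,dm\ \in\ \mathbb{B}(H),
\]
where $m$ is a left-invariant mean on $\ell^{\infty}(U)$ arising from a F\o lner net $\{F_{\mu}\}\subseteq U$, and then show that (i) $s\in N'\cap B$, (ii) $\Phi_1(a)s=s\Phi_2(a)$ for all $a\in A$, and (iii) $\|s-I\|\le\|\Phi_1-\Phi_2\|<1$. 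Granting these, the unitary in the polar decomposition of $s$ will be the one we want.

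For (i): each $\Phi_1(u)\Phi_2(u^*)$ clearly lies in $B$, and it commutes with every $x\in N$. Indeed, since $\Phi_2$ is an $N$-fixed $*$-homomorphism and $u^{*}xu\in N$, one has $\Phi_2(u^*)x=\Phi_2(u^*x)=\Phi_2\big((u^*xu)u^*\big)=(u^*xu)\Phi_2(u^*)$, and similarly $\Phi_1(u)(u^*xu)=\Phi_1\big(u(u^*xu)\big)=\Phi_1(xu)=x\Phi_1(u)$; combining gives $\Phi_1(u)\Phi_2(u^*)x=x\,\Phi_1(u)\Phi_2(u^*)$. Since $N'\cap B$ is a von Neumann algebra and, by the Remark preceding \lemref{2.2}, $s$ lies in $\overline{\mathrm{conv}}^{\,\w}\{\Phi_1(u)\Phi_2(u^*):u\in U\}$, we get $s\in N'\cap B$. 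For (ii): if $g\in U$, then computing in the weak sense, $\Phi_1(g)s=\int_{u}\Phi_1(gu)\Phi_2(u^*)\,dm=\int_{v}\Phi_1(v)\Phi_2(v^*g)\,dm=s\Phi_2(g)$, where the middle step is the substitution $v=gu$ justified by left-invariance of $m$; and if $x\in N$, then $s\Phi_2(x)=sx=xs=\Phi_1(x)s$ using $s\in N'$ and $\Phi_i|_N=\id$. Hence $s$ intertwines $\Phi_1$ and $\Phi_2$ on the unital $*$-subalgebra $A_0:=\spa\{xu:x\in N,\ u\in U\}$, which is $\sigma$-weakly dense in $A=(N\cup U)''$; since $(A_0)_1$ is $\sigma$-weakly dense in $A_1$ by Kaplansky density, $\Phi_1,\Phi_2$ are normal, and $s$ is bounded, the identity $\Phi_1(a)s=s\Phi_2(a)$ extends to all $a\in A$.

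For (iii): write $s-I=\int_{u}\Phi_1(u)\big(\Phi_2(u^*)-\Phi_1(u^*)\big)\,dm$ and apply the norm bound $\|T_{\phi}\|\le\int\|\phi(u)\|\,dm$ from the same Remark to get $\|s-I\|\le\|\Phi_1-\Phi_2\|<1$. Thus $s$ is invertible; let $s=u|s|$ be the polar decomposition, so $u$ is a unitary lying in the von Neumann algebra $N'\cap B$ together with $|s|$ and $|s|^{-1}$, and by \lemref{polar}(1), $\|u-I\|\le\sqrt 2\,\|s-I\|\le\sqrt 2\,\|\Phi_1-\Phi_2\|$. Taking adjoints in $\Phi_1(a)s=s\Phi_2(a)$ yields $\Phi_2(a)s^{*}=s^{*}\Phi_1(a)$ for all $a$, whence $s^{*}s\,\Phi_2(a)=s^{*}\Phi_1(a)s=\Phi_2(a)\,s^{*}s$, so $|s|=(s^{*}s)^{1/2}\in\Phi_2(A)'$ by continuous functional calculus. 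Substituting $s=u|s|$ into $\Phi_1(a)s=s\Phi_2(a)$ and cancelling the invertible $|s|$ gives $\Phi_1(a)u=u\Phi_2(a)$, i.e. $\Phi_1=\Ad(u)\circ\Phi_2$.

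The step I expect to demand the most care is (ii): keeping the left-invariance convention of the mean straight through the reindexing $v=gu$ inside the bilinear-form definition of $\int_{u}(\cdot)\,dm$, and then justifying that the intertwining relation passes from the weakly dense $*$-subalgebra $A_0$ to all of $A$ using normality of $\Phi_1,\Phi_2$. Everything else is a routine transcription of the $C^{*}$-algebra argument of \lemref{1.5} with the approximation parameters set to zero, since the invariant mean removes the F\o lner error terms entirely.
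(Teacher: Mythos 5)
Your proposal is correct and follows essentially the same route as the paper: the same averaged intertwiner $s=\int_{u\in U}\Phi_1(u)\Phi_2(u^*)\,dm$, the same use of left-invariance to get $\Phi_1(x)s=s\Phi_2(x)$ on $\spa\{xu\}$ and normality to extend to $A$, the same bound $\|s-I\|\le\|\Phi_1-\Phi_2\|$, and the same polar-decomposition argument via $|s|^{-1}\in\Phi_2(A)'$. The only differences are cosmetic (you spell out the commutation of $\Phi_1(u)\Phi_2(u^*)$ with $N$, which the paper asserts directly).
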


\begin{proof}
Let  $A_0:= \mathrm{span}\{ x u : x\in N ,u\in U\}$
and let $m\colon\ell^{\infty}(U)\to \mathbb{R}$ be a left-invariant mean with 
there is a net of finite subsets $\{ F_{\mu} \} \subseteq U$ such that $m_{\mu}\to m$ in the weak-$*$ topology, where
\[
m_{\mu}(f)= \frac{1}{|F_{\mu}| } \sum_{g\in F_{\mu}} f(g), \ \ f\in \ell^{\infty}(U).
\]
Define 
\[
s := \int_{u\in U} \Phi_1(u) \Phi_2(u^*) d m .
\]
Since $\Phi_1$ and $\Phi_2$ are $N$-fixed maps and $U\subseteq \mathcal{N}_A(N)$,
we have $s \in N ' \cap B$. 
For $x\in A_0$,
there exist $\{ u_1,\dots,u_N\} \subseteq U$ and $\{ x_1,\dots ,x_N\} \subseteq N$ such that $x=\sum_{i=1}^N x_i u_i $.
For any $\xi ,\eta\in H$,
\begin{align*}
\langle \Phi_1(x)s\xi,\eta \rangle
&=\int_{u\in U} \left\langle \Phi_1(x) \Phi_1(u)\Phi_2(u^*) \xi, \eta \right\rangle d m \\
&=\int_{u\in U} \sum_{i=1}^N \left\langle \Phi_1(c_i u_i u) \Phi_2(u^* ) \xi, \eta\right\rangle d m \\
&=\sum_{i=1}^N \int_{v\in U} \left\langle \Phi_1(c_i v) \Phi_2(v^* u_i ) \xi, \eta\right\rangle d (u_i^* m) \\
&=\sum_{i=1}^N \int_{v\in U} \left\langle \Phi_1(c_i v) \Phi_2(v^* u_i ) \xi, \eta\right\rangle d  m \\
&=\sum_{i=1}^N \int_{v\in U} \left\langle \Phi_1( v) \Phi_2(v^* c_i u_i ) \xi, \eta\right\rangle d  m \\
&= \int_{v\in U} \left\langle \Phi_1(v) \Phi_2(v^* x) \xi, \eta\right\rangle d m 
=\langle s \Phi_2(x) \xi,\eta \rangle.
\end{align*}
Therefore, by the normality of $\Phi_1$ and $\Phi_2$,
\begin{align}\label{2.3.1}
\Phi_1(x)s= s\Phi_2(x), \ \ x\in A.
\end{align}
By taking adjoint,
\begin{align}\label{2.3.2}
s^*\Phi_1(x)=\Phi_2(x)s^*, \ \ x\in A.
\end{align}
By (\ref{2.3.1}) and (\ref{2.3.2}),
for $x\in A$,
$s^* s \Phi_2(x)= s^* \Phi_1(x) s= \Phi_2(x) s^* s$.
Thus, 
\begin{align}\label{2.3.3}
|s|^{-1} \Phi_2(x)= \Phi_2(x)|s|^{-1}, \ \ x\in A.
\end{align}

Furthermore,
\begin{align*}
\|s- I_H\| 
\le \int_{u\in U} \| \Phi_1(u)\Phi_2(u^*)- \Phi_1(u)\Phi_1(u^*) \| d m 
\le \| \Phi_1-\Phi_2\| <1.
\end{align*}
Hence by Lemma \ref{polar}\,(1), we can choose the unitary 
$u\in C^*(s,I )\subseteq N'\cap B$ 
in the polar decomposition of $s$ with $\| u- I  \| \le \sqrt{2}\| s- I \|$.

By (\ref{2.3.1}) and (\ref{2.3.3}),
\begin{align*}
\Phi_1(x)u=\Phi_1(x)s |s|^{-1}= s\Phi_2(x) |s|^{-1}= s |s|^{-1} \Phi_2(x) =u \Phi_2(x), \ \ x\in A.
\end{align*}
Therefore, $\Phi_1=\mathrm{Ad}(u)\circ \Phi_2$.
\end{proof}

Using Lemma \ref{2.2} and \ref{2.3},
it follows Theorem C.

\begin{thm}\label{2.3.5}
Let $N\subseteq M$ be an inclusion of von Neumann algebras in $\mathbb{B}(H)$ and 
let $A,B$ be intermediate von Neumann subalgebras for $N \subseteq M$
with a normal conditional expectation from $M$ onto $B$. 
Suppose that $N\subseteq A$ is crossed product-like by a discrete amenable group
and $d(A,B)<\gamma<10^{-2}$.
Then there exists a unitary $u\in N ' \cap (A\cup B)''$ 
such that $u A u^*= B$ 
and $\| u - I \| \le 2(8+\sqrt{2})\gamma$.
\end{thm}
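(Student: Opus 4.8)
The plan is to combine Lemma~\ref{2.2} and Lemma~\ref{2.3} with the standard argument that upgrades a near-identity $*$-homomorphism and its near-inverse into a unitary equivalence. First I would apply Lemma~\ref{2.2} to the pair $A,B$ with the given normal conditional expectation $E\colon M\to B$: since $N\subseteq A$ is crossed product-like by a discrete amenable group and $\gamma<10^{-2}<1/4$, we obtain a unital $N$-fixed normal $*$-homomorphism $\Phi\colon A\to B$ with $\|\Phi-\id_A\|\le(8\sqrt2+2)\gamma$. Symmetrically, since $d(A,B)=d(B,A)<\gamma$ and $B\subseteq A$ also sits inside the inclusion $N\subseteq M$ — but note $B$ need not be crossed product-like over $N$, so I cannot simply swap the roles. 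Instead I would run an intertwining/iteration scheme analogous to Lemma~\ref{3.1} and Proposition~\ref{3.2}, but in the von Neumann setting where $*$-homomorphisms are exact rather than approximate, so that the construction collapses to a single step: $\Phi(A)$ is a von Neumann subalgebra of $B$ with $\|\Phi-\id_A\|$ small, hence $B\subset_{\gamma'} \Phi(A)$ for some $\gamma'<1$, which (together with a dimension/near-inclusion argument) forces $\Phi(A)=B$, i.e. $\Phi$ is surjective.

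More concretely, here is the route I expect to take. From $\|\Phi-\id_A\|\le(8\sqrt2+2)\gamma$ and $d(A,B)<\gamma$ one checks that for every $b\in B_1$ there is $a\in A_1$ with $\|a-b\|<\gamma$, hence $\|\Phi(a)-b\|<\gamma+(8\sqrt2+2)\gamma<1$; so $B\subset_1 \Phi(A)$, and since $\Phi(A)$ is a C$^*$-subalgebra of $B$, Proposition~\ref{surjective} gives $\Phi(A)=B$. Thus $\Phi\colon A\to B$ is a surjective unital $N$-fixed normal $*$-homomorphism; injectivity follows because $\|\Phi(x)\|\ge\|x\|-(8\sqrt2+2)\gamma>0$ for $x$ in the unit sphere when $(8\sqrt2+2)\gamma<1$. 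So $\Phi$ is a $*$-isomorphism of $A$ onto $B$. Now I would like to realize $\Phi$ by a unitary; this is exactly the situation Lemma~\ref{2.3} addresses, \emph{provided} I can produce two unital $N$-fixed normal $*$-homomorphisms into a common target with distance $<1$, one of which is (up to the unitary) the identity. The natural move is to regard both $A$ and $B$ as subalgebras of $M':=(A\cup B)''$, let $\Phi\colon A\to M'$ and let $\iota\colon A\to M'$ be the inclusion; these are $N$-fixed normal $*$-homomorphisms with $\|\Phi-\iota\|\le(8\sqrt2+2)\gamma<1$, so Lemma~\ref{2.3} yields a unitary $u\in N'\cap M'$ with $\iota=\mathrm{Ad}(u)\circ\Phi$, i.e. $a=u\Phi(a)u^*$ for all $a\in A$, and $\|u-I\|\le\sqrt2\,\|\Phi-\iota\|\le\sqrt2(8\sqrt2+2)\gamma=(16+2\sqrt2)\gamma=2(8+\sqrt2)\gamma$. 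Then $u^*Au=\Phi(A)=B$, i.e. $u B u^*=A$; replacing $u$ by $u^*$ (which has the same distance to $I$) gives $uAu^*=B$ as stated.

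I should double-check one subtlety: Lemma~\ref{2.3} as stated requires $N\subseteq A$ crossed product-like and takes $\Phi_1,\Phi_2\colon A\to B$ with a common codomain $B$ that is a von Neumann algebra; here I am taking the codomain to be $(A\cup B)''$, which contains both $A$ and $B$ as von Neumann subalgebras, and the proof of Lemma~\ref{2.3} only uses that the codomain is a von Neumann algebra containing the ranges, so this is fine, and the resulting $u$ lies in $N'\cap(A\cup B)''$ exactly as claimed. The main obstacle — and the reason the von Neumann case is cleaner than the C$^*$-case of Theorem~\ref{3.3} — is making sure the normality hypotheses propagate: I must verify that $\Phi$ from Lemma~\ref{2.2} is genuinely normal (it is, being built from the Stinespring dilation of the \emph{normal} expectation $E$ and a cutdown by a spectral projection, all in $\pi(N)'$), and that the averaging operator $s$ in Lemma~\ref{2.3} lands in $N'\cap(A\cup B)''$; both are handled by the lemmas already proved, so the theorem's proof is essentially an assembly. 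The only genuinely new bookkeeping is the constant: tracking $\|u-I\|\le\sqrt2\cdot(8\sqrt2+2)\gamma=2(8+\sqrt2)\gamma$ and checking $10^{-2}$ is small enough that all the ``$<1$'' inequalities invoked above actually hold, which they comfortably do.
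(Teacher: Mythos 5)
Your proposal is correct and follows essentially the same route as the paper: Lemma~\ref{2.2} produces the $N$-fixed normal $*$-homomorphism $\Phi$, Lemma~\ref{2.3} (applied to $\Phi$ and the inclusion of $A$ into $(A\cup B)''$) converts it into $\mathrm{Ad}(u)$ with $\|u-I\|\le\sqrt{2}(8\sqrt{2}+2)\gamma=2(8+\sqrt{2})\gamma$, and the near-inclusion argument via Proposition~\ref{surjective} gives surjectivity. The only difference is that you establish $\Phi(A)=B$ before invoking Lemma~\ref{2.3}, whereas the paper checks $d(uAu^*,B)<1$ afterwards; this is an immaterial reordering.
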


\begin{proof}
By Lemma \ref{2.2}, 
there exists a unital $N$-fixed normal $*$-\hspace{0pt}homomorphism $\Phi\colon  A \to B$ such that
\[
\| \Phi- \id_A \| \le (8 \sqrt{2}+2)\gamma.
\]
Since $(8 \sqrt{2}+2)\gamma <1$,
there exists a unitary $u\in N' \cap (A\cup B)''$ such that 
$\Phi=\mathrm{Ad}(u)$ and 
$\| u - I \| \le \sqrt{2}\| \Phi- \id_A \| $ by Lemma \ref{2.3}.
Thus,
\[
u A u^* = \Phi(A) \subseteq B.
\]

Fix $x\in B_1$.
There exists $y \in A_1$ such that $\| x -y \|\le \gamma$.
Then,
\begin{align*}
\| y - u x u^*\| \le \| y- x\| + \| x- u x u^*\| \le \gamma + 2\| u- I\| <1.
\end{align*}
Thus, $d(u A u^*, B )<1$, that is, $u A u^*= B $ by Proposition \ref{surjective}.
\end{proof}

\begin{cor}\label{2.4}
Let $C\subseteq D$ be a unital inclusion of $\mathrm{C}^*$-algebras and 
let $A,B$ be intermediate $\mathrm{C}^*$-subalgebras for $C\subseteq D$
with a conditional expectation $E\colon D\to B$. 
Suppose that $C\subseteq A$ is crossed product-like by a discrete amenable group
and $d(A,B)<\gamma<10^{-2}$.
Then there exists a unitary $u\in (C^{**})' \cap W^*(A^{**}, B^{**})$ 
such that $u A^{**} u^*= B^{**}$ 
and $\| u - I \| \le 2(8+\sqrt{2})\gamma$.
\end{cor}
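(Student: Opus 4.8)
The strategy is to deduce this from the von Neumann algebra version \thmref{2.3.5} by passing to enveloping von Neumann algebras. Let $\pi\colon D\to\mathbb{B}(H_u)$ be the universal representation of $D$, so that $\pi(D)''=D^{**}$. Since every state of a C$^*$-subalgebra of $D$ extends (by Hahn--Banach and positivity) to a state of $D$, the restriction of $\pi$ to each of $A$, $B$, $C$ is quasi-equivalent to the universal representation of that algebra; hence there are normal $*$-isomorphisms $A^{**}\cong\pi(A)''$, $B^{**}\cong\pi(B)''$, $C^{**}\cong\pi(C)''$ fixing $A$, $B$, $C$ pointwise. Under these identifications we regard $A^{**}$, $B^{**}$ as intermediate von Neumann subalgebras for the inclusion $C^{**}\subseteq D^{**}$ acting on $H_u$, with $C^{**}\subseteq A^{**}\cap B^{**}$.

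Next I would verify the hypotheses of \thmref{2.3.5} with $N=C^{**}$ and $M=D^{**}$. The map $E^{**}\colon D^{**}\to B^{**}$ is normal (a second adjoint is weak-$*$ continuous), unital, of norm one, and restricts to $\id_{B^{**}}$ because $E|_B=\id_B$; by Tomiyama's theorem it is therefore a normal conditional expectation onto $B^{**}$. Writing $A=C^*(C,U)$ with $U$ a discrete amenable group in $\mathcal{N}_A(C)$, the $*$-homomorphism $\pi$ shows $\pi(C)\subseteq\pi(A)$ is crossed product-like by $\pi(U)$ acting non-degenerately on $H_u$, whence, by the example in \secref{von}, $\pi(C)''\subseteq\pi(A)''$ — that is, $C^{**}\subseteq A^{**}$ — is crossed product-like by the group $\pi(U)$, which is amenable as a quotient of $U$. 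Finally, $\pi|_D$ is isometric, so $d(\pi(A),\pi(B))=d(A,B)<\gamma$, and \lemref{weak-closure} gives $d(A^{**},B^{**})=d(\pi(A)'',\pi(B)'')\le d(A,B)<\gamma<10^{-2}$.

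\thmref{2.3.5} then produces a unitary $u\in(C^{**})'\cap(A^{**}\cup B^{**})''=(C^{**})'\cap W^*(A^{**},B^{**})$ with $uA^{**}u^*=B^{**}$ and $\|u-I\|\le 2(8+\sqrt{2})\gamma$, which is the assertion. The only point demanding real care is the identification $\pi(A)''\cong A^{**}$ (and likewise for $B$, $C$), since $\pi$ is the universal representation of the ambient algebra $D$ rather than of $A$ itself; this rests on the extension of states from $A$ to $D$ together with uniqueness of the normal extension of $\id_A$, which together force the GNS representations of $A$ to occur already inside $\pi|_A$. Once that is in hand the argument is a direct transcription of the C$^*$-data $(C\subseteq D, A, B, E)$ into the von Neumann setting and requires no new estimates.
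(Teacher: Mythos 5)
Your proposal is correct and follows essentially the same route as the paper: pass to the universal representation of $D$, identify $A^{**},B^{**},C^{**},D^{**}$ with $\pi(A)'',\pi(B)'',\pi(C)'',\pi(D)''$, use the normal conditional expectation $E^{**}\colon D^{**}\to B^{**}$ together with \lemref{weak-closure} to verify the hypotheses, and apply \thmref{2.3.5}. You simply supply more detail (the quasi-equivalence argument for $\pi|_A$, amenability of $\pi(U)$ as a quotient) than the paper's very terse proof.
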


\begin{proof}
By  a general construction, there exists a normal conditional expectation $E ^{**}\colon D^{**}\to B^{**}$.
Let $(\pi, H)$ be the universal representation of $D$ and 
identify $A^{**}$, $B^{**}$, $C^{**}$ and $D^{**}$ 
with $\pi(A)''$, $\pi(B)''$, $\pi(C)''$ and $\pi(D)''$, respectively.
Then by Theorem \ref{2.3.5} and Lemma \ref{weak-closure}, the corollary follows.
\end{proof}

%------------------------------------------------

\section{Unitary equivalence}\label{unitary}

%------------------------------------------------

In this section, 
we  show the fourth main result: Theorem D.
For a unital inclusion $C\subseteq D$ of C$^*$-algebras  acting on a separable  Hilbert space $H$
and sufficiently close separable intermediate C$^*$-subalgebras $A$, $B$ for $C\subseteq D$ with a conditional expectation of $D$ onto $B$,
if  $A=C \rtimes G$ with  $G$ discrete amenable and 
if $C'\cap A$ is weakly dense in $C'\cap \overline{A}^{\w}$, then $A$ and $B$ are unitarily equivalent.
The unitary can be chosen in the relative commutant of $C'\cap (A\cup B)''$.
To show this,
we modify the arguments of Section 5 in Christensen et al. \cite{CSSWW}.

\begin{lemma}\label{4.1}
Let $C\subseteq D$ be a unital inclusion of $\mathrm{C}^*$-algebras 
acting non-degenerately on a separable Hilbert space $H$.
Let $A$ and $B$ be separable intermediate $\mathrm{C}^*$-subalgebras for $C\subseteq D$ 
with a conditional expectation $E\colon D\to B$.
Suppose that
$C\subseteq A$ is crossed product-like by a discrete amenable group
and $C'\cap C^*(A,B)\subseteq \overline{C'\cap A}^{\w}$.
If $d(A,B)<\gamma<10^{-4}$,
then for any finite subsets $X\subseteq B_1$, $Z_A\subseteq A_1$ and $\varepsilon,\mu>0$,
there exist finite subsets $Y\subseteq B_1$, $Z\subseteq A_1$, a positive constant $\delta>0$, a unitary $u\in C'\cap C^*(A,B)$ 
and a $C$-fixed surjective $*$-isomorphism $\theta\colon B\to A$ with the following conditions$\colon$
\begin{enumerate}
\item[\upshape{(i)}] 
$\delta<\varepsilon;$

\item[\upshape{(ii)}] 
$X\subseteq_{\varepsilon}Y;$

\item[\upshape{(iii)}] 
$\| u - I\| \le 75\gamma;$

\item[\upshape{(iv)}] 
$\theta\approx_{Y,\delta} \mathrm{Ad}(u);$

\item[\upshape{(v)}] 
$\theta \approx_{X,117\gamma} \id_B$ and 
$\theta^{-1} \approx_{Z_A, 115\gamma} \id_A;$

\item[\upshape{(vi)}] 
For any $C$-fixed surjective $*$-isomorphism $\phi\colon B\to A$ with 
$
\phi^{-1}\approx_{Z, 365\gamma} \id_A,
$
there exists a unitary $w\in C'\cap A$ such that 
\[
\mathrm{Ad}(w) \circ \phi \approx_{Y, \delta/2} \theta \ \ \text{and} \ \ \| w - u\| \le 665\gamma;
\]

\item[\upshape{(vii)}] 
For any finite subset $S\subseteq H_1$ and 
unitary $v\in C'\cap C^*(A,B)$ with $\mathrm{Ad} (v) \approx_{Y,\delta}\theta$ and $\| v -u\| \le 740\gamma$,
there exists a unitary $\tilde{v} \in C'\cap A$ such that $\mathrm{Ad}(\tilde{v} v) \approx_{X,\varepsilon} \theta$, 
$\| \tilde{v}- I\| \le 740\gamma$ and
\[
\| (\tilde{v}v-u)\xi \| <\mu \ \ and \ \ \| (\tilde{v}v-u)^* \xi\| <\mu, \ \ \xi \in S.
\]
\end{enumerate}
\end{lemma}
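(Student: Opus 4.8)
The plan is to manufacture $\theta$ from Theorem~\ref{3.3}, to produce the approximately implementing unitary $u$ from Lemma~\ref{1.5}, and to use the density hypothesis $C'\cap C^*(A,B)\subseteq\overline{C'\cap A}^{\w}$ together with Lemma~\ref{1.8} to descend the various correction unitaries into $C'\cap A$; the seven clauses are then met by a careful choice, made in the right order, of the finite sets $Y,Z$ and the tolerance $\delta$.

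First I would apply Theorem~\ref{3.3} to obtain a $C$-fixed surjective $*$-isomorphism $\alpha\colon A\to B$ that is within $15\gamma$ of $\id_A$ on a prescribed finite subset of $A_1$ (containing $Z_A$) and whose inverse $\theta:=\alpha^{-1}\colon B\to A$ is within $17\gamma$ of $\id_B$ on a prescribed finite subset of $B_1$ (containing $X$); this already yields the ``$\id$-part'' of (v) with room against the stated $117\gamma,115\gamma$. Regarding $\id_A$ and $\alpha$ as two unital $C$-fixed $*$-homomorphisms from the crossed product-like algebra $A$ into $C^*(A,B)$ that agree to within $15\gamma$ on a suitable finite subset of $A_1$, Lemma~\ref{1.5} (with $C^*(A,B)$ as the ambient algebra) supplies a unitary $u\in C'\cap C^*(A,B)$ with $\|u-I\|\le\sqrt2(15\gamma+\delta)\le75\gamma$ such that $\Ad(u)\circ\alpha$ is within $\delta$ of $\id_A$ on a finite subset of $A_1$, which, transported through $\alpha$, says $\Ad(u)\approx_{Y,\delta}\theta$ for a finite set $Y\subseteq B_1$ that I enlarge to contain $X$ (so $X\subseteq_\varepsilon Y$) and everything Lemmas~\ref{1.5} and \ref{1.8} will request below; finally I fix $\delta<\varepsilon$ small. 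Clauses (i)--(v) follow by reading off the quantitative parts of these two invocations.

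For clause (vi), given a competing $C$-fixed surjective $*$-isomorphism $\phi\colon B\to A$ with $\phi^{-1}$ within $365\gamma$ of $\id_A$ on $Z$, the maps $\alpha=\theta^{-1}$ and $\phi^{-1}$ are two $C$-fixed $*$-homomorphisms $A\to C^*(A,B)$ agreeing on a suitable finite subset of $A_1$, so Lemma~\ref{1.5} produces a unitary $w_0\in C'\cap C^*(A,B)$, close in norm to $u$, with $\Ad(w_0)\circ\phi$ within $\delta/2$ of $\theta$ on $Y$; since $w_0\in C'\cap C^*(A,B)\subseteq\overline{C'\cap A}^{\w}$, Lemma~\ref{1.8} replaces $w_0$ by a unitary $w\in C'\cap A$ with $\|w-I\|\le\|w_0-I\|$ that still approximately commutes, in operator norm, with $\phi(Y)$ and is close enough to $w_0$ that $\Ad(w)\circ\phi\approx_{Y,\delta/2}\theta$ and $\|w-u\|\le665\gamma$ persist. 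Clause (vii) is the delicate one: starting from $v\in C'\cap C^*(A,B)$ with $\Ad(v)\approx_{Y,\delta}\theta$ and $\|v-u\|\le740\gamma$, one checks from $\Ad(v)\approx_{Y,\delta}\theta\approx_{Y,\delta}\Ad(u)$ that $v^*u$ approximately commutes, in operator norm, with the finite set of $A$-elements that Lemma~\ref{1.8} will need, and that $v^*u\in C'\cap C^*(A,B)\subseteq\overline{C'\cap A}^{\w}$ with $\|v^*u-I\|=\|v-u\|\le740\gamma$; Lemma~\ref{1.8}, applied with the given finite set $S$ of vectors, then yields a unitary $\tilde v\in C'\cap A$ with $\|\tilde v-I\|\le740\gamma$, approximately commuting (in operator norm) with a finite set containing $\theta(X)$, and so close to $v^*u$ on an enlarged vector set that $\|(\tilde v v-u)\xi\|<\mu$ and $\|(\tilde v v-u)^*\xi\|<\mu$ for $\xi\in S$; since $\Ad(v)$ already implements $\theta$ on $X$ within $\delta<\varepsilon$ and $\tilde v$ approximately commutes with $vXv^*\approx\theta(X)$ in operator norm, one also gets $\Ad(\tilde v v)\approx_{X,\varepsilon}\theta$.

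The main obstacle is organisational rather than conceptual: all of (i)--(vii) must hold for a \emph{single} output $(Y,Z,\delta,u,\theta)$, so the finite sets and tolerances fed into Theorem~\ref{3.3}, Lemma~\ref{1.5} and Lemma~\ref{1.8} have to be chosen in an order that resolves the apparent circularities (for instance, $Y$ must already contain the sets that Lemma~\ref{1.8} will request, and those depend on $\theta$, which is therefore produced first), while the accumulated numerical constants stay below the stated thresholds under the standing assumption $\gamma<10^{-4}$. The one genuinely subtle point is clause (vii): the correction unitary must be pulled from $C'\cap C^*(A,B)$ into $C'\cap A$ \emph{simultaneously} keeping operator-norm control of its commutator with the relevant elements of $A$ (to preserve the $\Ad$-condition) and vector-wise control of its action on the prescribed $S\subseteq H_1$ (needed because, in the application to Proposition~\ref{4.3}, this clause is iterated to produce a strong-operator-convergent sequence of unitaries, together with their adjoints, whose limit implements the isomorphism). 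Since Lemma~\ref{1.8} only guarantees $\|\tilde v-I_H\|\le\|(\text{input unitary})\|$ and does not tighten norms under this descent, one is forced to carry the generous constants $75\gamma,117\gamma,665\gamma,740\gamma$ throughout.
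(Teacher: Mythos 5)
Your toolkit is the right one (Theorem~\ref{3.3} for $\theta$, Lemma~\ref{1.5} for the intertwining unitaries, Lemma~\ref{1.8} plus the hypothesis $C'\cap C^*(A,B)\subseteq\overline{C'\cap A}^{\w}$ to descend into $C'\cap A$, and your reading of clause (vii) is close to the paper's), but two steps do not close. First, the circularity is not resolved by ``producing $\theta$ first''; that order makes Lemma~\ref{1.5} inapplicable. To intertwine $\alpha=\theta^{-1}$ with $\id_A$ on the target set $Y_0$ that Lemma~\ref{1.8} requests, Lemma~\ref{1.5} first hands you a finite set $Z\subseteq A_1$ on which $\alpha$ and $\id_A$ must \emph{already} agree to within $15\gamma$; Proposition~\ref{3.2} only guarantees such agreement on the finite set prescribed when $\alpha$ was built, and there is no global bound on $\|\alpha-\id_A\|$. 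Since $Y_0$ depends on $\theta(X)$, hence on $\alpha$, you cannot know $Z$ before building $\alpha$. The paper breaks this loop with an auxiliary isomorphism $\beta\colon B\to A$ constructed first, sets $X_0:=\beta(X)$, obtains $(Y_0,\delta)$ from Lemma~\ref{1.8} and then $Z$ from Lemma~\ref{1.5}, builds $\sigma\colon A\to B$ adapted to $Z$, and only then defines $\theta:=\sigma^{-1}\circ\Ad(w_1)$ with a correcting unitary $w_1\in C'\cap B$ chosen so that $\theta\approx_{X,\varepsilon/3}\beta$; this last closeness is precisely what makes (ii), $X\subseteq_{\varepsilon}Y:=\theta^{-1}(Y_0)$, compatible with (iv). Without the $\beta$/$w_1$ device you cannot satisfy (ii) and (iv) simultaneously.

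Second, your argument for clause (vi) fails quantitatively. Lemma~\ref{1.8} returns a unitary $w\in C'\cap A$ that is close to $w_0$ only on finitely many vectors and approximately commutes with a finite set in operator norm; neither controls $\|w\phi(y)w^*-w_0\phi(y)w_0^*\|$ in operator norm, so the condition $\Ad(w)\circ\phi\approx_{Y,\delta/2}\theta$ --- where $\delta$ may be far smaller than $\gamma$ --- is destroyed by the descent. The paper avoids Lemma~\ref{1.8} here entirely: it produces the intertwiner $w_0$ of Lemma~\ref{1.5} inside $C'\cap B$ and sets $w:=\phi(w_0^*w_1)$, which lies in $C'\cap A$ automatically because $\phi$ is a $C$-fixed isomorphism of $B$ onto $A$, and for which $\Ad(w)\circ\phi=\phi\circ\Ad(w_0^*w_1)$ gives the exact $\delta/2$ estimate. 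A related smaller gap sits in your (vii): Lemma~\ref{1.8} needs approximate commutation with a finite subset of $A_1$ (the crossed product-like inclusion is $C\subseteq A$), whereas $\Ad(v)\approx_{Y,\delta}\Ad(u)$ only yields approximate commutation of $u^*v$ with $Y\subseteq B_1$; the paper's bookkeeping ($Y=\theta^{-1}(Y_0)$ with $Y_0\subseteq A_1$ and $u=u_0^*w_1$) is what converts this into commutation with $Y_0\subseteq A_1$ before Lemma~\ref{1.8} can be invoked.
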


\begin{proof}
Let  $X, Z_A, \varepsilon$ and $\mu$ be given.
By Lemma \ref{1.5},
there exists a finite subset $Z_1\subseteq B_1$ satisfying the following condition:
given two unital $C$-fixed $*$-homomorphisms $\phi_1,\phi_2\colon B\to B$ with $\phi_1\approx_{Z_1,32\gamma}\phi_2$,
there exists a unitary $w_1\in C'\cap B$ such that 
$\phi_1\approx_{X,\varepsilon/3}\mathrm{Ad} (w_1) \circ \phi_2$ and 
$\| w_1- I_H\| \le 32\sqrt{2}\gamma$.

By Proposition \ref{3.2},
there exists a $C$-fixed surjective $*$-isomorphism $\beta\colon B\to A$ such that 
\begin{align}\label{4.1.1}
\beta\approx_{Z_1, 17\gamma} \id_B.
\end{align}
Let $X_0:=\beta(X)$.

By Lemma \ref{1.8},
there exist a finite set $Y_0\subseteq A_1$ and $\delta>0$ with the following properties:
$\delta<\varepsilon/6$,
$X_0\subseteq Y_0$
and 
given a finite set $S_0\subseteq H_1$ and a unitary $u\in C'\cap C^*(A,B)$ with
$\| u - I\| \le 740\gamma$ and 
\begin{align*}
\| u y_0 - y_0 u \| \le 3\delta, \ \ y_0\in Y_0,
\end{align*}
there exists a unitary $v\in C'\cap A$ such that $\| v -I\| \le 740\gamma$,
\[
\| v x_0 - x_0 v \| \le \frac{\varepsilon}{6}, \ \ x_0\in X_0
\]
and
\[
\| (v-u)\xi_0\| <\mu \ \ \mathrm{and} \ \ \|(v-u)^*\xi_0\|<\mu, \ \ \xi_0\in S_0,
\]
since $C'\cap C^*(A,B)\subseteq \overline{C'\cap A}^{\w}$.

By Lemma \ref{1.5},
there exists a finite set $Z\subseteq A_1$ with the following properties:
$\beta(Z_1)\cup Z_A\subseteq Z$ and given $\gamma_0<1/10$ and 
two unital $C$-fixed $*$-homomorphism $\phi_1,\phi_2\colon A\to C^*(A,B)$ 
with $\phi_1\approx_{Z,\gamma_0}\phi_2$,
there exists a unitary $u_0\in C'\cap C^*(A,B)$ such that $\mathrm{Ad} (u_0) \circ \phi_1\approx_{Y_0,\delta/2}\phi_2$
and $\| u_0 - I\| \le \sqrt{2}\gamma_0$.

By Proposition \ref{3.3},
there exists a $C$-fixed surjective $*$-isomorphism $\sigma\colon A\to B$ such that
\begin{align}\label{4.1.2}
\sigma\approx_{Z,15\gamma} \id_A \ \ \mathrm{and} \ \ \sigma^{-1}\approx_{X,17\gamma} \id_B.
\end{align}
Hence, by the choice of $Z$,
there exists a unitary $u_0\in C'\cap C^*(A,B)$ such that 
\begin{align}\label{4.1.3}
\sigma\approx_{Y_0,\delta/2} \mathrm{Ad} (u_0)
\end{align}
and $\| u_0 - I\| \le 15\sqrt{2}\gamma <25\gamma$.

Since $\beta(Z_1)\subseteq Z$, (\ref{4.1.1}) and (\ref{4.1.2}),
we have
\begin{align}\label{4.1.4}
\sigma\circ\beta\approx_{Z_1,32\gamma} \id_B.
\end{align}
By the definition of $Z_1$,
there exists a unitary $w_1\in C'\cap B$ such that 
\begin{align}\label{4.1.5}
\sigma\circ \beta \approx_{X,\varepsilon/3} \Ad  (w_1)
\end{align}
and $\| w_1 - I\| \le 32\sqrt{2}\gamma < 50\gamma$.

Now define $\theta:=\sigma^{-1}\circ \Ad (w_1)$, $Y:=\theta^{-1}(Y_0)$ and $u:= u_0^*w_1$.
Fix $y\in Y$.
Let $y_0:=\theta(y)\in Y_0$.
Then,
\begin{align*}
\| \theta(y) - \Ad (u) (y)\| 
&=\| y_0 - (\Ad  (u) \circ\theta^{-1})(y_0) \| \\
&=\| y_0- (\Ad (u_0^*) \circ \sigma)(y_0) \| \\
&=\| \Ad (u_0) (y_0) - \sigma(y_0) \| \le \frac{\delta}{2},
\end{align*}
since $\theta^{-1}=\Ad (w_1^*) \circ \sigma$ and (\ref{4.1.3}).
Thus, $\theta\approx_{Y,\delta/2}\Ad (u)$, so that condition (iv) holds.

By the definition of $u$, we have
\begin{align*}
\| u- I\| = \| w_1- u_0\| \le \| w_1- I\| + \| I - u_0 \| <75 \gamma.
\end{align*}
Hence, condition (iii) holds.

For any $x\in X$,
\begin{align}\label{4.1.6}
\begin{split}
\| \theta(x) - x \| 
&\le \| (\sigma^{-1}\circ\Ad (w_1))(x) - \sigma^{-1}(x) \| + \| \sigma^{-1}(x) - x\| \\
&\le 2\| w_1- I_H\| +17\gamma \\
&\le 100\gamma +17\gamma= 117\gamma.
\end{split}
\end{align}
For any $z\in Z$,
\begin{align*}
\| \theta^{-1}(z) - z \| 
&\le \| (\Ad (w_1^*) \circ \sigma)(z) - \Ad (w_1^*)(z) \| + \| \Ad (w_1^*)(z) - z\| \\
&\le \| \sigma(z) - z\| + 2\| w_1-I\| \\
&\le 15\gamma +100\gamma=115\gamma.
\end{align*}
Therefore,
\begin{align}\label{4.1.7}
\theta^{-1}\approx_{Z,115\gamma} \id_A.
\end{align}
Since $Z_A\subseteq Z$,
we have $\theta^{-1}\approx_{Z_A,115\gamma} \id_A$, so that condition (v) holds.

By (\ref{4.1.5}),
\begin{align}\label{4.1.8}
\theta=\sigma^{-1}\circ \Ad(w_1) \approx_{X,\varepsilon/3} \sigma^{-1}\circ \sigma\circ\beta=\beta
\end{align}
Fix $x_0\in X_0$.
Let $x:= \beta^{-1}(x_0)\in X$.
Then, by (\ref{4.1.8}),
\begin{align*}
\| \theta^{-1}(x_0)-\beta^{-1}(x_0)\| = \| (\theta^{-1}\circ\beta)(x) - x \| =\| \beta(x) -\theta(x)\| \le \frac{\varepsilon}{3}.
\end{align*}
Therefore, 
\begin{align}\label{4.1.9}
\theta^{-1}\approx_{X_0, \varepsilon/3} \beta^{-1}.
\end{align}
Hence,
\[
X=\beta^{-1}(X_0)\subseteq_{\varepsilon/3} \theta^{-1}(X_0) \subseteq \theta^{-1}(Y_0) =Y,
\]
so that condition (ii) holds.

We now verify condition (vi).
Let $\phi\colon B\to A$ be a $C$-fixed surjective $*$-isomorphism with $\phi^{-1}\approx_{Z,365\gamma} \id_A$.
By (\ref{4.1.2}),
\[
\phi^{-1}\approx_{Z,380\gamma}\sigma.
\]
Thus, by the definition of $Z$,
there exists a unitary $w_0\in C'\cap B$ such that 
\begin{align}\label{4.1.10}
\Ad (w_0)\circ \phi^{-1}\approx_{Y_0,\delta/2}\sigma
\end{align}
and $\| w_0 - I\| \le 380\sqrt{2}\gamma<540\gamma$.
Fix $y\in Y$.
Let $y_0:= \theta(y)\in Y_0$.
Then, since $w_0^*w_1\in B$, we have
\begin{align*}
\| \theta(y) - (\Ad(\phi(w_0^*w_1))\circ \phi)(y) \| 
&= \| \theta(y)- (\phi\circ\Ad(w_0^*w_1))(y) \| \\
&= \| y_0 - (\phi\circ\Ad (w_0^*) \circ\sigma)(y_0)\| \\
&=\| (\Ad (w_0) \circ\phi^{-1})(y_0)-\sigma(y_0)\| \le \frac{\delta}{2}
\end{align*}
by (\ref{4.1.10}).
Define $w:=\phi(w_0^*w_1)$ so
$\theta\approx_{Y,\delta/2}\Ad (w) \circ\phi$.
Since $\phi$ is $C$-fixed map and $w_0,w_1\in C'$, $w$ is in $C'\cap A$.
Moreover,
\begin{align*}
\| w-u\| 
&\le \| w -I\|+ \| I-u\|
\le \| w_0^*w_1-I\| +  75\gamma \\
&\le \|w_0-I\|+\|I-w_1\|+75\gamma
\le (540+50+75)\gamma \\
&=665\gamma.
\end{align*}
Therefore, condition (vi) is proved.

It only remains to prove condition (vii).
Let $S\subseteq H_1$ be a finite set and $v\in C'\cap C^*(A,B)$ be a unitary with $\| v-u\| \le 740\gamma$ and 
\begin{align}\label{4.1.11}
\Ad (v)\approx_{Y,\delta}\theta.
\end{align}
Fix $y_0\in Y_0$.
Let $y:=\theta^{-1}(y_0)\in Y$.
Then, 
\begin{align}\label{4.1.99}
\begin{split}
\| \sigma(y_0)-\Ad(w_1v^*)(y_0)\|
&=\| (\Ad (w_1^*) \circ \sigma)(y_0)- \Ad (v^*) (y_0) \| \\
&=\| y - (\Ad (v^*) \circ\theta)(y) \| \\
&=\| \Ad (v) (y) - \theta(y) \| \le\delta .
\end{split}
\end{align}
This and (\ref{4.1.3}) give $\Ad (u_0) \approx_{Y_0,3\delta/2}\Ad(w_1v^*)$.
Therefore, for any $y_0\in Y_0$,
\begin{align}\label{4.1.12}
\| (v w_1^* u_0) y_0 - y_0(v w_1^* u_0) \| =\|u_0 y_0u_0^*- (w_1 v^*) y_0 (w_1 v^*)^* \| \le \frac{3}{2}\delta.
\end{align}
Furthermore,
\begin{align}\label{4.1.13}
\| v w_1^* u_0 - I\| =\| w_1^* u_0- v^*\| = \| u^* - v^*\| \le 740\gamma.
\end{align}
Let $S_0:=S\cup w_1 S\cup v S$.
By the definition of $Y_0$ and $\delta$, with $v w_1^* u_0$ and $S_0$,
there exists a unitary $v_0\in C'\cap A$ such that $\| v_0-I\|\le 740\gamma$, 
\begin{align}\label{4.1.14}
\| v_0 x_0- x_0 v_0 \| \le \frac{\varepsilon}{6}, \ \ x_0\in X_0
\end{align}
and 
\begin{align}\label{4.1.15}
\| (v_0- v w_1^*u_0)\xi_0\| <\mu \ \ \mathrm{and} \ \ \| (v_0- v w_1^*u_0)^*\xi_0\| <\mu, \ \ \xi_0\in S_0.
\end{align}
Let $\tilde{v}:=v_0^*$.
Then, $\| \tilde{v}-I\| \le \| v_0-I\| \le 740\gamma $.

For any $\xi\in S$,
\begin{align*}
\| (\tilde{v}v -u)\xi\| &=\| ( v_0^* v - u_0^* w_1)\xi \| =\| ( v_0^* - u_0^* w_1 v^*) v \xi\| \\
&=\| (v_0- v w_1 u_0)^* v \xi \| <\mu
\end{align*}
by (\ref{4.1.15}) and $v \xi \in S_0$.
Moreover,
\[
\| (\tilde{v} v -u)^*\xi\| =\| (v_0^*v- u_0^*w_1)^*\xi\| = \| v^*(v_0 - v w_1^* u_0)\xi\| <\mu
\]
by (\ref{4.1.15}).

For any $x_0\in X_0$, by (\ref{4.1.99}) and (\ref{4.1.14}),
\begin{align}\label{4.1.16}
\begin{split}
\| \theta^{-1}(x_0) -  \Ad(v^* v_0)(x_0)\| 
&\le \| \theta^{-1}(x_0) - \Ad(v^*)(x_0) \| + \| \Ad(v^*)(x_0) - \Ad(v^* v_0)(x_0) \| \\
&= \| (\Ad(w_1^*) \circ \sigma)(x_0) - \Ad(v^*)(x_0) \| + \| x_0 - \Ad(v_0)(x_0) \|   \\
&= \| \sigma(x_0) - \Ad(w_1v^*)(x_0) \| + \| v_0 x_0 - x_0 v_0 \| \\
&< \delta +  \frac{\varepsilon}{6} \le \frac{\varepsilon}{3}.
\end{split}
\end{align}
Let $x\in X$ and
$x_0:=\beta(x)\in X_0$.
By (\ref{4.1.8}), (\ref{4.1.9}) and (\ref{4.1.16}),
\begin{align*}
\| \Ad(\tilde{v}v)(x) - \theta(x) \|  
&\le \| \Ad(\tilde{v}v)(x) - \beta(x) \| + \| \beta(x)- \theta(x)\| \\
&\le \| (\Ad(\tilde{v}v)\circ \beta^{-1})(x_0) - x_0 \| + \frac{\varepsilon}{3} \\
&= \| \beta^{-1}(x_0) - \Ad(v^* v_0)(x_0) \| + \frac{\varepsilon}{3} \\
&\le \| \beta^{-1}(x_0) - \theta^{-1}(x_0) \| + \| \theta^{-1}(x_0) - \Ad(v^* v_0)(x_0)\| + \frac{\varepsilon}{3} \\
&\le \frac{\varepsilon}{3} + \frac{\varepsilon}{3} + \frac{\varepsilon}{3} = \varepsilon.
\end{align*}
Therefore, condition (vii) holds.
\end{proof}

\begin{lemma}\label{4.2}
Let $C\subseteq D$ be a unital inclusion of $\mathrm{C}^*$-algebras acting non-degenerately on a separable Hilbert space $H$.
Let $A$ and $B$ be separable intermediate $\mathrm{C}^*$-subalgebras for $C\subseteq D$ 
with a conditional expectation $E\colon D\to B$.
Let $\{ a_n\}_{n=1}^{\infty}$, $\{ b_n\}_{n=1}^{\infty}$ and $\{ \xi_n\}_{n=0}^{\infty}$ be dense subsets in $A_1$, $B_1$ and $H_1$, respectively.
Suppose that 
$C\subseteq A$ is crossed product-like by a discrete amenable group
and $C'\cap C^*(A,B)\subseteq \overline{C'\cap A}^{\w}$.
If $d(A,B)<\gamma<10^{-5}$,
then
there exist finite subsets 
$\{ X_n\}_{n=0}^{\infty}, \{Y_n\}_{n=0}^{\infty}\subseteq B_1$, 
$\{Z_n\}_{n=0}^{\infty}\subseteq A_1$, 
positive constants $\{\delta_n\}_{n=0}^{\infty}$, 
unitaries $\{u_n\}_{n=0}^{\infty}\subseteq C'\cap C^*(A,B)$ 
and $C$-fixed surjective $*$-isomorphisms $\{\theta_n\colon B\to A\}_{n=0}^{\infty}$ with the following conditions$\colon$
\begin{enumerate}
\item[\upshape{(1)}]
For $n\ge 1$, $b_1,\dots,b_n\in X_n;$

\item[\upshape{(2)}]
For $n\ge 0$, $X_n\subseteq_{2^{-n}/3}Y_n$ and $\delta_n<2^{-n};$

\item[\upshape{(3)}] 
For $n\ge 1$, $\theta_n\approx_{X_{n-1},2^{-(n-1)}}\theta_{n-1};$

\item[\upshape{(4)}]
For $n\ge 0$, $\theta_n\approx_{Y_n,\delta_n}\Ad (u_n);$

\item[\upshape{(5)}]
For $1\le j \le n$, $\|(u_n- u_{n-1})\xi_j\|<2^{-n}$ and $\| (u_n- u_{n-1})^* \xi_j \| <2^{-n};$

\item[\upshape{(6)}]
For $1\le j\le n$, there exists $x\in X_n$ such that $\| \theta_n(x)-b_j\| \le 9/10;$

\item[\upshape{(7)}] 
For $n\ge 0$ and
a $C$-fixed surjective $*$-isomorphism $\phi\colon B\to A$ with 
$\phi^{-1}\approx_{Z_n, 365\gamma} \id_A$,
there exists a unitary $w\in C'\cap A$ such that 
$\mathrm{Ad} (w) \circ \phi \approx_{Y_n, \delta_n/2} \theta_n$ and 
$\| w - u_n\| \le 665\gamma;$

\item[\upshape{(8)}] 
For $n\ge 0$,
a finite subset $S\subseteq H_1$ and 
a unitary $v\in C'\cap C^*(A,B)$ with $\mathrm{Ad} (v) \approx_{Y_n,\delta_n}\theta_n$ and
 $\| v -u_n\| \le 740\gamma$,
there exists a unitary $\tilde{v} \in C'\cap A$ such that 
$\mathrm{Ad}(\tilde{v} v) \approx_{X_n, 2^{-(n+1)}} \theta_n$, 
$\| \tilde{v}- I\| \le 740\gamma$ and
\[
\| (\tilde{v}v-u_n)\xi \| < \frac{1}{2^{n+1}} \ \ and \ \ \| (\tilde{v}v-u_n)^* \xi\| <\frac{1}{2^{n+1}}, \ \ \xi \in S;
\]

\item[\upshape{(9)}]
For $n\ge 0$, there is a unitary $z\in A$ such that $\| z - u_n\| \le 75\gamma$.
\end{enumerate}
\end{lemma}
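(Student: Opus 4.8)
The plan is to build all the data by induction on $n$: at each stage one calls Lemma~\ref{4.1} to produce the ``raw'' $n$-th objects, and then uses the self‑correcting clauses (vi), (vii) of that lemma — available at level $n-1$ as the inductive hypotheses (7), (8) — to conjugate the raw objects by a unitary in $C'\cap A$ so that they lie close to the level‑$(n-1)$ objects. For $n=0$, apply Lemma~\ref{4.1} with $X=\{0\}\subseteq B_1$, $Z_A=\{0\}\subseteq A_1$ and auxiliary parameters $\varepsilon_0,\mu_0$ small (less than $2^{-1}$ and less than whatever the stage‑$1$ estimates will demand); set $X_0=\{0\}$ and let $Y_0,Z_0,\delta_0,u_0,\theta_0$ be the outputs. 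Then (1)--(6) are vacuous, (7)--(8) are exactly (vi)--(vii), and (9) holds with $z=I$ by (iii).

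For the inductive step, assume the data is built through level $n-1$, fix $\varepsilon_n>0$ small (say $\varepsilon_n\le 2^{-(n+2)}$ and $\varepsilon_n\le\delta_{n-1}/6$, and small enough for the composed estimates below), and put $\mu_n=2^{-(n+1)}$. First apply Lemma~\ref{4.1} with $X:=X_{n-1}\cup Y_{n-1}\cup\{b_1,\dots,b_n\}\subseteq B_1$, $Z_A:=Z_{n-1}\subseteq A_1$ and parameters $\varepsilon_n,\mu_n$; call the outputs $Y_n,Z_n,\delta_n$, the unitary $\hat u_n\in C'\cap C^*(A,B)$ and the isomorphism $\hat\theta_n\colon B\to A$, and set $X_n:=X$; thus $X_{n-1}\cup Y_{n-1}\subseteq X_n\subseteq_{\varepsilon_n}Y_n$, and $\hat\theta_n^{-1}\approx_{Z_{n-1},365\gamma}\id_A$ by (v). Apply hypothesis (7)$_{n-1}$ to $\phi:=\hat\theta_n$ to obtain $w_n'\in C'\cap A$ with $\Ad(w_n')\circ\hat\theta_n\approx_{Y_{n-1},\delta_{n-1}/2}\theta_{n-1}$ and $\|w_n'-u_{n-1}\|\le665\gamma$, and put $v_n:=w_n'\hat u_n$; then $\|v_n-u_{n-1}\|\le740\gamma$ (using $\|\hat u_n-I\|\le75\gamma$), and (iv) together with $Y_{n-1}\subseteq_{\varepsilon_n}Y_n$ and the smallness of $\varepsilon_n$ yields $\Ad(v_n)\approx_{Y_{n-1},\delta_{n-1}}\theta_{n-1}$. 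Now apply hypothesis (8)$_{n-1}$ with $S:=\{\xi_1,\dots,\xi_n\}$ and $v:=v_n$ to obtain $\tilde v_n\in C'\cap A$ with $\|\tilde v_n-I\|\le740\gamma$, $\Ad(\tilde v_n v_n)\approx_{X_{n-1},2^{-n}}\theta_{n-1}$, and $\|(\tilde v_n v_n-u_{n-1})\xi_j\|<2^{-n}$, $\|(\tilde v_n v_n-u_{n-1})^*\xi_j\|<2^{-n}$ for $1\le j\le n$. Finally set $w_n:=\tilde v_n w_n'\in C'\cap A$, $u_n:=w_n\hat u_n=\tilde v_n v_n$, and $\theta_n:=\Ad(w_n)\circ\hat\theta_n$.

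Then: (1), (2) follow from the choice of $X_n$ and from (i), (ii); (9) holds with $z=w_n$, since $\|w_n-u_n\|=\|I-\hat u_n\|\le75\gamma$; (4) is (iv) pushed through the isometric automorphism $\Ad(w_n)$; and (5) is the vector estimate just obtained, because $u_n=\tilde v_n v_n$. For (3): combine (4) with $X_{n-1}\subseteq_{\varepsilon_n}Y_n$ and then with the estimate $\Ad(u_n)\approx_{X_{n-1},2^{-n}}\theta_{n-1}$ from (8)$_{n-1}$, using that $\varepsilon_n$ was chosen with $2\varepsilon_n+\delta_n+2^{-n}\le2^{-(n-1)}$. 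Clause (6) is forced by $b_1,\dots,b_n\in X_n$ and the near‑identity estimate (v) for $\hat\theta_n$ (adjoining to $X_n$, if the conjugation by $w_n$ must be absorbed, elements of $B_1$ that $\theta_{n-1}$ carries $\gamma$‑close to the $b_j$). Finally (7)$_n$, (8)$_n$ hold because (vi), (vii) are valid for the raw data $\hat\theta_n,\hat u_n,X_n,Y_n,Z_n,\delta_n$ and are equivariant under conjugation by $w_n\in C'\cap A$: given $\phi$ (resp.\ a unitary $v$ and a vector set $S$) in the hypothesis, run (vi) on $\phi$ (resp.\ (vii) on $w_n^*v$ with the vector set $S\cup w_n^*S$) and conjugate the output back by $w_n$; the bounds $665\gamma$ and $740\gamma$ survive because $w_n$ is unitary, and $\varepsilon_n\le2^{-(n+1)}$, $\mu_n=2^{-(n+1)}$ supply the remaining tolerances.

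The main obstacle is purely bookkeeping: one must commit at the start of stage $n$ to a single $\varepsilon_n$ (and $\mu_n$) small enough that every approximation produced later that stage composes to exactly the constants in (1)--(9), and one must nest the finite sets ($X_{n-1}\subseteq X_n$, $Y_{n-1}\subseteq X_n$, $Z_{n-1}\subseteq Z_A$, the $b_j$'s, and the witnesses needed for (6)) so that $\hat\theta_n$, $v_n$ and $\{\xi_1,\dots,\xi_n\}$ truly satisfy the hypotheses of (7)$_{n-1}$, (8)$_{n-1}$. No new idea beyond Lemma~\ref{4.1} is required; this is the recursion of Section~5 of \cite{CSSWW} transplanted to the $C$‑fixed crossed‑product‑like setting.
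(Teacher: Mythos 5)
Your recursion is the same as the paper's: at stage $n$ you call Lemma~\ref{4.1} on $X_n$, $Z_{n-1}$ to get raw data $\hat\theta_n,\hat u_n,Y_n,Z_n,\delta_n$, use $(7)_{n-1}$ and $(8)_{n-1}$ to produce the correcting unitary $w_n=\tilde v_n w_n'\in C'\cap A$, and set $\theta_n=\Ad(w_n)\circ\hat\theta_n$, $u_n=w_n\hat u_n$. Your verifications of (1)--(5) and (7)--(9) match the paper's and the constants work out. The genuine gap is clause (6). You claim it is ``forced by $b_1,\dots,b_n\in X_n$ and the near-identity estimate (v)'', but (v) only controls the \emph{raw} map $\hat\theta_n$; the actual $\theta_n=\Ad(w_n)\circ\hat\theta_n$ satisfies $\|\theta_n(x)-\hat\theta_n(x)\|\le 2\|w_n-I\|$, and $\|w_n-I\|$ is \emph{not} uniformly bounded in $n$: one only has $\|w_n-u_{n-1}\|\le(740+665)\gamma$ and $\|u_n-u_{n-1}\|\le 1480\gamma$, so $\|u_{n-1}-I\|$ (hence $\|w_n-I\|$) can grow linearly in $n$ --- the $u_n$ converge only $*$-strongly, not in norm. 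So no estimate of the form $\|\theta_n(x)-x\|\le 9/10$ survives for large $n$, and your hedged alternative (adjoining elements that $\theta_{n-1}$ carries close to the targets) does not close the gap either, because $(3)$ and $(4)$ control $\theta_n$ against $\theta_{n-1}$ and $\Ad(u_n)$ only on the \emph{previously fixed} sets $X_{n-1}$, $Y_{n-1}$, not on elements newly adjoined at stage $n$.

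The mechanism the paper uses --- and the reason clause (9) is in the statement at all (you prove (9) but never use it) --- is this: at the start of stage $n$ take the unitary $z\in A$ with $\|z-u_{n-1}\|\le 75\gamma$ from $(9)_{n-1}$, choose $x_j\in B_1$ with $\|x_j-z^*a_jz\|\le\gamma$ (here $a_j$ runs over the dense subset of $A_1$; the $b_j$ in the displayed clause (6) is a typo for $a_j$, as the use in Proposition~\ref{4.3} --- proving $d(A,\theta(B))<1$ --- makes clear), and put these $x_j$ into $X_n$ \emph{before} invoking Lemma~\ref{4.1}. Then one estimates
\[
\|\theta_n(x_j)-a_j\|\le\|\hat\theta_n(x_j)-x_j\|+\|\Ad(w_n)(x_j)-\Ad(z)(x_j)\|+\|\Ad(z)(x_j)-a_j\|,
\]
where the first term is $\le 117\gamma$ by (v), the last is $\le\|x_j-z^*a_jz\|\le\gamma$, and the middle is $\le 2\|\tilde v_n-I\|+2\|w_n'-u_{n-1}\|+2\|u_{n-1}-z\|\le 2960\gamma$, for a total of about $3078\gamma<9/10$ \emph{uniformly in} $n$. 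The drift of $u_n$ is absorbed into the choice of witness $x_j\approx z^*a_jz$ rather than into a near-identity bound on $\theta_n$. Without this step the construction does not yield (6), and (6) is exactly what gives surjectivity of the limit map in Proposition~\ref{4.3}, so the omission is not cosmetic.
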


\begin{proof}
We prove this lemma by using the induction.
Denote by (a)$_n$ the condition (a) for $n$.
Let $X_0=Y_0=Z_0=\emptyset$, $\delta_0=1/2$, $u_0=I$ and 
$\theta_0\colon B\to A$ be any $C$-fixed surjective $*$-isomorphism by Proposition \ref{3.2}.
Then,
conditions (1)$_0$, (3)$_0$, (5)$_0$ and (6)$_0$ do not be defined.
Conditions (2)$_0$ and (4)$_0$ are clear, since $X_0=Y_0=\emptyset$.
In conditions (7)$_0$, (8)$_0$ and  (9)$_0$,
by taking $w=I$, $\tilde{v}=v^*$ and $z=I$,
that conditions are satisfied.

Assume the statement holds for $n$;
we will prove it for $n+1$.
By (9)$_n$,
there exists a unitary $z\in A$ such that $\| z - u_n\| \le 75\gamma$.
For $1\le j\le n+1$,
there exists $x_j\in B_1$ such that $\| x_j- z^* a_j z\| \le \gamma$.
Define $X_{n+1}:=X_n\cup Y_n\cup \{b_n\}\cup \{x_1,\ldots,x_{n+1}\}$.
In Lemma \ref{4.1},
let $X=X_{n+1}$, $Z_A=Z_n$, $\varepsilon=\delta_n/6$ and $\mu=2^{-(n+2)}$
and so there exist $Y_{n+1}\subseteq B_1$, $Z_{n+1}\subseteq A_1$, $\delta_{n+1}>0$, 
$u\in C'\cap C^*(A,B)$ and $\theta\colon B\to A$ 
with conditions (i)-(vii) of that lemma.
By Lemma \ref{4.1} (i), $\delta_{n+1}<\varepsilon=\delta_n/6<2^{-(n+1)}/3$.
By Lemma \ref{4.1} (ii), $X_{n+1}\subseteq_{2^{-(n+1)}/3} Y_{n+1}$.
Thus, condition (2)$_{n+1}$ holds.

By applying $\theta$ to condition (7)$_n$,
we may find a unitary $w\in C'\cap A$ such that 
\begin{align}\label{4.2.1}
\Ad (w) \circ \theta\approx_{Y_n,\delta_n/2}\theta_n
\end{align}
and 
$\| w-u_n\|\le665\gamma$.

Fix $y\in Y_n$.
Since $Y_n\subseteq X_{n+1}\subseteq_{\delta_n/6}Y_{n+1}$,
there exists $\tilde{y}\in Y_{n+1}$ such that $\| y-\tilde{y}\|\le \delta_n/6$.
Then, by Lemma \ref{4.1} (iv),
\begin{align*}
\| \Ad (u)(y)- \theta(y)\| 
&\le \| \Ad (u) (y)-\Ad (u) (\tilde{y})\|+\|\Ad (u) (\tilde{y})-\theta(\tilde{y})\|+\|\theta(\tilde{y})-\theta(y)\| \\
&\le \frac{\delta_n}{6}+\delta_{n+1}+\frac{\delta_n}{6}\le \frac{\delta_n}{2}. 
\end{align*}
This and (\ref{4.2.1}) give
\begin{align}\label{4.2.2}
\Ad(w u)\approx_{Y_n,\delta_n} \theta_n.
\end{align}
Moreover,
\begin{align}\label{4.2.3}
\begin{split}
\| w u - u_n\|
\le \|  w (u - I)  \| + \| w - u_n \| 
\le 75\gamma+665\gamma=740\gamma.
\end{split}
\end{align}
By (\ref{4.2.2}) and (\ref{4.2.3}), 
we can apply $w u$ and $\{\xi_1,\ldots,\xi_{n+1}\}$ to condition (8)$_n$.
Hence, 
there exists a unitary $\tilde{v}\in C'\cap A$ such that 
\begin{align}\label{4.2.4}
\mathrm{Ad}(\tilde{v} w u) \approx_{X_n, 2^{-(n+1)}} \theta_n,
\end{align} 
$\| \tilde{v}- I\| \le 740\gamma$ and
\begin{align}\label{4.2.5}
\| (\tilde{v}w u-u_n)\xi_j \| < \frac{1}{2^{n+1}} \ \ \mathrm{and} \ \ \| (\tilde{v}w u-u_n)^* \xi_j\| <\frac{1}{2^{n+1}}, \ \ 1\le j\le n+1.
\end{align}

Define $\theta_{n+1}:=\Ad(\tilde{v}w)\circ \theta$ and $u_{n+1}:= \tilde{v}w u$.
By (\ref{4.2.5}), condition (5)$_{n+1}$ is trivial.
Since $\tilde{v}w\in A$ and 
\[
\| \tilde{v}w-u_{n+1}\|=\|\tilde{v}w-\tilde{v}w u\|=\|I-u\|\le 75\gamma,
\]
condition (9)$_{n+1}$ holds.

By Lemma \ref{4.1} (iv),
$\theta_{n+1}=\Ad(\tilde{v}w)\circ\theta\approx_{Y_{n+1},\delta_{n+1}}\Ad(\tilde{v}w u)=\Ad (u_{n+1})$.
Thus, condition (4)$_{n+1}$ is satisfied.

Fix $x\in X_n$.
Let $y\in Y_{n+1}$ satisfy $\| x- y\|\le 2^{-(n+1)}/3$.
Then, by (4)$_{n+1}$,
\begin{align*}
&\| \theta_{n+1}(x)- \Ad (u_{n+1}) (x) \| \\
&\le \| \theta_{n+1}(x) -\theta_{n+1}(y)\|+\|\theta_{n+1}(y)-\Ad (u_{n+1}) (y)\| 
 +\| \Ad (u_{n+1}) (y)- \Ad (u_{n+1}) (x) \| \\
&\le \frac{1}{3\cdot 2^{n+1}}+\delta_{n+1}+\frac{1}{3\cdot 2^{n+1}} < \frac{1}{2^{n+1}}.
\end{align*}
Therefore,
\begin{align*}
\theta_{n+1}\approx_{X_n,2^{-(n+1)}} \Ad (u_{n+1}).
\end{align*}
This and (\ref{4.2.4}) give $\theta_{n+1}\approx_{X_n,2^{-n}}\theta_n$.
Hence, condition (3)$_{n+1}$ holds.

For any $x\in A_1$,
\begin{align}\label{4.2.6}
\begin{split}
&\| \Ad(\tilde{v}w)(x)- \Ad (z)(x) \| \\
&\le \| \Ad(\tilde{v}w)(x)- \Ad (w) (x)\| + \| \Ad (w) (x)- \Ad (u_n) (x)\|  
 + \| \Ad (u_n) (x)-  \Ad (z) (x)\| \\
&\le 2\|\tilde{v}-I\|+ 2\| w-u_n\|+ 2\|u_n-z\| \\
&\le (1480+ 1330+ 150)\gamma=2960\gamma.
\end{split}
\end{align}
For $1\le j\le n+1$,
there exists $x_j\in X_{n+1}$ such that $\| x_j- z^* a_j z\| \le \gamma$
by the definition of $X_{n+1}$.
(\ref{4.2.6}) and Lemma \ref{4.1} (v) give
\begin{align*}
&\| \theta_{n+1}(x_j) - a_j\| \\
&\le \| \theta_{n+1}(x_j)-\Ad(\tilde{v}w)(x_j)\|+\|\Ad(\tilde{v}w)(x_j)-\Ad (z) (x_j)\| 
 +\|\Ad (z) (x_j)-a_j\| \\
&\le \| (\Ad(\tilde{v}w)\circ\theta)(x_j)-\Ad(\tilde{v}w)(x_j)\| + 2960\gamma + \| x_j - z^* b_j z\| \\
&\le \| \theta(x_j) - x_j\| + 2960\gamma+\gamma \\
&\le 3078\gamma <\frac{9}{10}.
\end{align*}
Therefore, condition (6)$_{n+1}$ is proved.

Let $\phi\colon B\to A$ be a $C$-fixed surjective $*$-isomorphism with $\phi^{-1}\approx_{Z_{n+1},365\gamma}\id_A$.
By Lemma \ref{4.1} (vi),
there exists a unitary $\tilde{w}\in C'\cap A$ such that 
\begin{align}\label{4.2.7}
\Ad (\tilde{w}) \circ \phi \approx_{Y_{n+1},\delta_{n+1}/2} \theta
\end{align}
and $\| \tilde{w}- u \| \le 665\gamma$.
For any $y\in Y_{n+1}$, by (\ref{4.2.7}),
\begin{align*}
\| (\Ad(\tilde{v}w \tilde{w})\circ \phi)(y) - \theta_{n+1}(y) \| 
=\| (\Ad (\tilde{w}) \circ \phi)(y) - \theta(y) \| \le \frac{\delta_{n+1}}{2}.
\end{align*}
Furthermore, we have
\begin{align*}
\| \tilde{v}w\tilde{w}-u_{n+1}\| =\| \tilde{v}w\tilde{w} - \tilde{v}w u \| =\| \tilde{w}-u\| \le 665 \gamma.
\end{align*}
Thus, $\tilde{v}v\tilde{w}$ satisfies  (7)$_{n+1}$.

It remains to prove condition (8)$_{n+1}$.
Let $S\subseteq H_1$ be a finite set and $v\in C'\cap C^*(A,B)$ be a unitary with 
$\| v-u_{n+1}\|\le 740\gamma$ and $\Ad (v) \approx_{Y_{n+1},\delta_{n+1}}\theta_{n+1}$.
Then, we have
\[
\| w^*\tilde{v}^*v - u\|=\| v-\tilde{v}w u\| =\| v- u_{n+1}\|\le 740\gamma
\]
and 
$\Ad(w^*\tilde{v}^*v)\approx_{Y_{n+1},\delta_{n+1}}
\Ad(w^*\tilde{v}^*)\circ\theta_{n+1} =\theta$.
Hence,
by applying  Lemma \ref{4.1} (vii) to 
$w^*\tilde{v}^*v$ and $S':=S\cup \{ w^*\tilde{v}^*\xi : \xi\in S\}$, 
there exists a unitary $v'\in C'\cap A$ such that
$\Ad(v' w^*\tilde{v}^*v)\approx_{X_{n+1},\delta_n/6}\theta$, $\| v'-I\|\le 740\gamma$ and
\begin{align*}
\| (v' w^*\tilde{v}^*v- u) \xi'\|<\frac{1}{2^{n+2}} \ \ \mathrm{and} \ \ 
\| (v' w^*\tilde{v}^*v- u)^* \xi'\|<\frac{1}{2^{n+2}}, \ \ \xi'\in S'.
\end{align*}
For any $x\in X_n$, we have
\begin{align*}
\| \Ad(\tilde{v}w v' w^* \tilde{v}^* v)(x)- \theta_{n+1}(x) \| 
=\| \Ad( v' w^* \tilde{v}^* v)(x) - \theta(x) \|\le \frac{\delta_n}{6}<\frac{1}{2^{n+2}}.
\end{align*}
and
\begin{align*}
\| \tilde{v}w v' w^* \tilde{v}^* -I\| =\| v'-I \| \le 740\gamma.
\end{align*}
For $\xi\in S$, we have
\begin{align*}
\| ( \tilde{v}w v' w^* \tilde{v}^* v - u_{n+1})\xi \| 
= \| (v' w^* \tilde{v}^* v - u)\xi\| < \frac{1}{2^{n+2}}.
\end{align*}
and
\begin{align*}
\| (\tilde{v} w v' w^* \tilde{v}^* v - u_{n+1})^*\xi \|
=\| (v' w^*\tilde{v} v - u )^* w^* \tilde{v}^* \xi\| <\frac{1}{2^{n+2}}.
\end{align*}
Therefore, $\tilde{v}w v' w^*\tilde{v}^*$ satisfies $(8)_{n+1}$, and the lemma follows.
\end{proof}

\begin{prop}\label{4.3}
Let $C\subseteq D$ be a unital inclusion of $\mathrm{C}^*$-algebras 
acting non-degenerately on a separable Hilbert space $H$.
Let $A$ and $B$ be separable intermediate $\C^*$-subalgebras for $C\subseteq D$ with 
a conditional expectation $E\colon D\to B$.
Suppose that 
$C\subseteq A$ is crossed product-like by a discrete amenable group
and $C'\cap C^*(A,B)\subseteq \overline{C'\cap A}^{\w}$.
If $\dab<10^{-5}$,
then there exists a unitary $u\in C'\cap (A\cup B)''$ such that $u A u^* = B$.
\end{prop}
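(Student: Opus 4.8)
The plan is to run an intertwining argument of the type used in \propref{3.2}, fed by the sequences produced in \lemref{4.2}, and then to recognise the limiting $*$-isomorphism as conjugation by a single unitary.

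First I would fix dense sequences $\{a_n\}\subseteq A_1$, $\{b_n\}\subseteq B_1$, $\{\xi_n\}\subseteq H_1$ and a constant $\gamma$ with $\dab<\gamma<10^{-5}$, and apply \lemref{4.2} to obtain finite sets $X_n,Y_n\subseteq B_1$, $Z_n\subseteq A_1$, positive constants $\delta_n$, unitaries $u_n\in C'\cap C^*(A,B)$, and $C$-fixed surjective $*$-isomorphisms $\theta_n\colon B\to A$ satisfying conditions (1)--(9). From condition (5), the sequences $(u_n\xi_j)_n$ and $(u_n^*\xi_j)_n$ are norm-Cauchy for each $j$; since $\|u_n\|=1$ and $\{\xi_j\}$ is dense in $H_1$, the nets $(u_n)$ and $(u_n^*)$ therefore converge in the strong operator topology to mutually adjoint contractions $u$ and $u^*$, and passing to strong limits in $u_n^*u_n=u_nu_n^*=I$ shows $u$ is a unitary. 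Since $C'$ and $(A\cup B)''$ are weakly closed and contain every $u_n$, we get $u\in C'\cap(A\cup B)''$. Conditions (1)--(3) then let me imitate the proof of \propref{3.2}: the $\theta_n$ converge in the point-norm topology to a $C$-fixed $*$-homomorphism $\theta\colon B\to A$, the relevant Cauchy estimate being $\|\theta_{n+1}(b_k)-\theta_n(b_k)\|\le 2^{-n}$ for large $n$, from (3).

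The heart of the proof is to identify $\theta$ with $\mathrm{Ad}(u)|_B$. Fixing $b_k$ and $n\ge k$, condition (2) gives $y\in Y_n$ with $\|b_k-y\|\le 2^{-n}/3$, and then (4) together with $\delta_n<2^{-n}$ yields $\|\theta_n(b_k)-u_nb_ku_n^*\|<\tfrac{5}{3}\,2^{-n}$; hence $u_nb_ku_n^*\to\theta(b_k)$ in norm. On the other hand $u_n\to u$ and $u_n^*\to u^*$ strongly, so $u_nb_ku_n^*\to ub_ku^*$ strongly, and the two limits must coincide: $\theta(b_k)=ub_ku^*$. By density $\theta=\mathrm{Ad}(u)|_B$, so $uBu^*=\theta(B)\subseteq A$ and $\theta$ is injective. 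For surjectivity I would invoke condition (6): given $a_j$, choose $n\ge j$ with $2^{-(n-1)}$ small, pick $x\in X_n$ with $\|\theta_n(x)-a_j\|\le 9/10$, and combine with $\|\theta(x)-\theta_n(x)\|\le 2^{-(n-1)}$ (again from (3)) to obtain $\|\theta(x)-a_j\|<1$; letting $a_j$ range over the dense set gives $A\subset_1\theta(B)$, so $\theta(B)=A$ by \propref{surjective}. Therefore $uBu^*=A$, i.e.\ $u^*Au=B$, and $u^*\in C'\cap(A\cup B)''$ is the required unitary.

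The step I expect to be the real obstacle is the topology clash in the previous paragraph: the $u_n$ converge only strongly, not in norm, so the algebraic identity $\mathrm{Ad}(u)(B)=A$ cannot simply be read off by passing to a limit. It has to be extracted by noting that the particular conjugates $u_nb_ku_n^*$ \emph{do} converge in norm --- because they stay uniformly close to the norm-Cauchy sequences $\theta_n(b_k)$ --- and then matching that norm limit against the strong limit $ub_ku^*$. All the numerology in \lemref{4.2} (the powers $2^{-n}$, the tolerances $\delta_n$, the fixed constants such as $9/10$, $365\gamma$, $740\gamma$) is arranged so that these competing estimates tend to zero, or stay bounded away from $1$, exactly as needed.
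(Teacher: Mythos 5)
Your proposal is correct and follows essentially the same route as the paper: apply \lemref{4.2}, obtain $\theta$ as a point-norm limit of the $\theta_n$ via condition (3), get surjectivity from condition (6) together with \propref{surjective}, obtain $u$ as the $*$-strong limit of the $u_n$ from condition (5), and identify $\theta$ with $\mathrm{Ad}(u)$ via conditions (2) and (4). The only difference is that you spell out the reconciliation of the norm limit of $u_nb_ku_n^*$ with its strong limit $ub_ku^*$, a step the paper passes over in one line.
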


\begin{proof}
Let $\{ a_n\}_{n=1}^{\infty}$, $\{ b_n\}_{n=1}^{\infty}$ and $\{ \xi_n\}_{n=0}^{\infty}$ be dense subsets in $A_1$, $B_1$ and $H_1$, respectively.
In Lemma \ref{4.2},
we may choose $\{X_n\}_{n=0}^{\infty}$, $\{Y_n\}_{n=0}^{\infty}$, $\{Z_n\}_{n=0}^{\infty}$,
$\{\delta_n\}_{n=0}^{\infty}$, $\{u_n\}_{n=0}^{\infty}$ and
$\{\theta_n\}_{n=0}^{\infty}$ with (1)--(8).
For any $b_k$ and $\varepsilon>0$,
there is $N\in\mathbb{N}$
such that $2^{-(N-1)}<\varepsilon$ and $k< N$.
For $m \ge n \ge N$,
\[
\| \theta_m(b_k)-\theta_n(b_k)\| \le \sum_{j=n}^{m-1} \| \theta_{j+1}(b_k) - \theta_j(b_k) \| \le\sum_{j=n}^{m-1} \frac{1}{2^j} <\frac{1}{2^{N-1}} <\varepsilon.
\]
Thus, for any $b_k$, $\{\theta_n(b_k)\}_{n=0}^{\infty}$ is a Cauchy sequence. 
Since $\| \theta_n\| \le 1$, the sequence $\{\theta_n\}$ converges to a $C$-fixed $*$-homomorphism 
$\theta\colon B\to A$ in the point-norm topology.

For any $a_j$ and $n\ge j$, there is $x\in X_n$ such that $\| \theta_n(x) - a_j\| \le 9/10$.
\begin{align*}
\| a_j - \theta(x) \| 
&\le \| a_j - \theta_n(x) \| + \sum_{m=n}^{\infty} \| \theta_{m+1}(x) - \theta_m(x) \| \\
&\le \frac{9}{10}+ \sum_{m=n}^{\infty}\frac{1}{2^m} \le \frac{9}{10}+\frac{1}{2^{n-1}}.
\end{align*}
Since $n\ge j$ was arbitrary and $\{a_n\}$ is dense set in $A_1$, we have $d(A,\theta(B))<1$.
Therefore, $\theta$ is surjective by Corollary \ref{surjective}. 

By Lemma \ref{4.2} (5), $\{u_n\}$ converges to a unitary $u\in C'\cap (A\cup B)''$ in the $*$-strong topology.
Moreover, by Lemma \ref{4.2} (4), we have $\theta=\Ad (u)$.
Therefore, $A=u B u^*$, since $\theta$ is surjective.
\end{proof}

Finally, we show Theorem D by using Proposition \ref{4.3} and Corollary \ref{2.4}.

\begin{thm}\label{main}
Let $C\subseteq D$ be a unital inclusion of $\mathrm{C}^*$-algebras 
acting on a separable Hilbert space $H$.
Let $A$ and $B$ be separable intermediate $\C^*$-subalgebras for $C\subseteq D$ with 
a conditional expectation $E\colon D\to B$.
Suppose that $C\subseteq A$ is crossed product-like by a discrete amenable group
and $C'\cap A$ is weakly dense in $C'\cap \overline{A}^{\w}$.
If $\dab<10^{-7}$,
then there exists a unitary $u\in C'\cap (A\cup B)''$ such that $u A u^* = B$.
\end{thm}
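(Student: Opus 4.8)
The plan is to feed Corollary~\ref{2.4} into Proposition~\ref{4.3}. Put $\gamma:=\dab<10^{-7}$, and let $\tilde\pi\colon D^{**}\to\mathbb{B}(H)$ be the normal extension of the identity representation of $D$ on $H$, so that $\tilde\pi(A^{**})=\overline{A}^{\w}$, $\tilde\pi(B^{**})=\overline{B}^{\w}$ and $\tilde\pi(C^{**})=\overline{C}^{\w}$. Corollary~\ref{2.4} supplies a unitary $u_0\in(C^{**})'\cap W^*(A^{**},B^{**})$ with $u_0A^{**}u_0^*=B^{**}$ and $\|u_0-I\|\le 2(8+\sqrt{2})\gamma$; applying $\tilde\pi$ produces a unitary $v:=\tilde\pi(u_0)$ with $v\in(\overline{C}^{\w})'\cap(A\cup B)''\subseteq C'\cap(A\cup B)''$, $v\overline{A}^{\w}v^*=\overline{B}^{\w}$ and $\|v-I\|\le 2(8+\sqrt{2})\gamma$.

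Next I would replace $A$ by $A_1:=vAv^*$ and apply Proposition~\ref{4.3} to the pair $(A_1,B)$. Since $v\in C'$ we have $C=vCv^*\subseteq A_1$; moreover, if $C\subseteq A$ is crossed product-like by the discrete amenable group $U$, then $C\subseteq A_1$ is crossed product-like by the discrete amenable group $vUv^*$. Also $\overline{A_1}^{\w}=v\overline{A}^{\w}v^*=\overline{B}^{\w}$ and $d(A_1,B)\le 2\|v-I\|+\gamma<\bigl(4(8+\sqrt{2})+1\bigr)\gamma<10^{-5}$. Since $A_1$ and $B$ now have the same weak closure, $C^*(A_1,B)\subseteq\overline{B}^{\w}$, so weak density of $C'\cap A$ in $C'\cap\overline{A}^{\w}$, together with $v\in C'$, yields
\[
C'\cap C^*(A_1,B)\ \subseteq\ C'\cap\overline{B}^{\w}\ =\ v\bigl(C'\cap\overline{A}^{\w}\bigr)v^*\ =\ v\,\overline{C'\cap A}^{\w}\,v^*\ =\ \overline{C'\cap A_1}^{\w};
\]
that is, the standing hypothesis of Proposition~\ref{4.3} holds for $(A_1,B)$. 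Granting, in addition, a conditional expectation onto $B$ for this configuration (discussed below), Proposition~\ref{4.3} gives a unitary $w\in C'\cap(A_1\cup B)''=C'\cap(A\cup B)''$ with $wA_1w^*=B$, and then $u:=wv\in C'\cap(A\cup B)''$ satisfies $uAu^*=w(vAv^*)w^*=wA_1w^*=B$, as required.

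The delicate step — and the one I expect to be the main obstacle — is supplying the conditional expectation onto $B$ for the pair $(A_1,B)$. Conjugation by $v$ moves $A_1$ outside $D$ (indeed $v$ lies in $\overline{D}^{\w}$, not in $D$), so the given $E\colon D\to B$ does not restrict to the relevant ambient; one must transport $E$ along $\Ad(v)$, working inside $C^*(A_1,B)\subseteq\overline{B}^{\w}$ and with the normal extension $\overline{E}\colon\overline{D}^{\w}\to\overline{B}^{\w}$, and then verify that the resulting expectation, which a priori only lands in $\overline{B}^{\w}$, actually has range in the C$^*$-algebra $B$. This is the crux, and it is where one uses the smallness of $\|v-I\|$ (and, as in Section~5 of \cite{CSSWW}, the $C$-fixed $*$-isomorphism $A\cong B$ coming from Theorem~\ref{3.3}). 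Everything else — the weak-closure bookkeeping of the second paragraph, the distance estimate, and the final composition $u=wv$ — is routine once $v$ is in hand.
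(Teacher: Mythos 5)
Your proposal follows the paper's own route step for step: Corollary~\ref{2.4} produces $u_0$, the normal extension $\pi$ of $\id_D$ carries it to a unitary $v=\pi(u_0)\in C'\cap(A\cup B)''$ with $v\overline{A}^{\w}v^*=\overline{B}^{\w}$, one checks $d(vAv^*,B)\le 2\|v-I\|+\dab<39\gamma<10^{-5}$ and $C'\cap C^*(vAv^*,B)\subseteq \overline{C'\cap vAv^*}^{\w}$ exactly as you do, and Proposition~\ref{4.3} applied to the pair $(vAv^*,B)$ yields $w$ with $u=wv$. The only cosmetic difference is that the paper first cuts down to $K=\mathrm{ran}(e_D)$ and pads the final unitary by $I-e_D$ to handle a possibly degenerate action of $D$ on $H$.

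Concerning the step you single out as the crux: you are right that Proposition~\ref{4.3} as stated demands a conditional expectation onto $B$ from an ambient algebra containing $\tilde A:=vAv^*$, and that $\Ad(v)$ moves $A$ outside $D$, so the given $E$ does not literally supply one. You should note, however, that the paper's proof passes over this point in silence, simply invoking Proposition~\ref{4.3} for $(\tilde A,B)$; and the repair is lighter than your sketch suggests. Tracing through Lemmas~\ref{4.1} and~\ref{4.2}, the expectation enters the proof of Proposition~\ref{4.3} only via Proposition~\ref{3.2} and Theorem~\ref{3.3}, i.e.\ only to manufacture $C$-fixed surjective $*$-isomorphisms between the two algebras that are pointwise close to the identity on prescribed finite sets. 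For $(\tilde A,B)$ these can be obtained without any transported expectation: if $\alpha\colon A\to B$ is the isomorphism from Theorem~\ref{3.3} for the original pair, then $\alpha\circ\Ad(v^*)\colon\tilde A\to B$ is again $C$-fixed (since $v\in C'$) and obeys the same finite-set estimates up to an additional $2\|v-I\|\le 38\gamma$, which the constants in Lemma~\ref{4.1} absorb comfortably. So there is no need to verify that a transported expectation lands in the C$^*$-algebra $B$ rather than $\overline{B}^{\w}$; that part of your plan is a detour. With this substitution your argument closes, and it is then identical in substance to the paper's.
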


\begin{proof}
Let $\dab<\gamma<10^{-7}$.
By Corollary \ref{2.4},
there exists a unitary $u_0\in (C^{**})'\cap W^*(A^{**}, B^{**})$ such that
$u_0 A^{**} u_0^* = B^{**}$ and $\| u_0-I\| \le 19\gamma$.

Let $e_D$ be the support projection of $D$ and define $K:=\mathrm{ran}(e_D) \subseteq H$.
Now restrict $A,B,C$ and $D$ to $K$.
By the universal property, there exists a unique normal representation $\pi\colon D^{**}\to \mathbb{B}(K)$ such that 
$\pi|_D=\id_D$ and $\pi(D^{**})=D''$.

Define $\tilde{A}:= \pi(u_0) A \pi(u_0^*)\subseteq \mathbb{B}(K)$,
then 
$d(\tilde{A},B)\le 2\|u_0-I\| + \dab<39\gamma<10^{-5}$.
Since $\tilde{A}'' = \pi(u_0) \pi(A^{**}) \pi(u_0^*)=\pi(B^{**}) = B''$
and $C'\cap A$ is weakly dense in $C'\cap \overline{A}^{\w}$,
\[
C'\cap C^*(\tilde{A},B)\subseteq C'\cap \tilde{A}'' = \pi(u_0) ( C'\cap A'')\pi(u_0)^* = \pi(u_0) (\overline{C'\cap A}^{\w}) \pi(u_0)^*
=\overline{C'\cap \tilde{A}}^{\w}.
\]
Therefore,
there exists a unitary $u_1\in C'\cap B''\subseteq \mathbb{B}(K)$ such that $u_1 \tilde{A} u_1^* = B$ by Proposition \ref{4.3}.
Hence, the unitary $u$ is given by 
\[
u=u_1\pi(u_0)+(I_K-e_D) \in C'\cap (A\cup B)''\subseteq \mathbb{B}(H),
\]
so that $u A u^*=B$.
\end{proof}

\begin{exam}\upshape
Let $C=C(\mathbb{T})$ and $A=C(\mathbb{T})\rtimes \mathbb{Z}$ act on 
$H=\mathcal{L}^2(\mathbb{T})\otimes \ell^2(\mathbb{Z})$.
Then we have $C'\cap A=C$ and $C'\cap \overline{A}^{\w}=\mathcal{L}^{\infty}(\mathbb{T})$,
that is, $C'\cap A$ is weakly dense in $C'\cap \overline{A}^{\w}$.
\end{exam}

But we should be careful that $C'\cap \overline{A}^{\w}$ may not
be equal to the weak closure of $C'\cap A$ in general.

\begin{exam}\upshape
Let $\alpha$ be a free action of a group $G$ on a simple C$^*$-algebra $C$
and $A=C\rtimes_{\alpha}G$ act irreducibly on a Hilbert space $H$.
Then $C'\cap A=\mathbb{C}$ but $C'\cap \overline{A}^{\w}=C'\cap \mathbb{B}(H)$.
\end{exam}

\renewcommand{\defn}{{\bf Acknowledgment.}}

\begin{defn}
The author would like to thank Professor Yasuo Watatani for his encouragement and advise.
\end{defn}

\end{document}